\documentclass[10pt,reqno]{amsart}
\usepackage{amssymb,mathrsfs,graphicx,extpfeil,amsmath,bbm}
\usepackage{ifthen,latexsym,float,colortbl}
\usepackage[margin=1in]{geometry}
\usepackage{caption}
\usepackage{dsfont}
\usepackage{arydshln}
\usepackage{sidecap}
\usepackage{rotating,dsfont,stackengine}
\usepackage{hyperref}
\usepackage{enumerate}
\usepackage{xcolor}
\usepackage[scr=boondox]{mathalfa}
\usepackage{lineno}

\usepackage[normalem]{ulem}
\usepackage{soul}
\usepackage{cancel}

\usepackage{colortbl}
\definecolor{black}{rgb}{0.0, 0.0, 0.0}
\definecolor{red}{rgb}{1.0, 0.5, 0.5}
\provideboolean{shownotes} 
\setboolean{shownotes}{true} 
\newcommand{\margnote}[1]{
	\ifthenelse{\boolean{shownotes}}%
	{\marginpar{\raggedright\tiny\texttt{#1}}}%
	{}%
}
\newcommand{\hole}[1]{
	\ifthenelse{\boolean{shownotes}}%
	{\begin{center} \fbox{ \rule {.25cm}{0cm} \rule[-.1cm]{0cm}{.4cm}
				\parbox{.85\textwidth}{\begin{center} \texttt{#1}\end{center}} \rule
				{.25cm}{0cm}}\end{center}} {} }

\graphicspath{{pics/}}

\title[Incompressible Navier--Stokes--Fourier limit from quantum Fokker--Planck]{Incompressible Navier--Stokes--Fourier limit from a nonlinear quantum Fokker--Planck equation}

 \author[Choi]{Young-Pil Choi}
 \address[Young-Pil Choi]{\newline Department of Mathematics,\newline
 Yonsei University, 50 Yonsei-Ro, Seodaemun-Gu, Seoul 03722, Republic of Korea}
 \email{ypchoi@yonsei.ac.kr}

\author[Hwang]{Byung-Hoon Hwang}
\address[Byung-Hoon Hwang]{\newline Department of Mathematics Education,\newline
	Sangmyung University, 20 Hongjimun 2-gil, Jongno-Gu, Seoul 03016, Republic of Korea}
\email{bhhwang@smu.ac.kr}

 \author[Hyun]{Ju-Hwan Hyun}
 \address[Ju-Hwan Hyun]{\newline Department of Mathematics,\newline
 Yonsei University, 50 Yonsei-Ro, Seodaemun-Gu, Seoul 03722, Republic of Korea}
 \email{jhhyun@yonsei.ac.kr}

\makeatother

\numberwithin{equation}{section}

\newtheorem{theorem}{Theorem}[section]
\newtheorem{lemma}{Lemma}[section]
\newtheorem{corollary}{Corollary}[section]
\newtheorem{proposition}{Proposition}[section]
\newtheorem{remark}{Remark}[section]

\makeatletter
\def\@subsubsection#1{\par
  \addpenalty\@secpenalty
  \vskip 1.5ex plus 1ex minus .2ex
  \refstepcounter{subsubsection}
  \addcontentsline{toc}{subsubsection}{\protect\numberline{\thesubsubsection}#1}
  \noindent{\normalfont\bfseries\thesubsubsection\quad#1\par\nobreak}
  \vskip 0.5ex \@afterheading}
\makeatother

\newcommand{\lt}{\left}
\newcommand{\rt}{\right}
\newcommand{\lal}{\langle}
\newcommand{\ral}{\rangle}

\newcommand{\mP}{\mathbb{P}}

\newcommand{\mR}{\mathbb{R}}

\newcommand{\mcF}{\mathcal{F}}

\newcommand{\vrh}{\varrho}
\newcommand{\vth}{\vartheta}

\newcommand{\mfu}{\mathfrak{u}}

\newcommand{\intr}{\int_{\mR^3}}

\newcommand{\R}{\mathbb R}

\newcommand{\bq}{\begin{equation}}
\newcommand{\eq}{\end{equation}}

\newcommand{\pa}{\partial}

\newcommand{\intrr}{\iint_{\R^3 \times \R^3}}

\newcommand{\calA}{\mathcal A}

\newcommand{\calD}{\mathcal D}
\newcommand{\calE}{\mathcal E}
\newcommand{\calF}{\mathcal F}

\newcommand{\calI}{\mathcal I}
\newcommand{\calJ}{\mathcal J}

\newcommand{\calN}{\mathcal N}

\newcommand{\calR}{\mathcal R}

\newcommand{\rdx}{\textnormal{d}x}

\newcommand{\rdt}{\textnormal{d}t}
\newcommand{\rdp}{\textnormal{d}p}

\newcommand{\ds}{\textnormal{d}s}

\newcommand{\tr}{{\rm tr}}

\begin{document}

\allowdisplaybreaks

\date{\today}

\keywords{Nonlinear quantum Fokker--Planck equation, incompressible Navier--Stokes--Fourier limit, microscopic auxiliary equations, acoustic waves}

\begin{abstract}
We derive the incompressible Navier--Stokes--Fourier limit from a nonlinear quantum Fokker--Planck equation with Bose--Einstein or Fermi--Dirac statistics.  The model has a self-consistent collision structure, with the local density acting as the collision frequency and the bulk velocity and temperature determined by nonlinear quantum-weighted moments of the distribution. We work near a global quantum equilibrium under the diffusive scaling and keep the quantum parameter fixed. Uniform estimates with respect to the Knudsen number yield strong microscopic relaxation and identify the limiting infinitesimal quantum equilibrium. Using the local conservation laws, we prove the incompressibility condition, the Boussinesq relation, and strong compactness of the divergence-free velocity component and a quantum-adapted thermal mode, while the acoustic modes vanish locally by a dispersive estimate. The limiting viscous stress tensor and heat flux are identified by solving auxiliary equations for the linearized quantum Fokker--Planck operator and by expanding the local quantum equilibrium manifold. The resulting incompressible Navier--Stokes--Fourier system retains the effect of quantum statistics through its normalization constants and transport coefficients.
\end{abstract}

\maketitle \centerline{\date}

\tableofcontents
%
%
%
%
%
%
%
%
%
%

\section{Introduction}

Kinetic equations describe nonequilibrium systems at the mesoscopic level and provide a bridge between particle dynamics and macroscopic continuum models; see, for instance, \cite{CIP94,Vil02}. Their hydrodynamic and asymptotic regimes have been systematically investigated in the context of the Boltzmann equation; see, for instance, \cite{BGL91,BGL93,SR09}. Fokker--Planck type equations form an important class of kinetic models in which velocity diffusion and friction drive relaxation toward equilibrium. They are also connected to collisional kinetic theory through diffusion approximations and grazing-collision mechanisms; see \cite{DR01,Gou97,Vil98}. In this sense, they retain a number of dissipative features of Boltzmann and Landau type equations while remaining more tractable from the analytical point of view.

Quantum kinetic equations incorporate the effect of particle statistics through nonlinear occupation factors. In the Bose--Einstein and Fermi--Dirac settings, these factors encode Bose enhancement and Pauli exclusion and lead to collision terms involving expressions of the form $f(1+\hbar\kappa f)$, with $\kappa=1$ for bosons and $\kappa=-1$ for fermions; see, for instance, \cite{Kan95}. The parameter $\hbar>0$ measures the strength of the quantum correction. Related quantum Fokker--Planck equations on a classical kinetic phase space have been studied in \cite{LZ15,NS07}.

We consider the nonlinear quantum Fokker--Planck equation introduced in \cite{CHHpre}:
\bq\label{eq:nqfp}
\pa_t f+p\cdot\nabla_x f = \rho_f\nabla_p\cdot\lt\{\Theta_f\nabla_p f+(p-u_f)f(1+\hbar\kappa f)\rt\}.
\eq
This model can be viewed as a Fokker--Planck type reduction of the quantum Landau operator in the Maxwellian-molecule case under a radial symmetry ansatz around the quantum-weighted bulk velocity; see \cite{CHHpre}.
Here $f=f(t,x,p)$ is defined on $\R_+\times\R^3\times\R^3$, and the macroscopic quantities are given by
\bq\label{eq:macro-fields}
\rho_f:=\intr f\,\rdp,\quad
u_f:=\frac{\intr pf(1+\hbar\kappa f)\,\rdp}{\intr f(1+\hbar\kappa f)\,\rdp},\quad
\Theta_f:=\frac{\intr (|p|^2-u_f\cdot p)f(1+\hbar\kappa f)\,\rdp}{3\intr f\,\rdp}.
\eq

The distinctive feature of \eqref{eq:nqfp} is that the collision frequency, bulk velocity, and temperature are determined self-consistently by the distribution function. This structure is compatible with the conservation of mass, momentum, and kinetic energy, and it gives rise to a quantum entropy dissipation mechanism. The perturbative well-posedness theory near global quantum equilibria was established in \cite{CHHpre}. The purpose of the present paper is to derive the incompressible Navier--Stokes--Fourier system from \eqref{eq:nqfp} in a fixed-quantum-parameter hydrodynamic regime.

%
%
%
%
%
%
%
%
%
%

\subsection{Main result}

We study the incompressible hydrodynamic limit of \eqref{eq:nqfp} under the diffusive scaling
\bq\label{eq:scaled-nqfp}
\epsilon\pa_t f^\epsilon+p\cdot\nabla_x f^\epsilon = \frac{\rho^\epsilon}{\epsilon}
\nabla_p\cdot\lt\{\Theta^\epsilon\nabla_p f^\epsilon+(p-u^\epsilon)f^\epsilon(1+\hbar\kappa f^\epsilon)\rt\},
\eq
where $0<\epsilon\le1$ is the Knudsen number. The macroscopic quantities $\rho^\epsilon$, $u^\epsilon$, and $\Theta^\epsilon$ are defined by \eqref{eq:macro-fields} with $f$ replaced by $f^\epsilon$. Throughout the paper, $\hbar>0$ is fixed. Thus the limit considered below is $\epsilon\to0$ with the quantum parameter fixed; in particular, we do not approximate the quantum equilibrium by a classical Maxwellian.

Let
\[
\calF_\hbar(p):=\lt(e^{\frac{|p|^2}{2}+\theta_*}-\hbar\kappa\rt)^{-1}
\]
be a global quantum equilibrium, where $\theta_*>0$ satisfies
\bq\label{eq:h-cond}
\hbar e^{-\theta_*}<1.
\eq
We set
\[
\mu_\hbar:=\calF_\hbar+\hbar\kappa\calF_\hbar^2,\quad \eta_\hbar:=1+2\hbar\kappa\calF_\hbar.
\]
The condition \eqref{eq:h-cond} guarantees that $\calF_\hbar$, $\mu_\hbar$, and $\eta_\hbar$ are positive and decay exponentially in $p$, together with their velocity derivatives.

We consider perturbations of $\calF_\hbar$ of the form
\bq\label{eq:pert-ansatz}
f^\epsilon=\calF_\hbar+\epsilon\sqrt{\mu_\hbar}\,g^\epsilon.
\eq
Substituting \eqref{eq:pert-ansatz} into \eqref{eq:scaled-nqfp}, we obtain
\bq\label{eq:pert-eq}
\epsilon\pa_t g^\epsilon+p\cdot\nabla_x g^\epsilon = \frac{1}{\epsilon}Lg^\epsilon+\Gamma^\epsilon(g^\epsilon),
\eq
where $L$ is the linearized collision operator around $\calF_\hbar$ and $\Gamma^\epsilon$ is the scaled nonlinear remainder. Their precise expressions are given in Section \ref{sec:scaled-uniform}. We denote by $P$ the $L^2_p$-orthogonal projection onto
\[
\calN:=\ker L = \operatorname{span}\lt\{ \sqrt{\mu_\hbar},\,p_1\sqrt{\mu_\hbar},\,p_2\sqrt{\mu_\hbar},\,p_3\sqrt{\mu_\hbar},\,|p|^2\sqrt{\mu_\hbar} \rt\}.
\]

For the statement of our main result, we introduce
\bq\label{eq:mom-h}
m_0:=\intr \mu_\hbar\,\rdp,\quad m_2:=\frac13\intr |p|^2\mu_\hbar\,\rdp,\quad \phi:=\frac{3m_2}{m_0},
\eq
and
\[
\psi:=\frac{1}{3m_2}\intr |p|^4\mu_\hbar\,\rdp,\quad \beta:=\frac{\psi}{\phi}-1.
\]
We also define the linear macroscopic moments of $g^\epsilon$ by
\[
\vrh^\epsilon:=\lal g^\epsilon,\sqrt{\mu_\hbar}\ral_{L^2_p},\quad
\mfu^\epsilon:=\lal g^\epsilon,p\sqrt{\mu_\hbar}\ral_{L^2_p},\quad
\vth^\epsilon:=\lt\lal g^\epsilon,\lt(\frac{|p|^2}{\phi}-1\rt)\sqrt{\mu_\hbar}\rt\ral_{L^2_p}.
\]
The quantum viscosity $\nu_\hbar>0$ and the quantum thermal diffusivity $\kappa_\hbar>0$ are defined in Section \ref{sec:NSF} through the microscopic auxiliary equations associated with the stress tensor and the heat flux.

\begin{theorem}\label{thm:NSF-limit}
Let $T>0$, $s\ge4$, $\kappa\in\{-1,1\}$, and $\hbar>0$. Assume that \eqref{eq:h-cond} holds. For each $0<\epsilon\le1$, let $g^\epsilon_0\in H^s(\R^3\times\R^3)$ satisfy
\[
\sup_{0<\epsilon\le1}\|g^\epsilon_0\|_{H^s_{x,p}}\le\delta_{\rm in},
\]
where $\delta_{\rm in}>0$ is sufficiently small. Assume that $f^\epsilon_0:=\calF_\hbar+\epsilon\sqrt{\mu_\hbar}\,g^\epsilon_0\ge0$ for almost every $(x,p)\in\R^3\times\R^3$. In the fermionic case $\kappa=-1$, assume additionally that $f^\epsilon_0\le\frac{1}{\hbar}$ for almost every $(x,p)\in\R^3\times\R^3$. Let $g^\epsilon\in C([0,\infty);H^s(\R^3\times\R^3))$ be the corresponding global solution to \eqref{eq:pert-eq}. 
Suppose that there exists $(\vrh_0,\mfu_0,\vth_0)\in H^{s-1}(\R^3)\times H^{s-1}(\R^3;\R^3)\times H^{s-1}(\R^3)$ such that
\[
(\vrh^\epsilon(0),\mfu^\epsilon(0),\vth^\epsilon(0))\to(\vrh_0,\mfu_0,\vth_0)\quad\text{strongly in }H^{s-1}(\R^3)\times H^{s-1}(\R^3;\R^3)\times H^{s-1}(\R^3)
\]
as $\epsilon\to0$. Then, up to a subsequence, there exist $\mfu\in L^\infty(0,T;H^s(\R^3;\R^3))$ and $\vth\in L^\infty(0,T;H^s(\R^3))$ such that
\[
g^\epsilon\overset{\star}{\rightharpoonup}g \quad\text{weakly-$\star$ in }L^\infty(0,T;H^s(\R^3\times\R^3)),
\]
where
\[
g(t,x,p)=\lt\{\frac{\mfu(t,x)\cdot p}{m_2}+\vth(t,x)\lt[\frac{1}{m_0\beta}\lt(\frac{|p|^2}{\phi}-1\rt)-\frac{1}{m_0}\rt]\rt\}\sqrt{\mu_\hbar(p)}.
\]
Moreover,
\[
(I-P)g^\epsilon\to0 \quad\text{strongly in }L^2(0,T;H^s(\R^3\times\R^3)).
\]
If $\vrh:=\lal g,\sqrt{\mu_\hbar}\ral_{L^2_p}$, then the limiting variables satisfy
\[
\vrh+\vth=0
\]
and solve the incompressible Navier--Stokes--Fourier system
\bq\label{eq:NSF-main}
\begin{cases}
\displaystyle \pa_t\mfu+\frac{1}{m_2}\mfu\cdot\nabla_x\mfu+\nabla_x\mathfrak p=\nu_\hbar\Delta_x\mfu,\\[2mm]
\nabla_x\cdot\mfu=0,\\[2mm]
\displaystyle \pa_t\vth+\frac{1}{m_2}\mfu\cdot\nabla_x\vth=\kappa_\hbar\Delta_x\vth.
\end{cases}
\eq
The initial data are given by
\[
\mfu|_{t=0}=\mP\mfu_0,\quad \vth|_{t=0}=\frac{\vth_0-\beta\vrh_0}{1+\beta},
\]
where $\mP$ denotes the Leray projection onto divergence-free vector fields, i.e., $\mP :=I-\nabla_x(-\Delta_x)^{-1}\nabla_x\cdot $. More precisely, if
\[
q^\epsilon:=\frac{\vth^\epsilon-\beta\vrh^\epsilon}{1+\beta},
\]
then, for every $0<\zeta<1$,
\[
\mP\mfu^\epsilon\to\mfu,\quad q^\epsilon\to\vth \quad \text{strongly in } C([0,T];H^{s-1-\zeta}_{\rm loc}(\R^3)).
\]
Furthermore,
\[
(I-\mP)\mfu^\epsilon\to0,\quad \vrh^\epsilon+\vth^\epsilon\to0 \quad \text{strongly in } L^2(0,T;H^{s-1-\zeta}_{\rm loc}(\R^3)).
\]
\end{theorem}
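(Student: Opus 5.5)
The plan follows the Bardos--Golse--Levermore moment method in the perturbative framework of Bardos--Ukai and Jiang--Xu--Zhao, adapted to the quantum weights $\mu_\hbar$, $\eta_\hbar$.

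\textbf{Step 1: uniform bounds.} From the global well-posedness theory of \cite{CHHpre}, rescaled to \eqref{eq:pert-eq}, I will extract an energy inequality that is uniform in $\epsilon$:
\[
\sup_{t\ge0}\|g^\epsilon(t)\|_{H^s_{x,p}}^2+\frac{c_0}{\epsilon^2}\int_0^\infty\|(I-P)g^\epsilon\|_{H^s_x L^2_{p,\sigma}}^2\,\rdt\le C\delta_{\rm in}^2 .
\]
This uses two facts. First, the coercivity $-\langle Lg,g\rangle\ge c_0\|(I-P)g\|_{\sigma}^2$. Second, a bound of the form $|\langle\Gamma^\epsilon(g),h\rangle|\lesssim\|g\|_{H^s}\|g\|_\sigma\|h\|_\sigma/\epsilon$, valid for $h$ microscopic. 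This bound holds because, after the expansion of $\rho^\epsilon,u^\epsilon,\Theta^\epsilon$ around $\calF_\hbar$, $\Gamma^\epsilon$ is $\epsilon^{-1}$ times a quadratic form in $\epsilon g^\epsilon$. The bound gives weak-$\star$ compactness of $g^\epsilon$ and the strong relaxation $(I-P)g^\epsilon=O(\epsilon)$ in $L^2_tH^s$.

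Consequently, any limit $g$ lies in $\calN$. Passing to the limit in $\epsilon\times$\eqref{eq:pert-eq}, we get $p\cdot\nabla_x g=\lim\epsilon^{-1}Lg^\epsilon\cdot\epsilon$ tested against $\calN$, that is, $P(p\cdot\nabla_x g)=0$. Using the moments $m_0,m_2,\psi$, this gives $\nabla_x\cdot\mfu=0$ and $\nabla_x(\text{pressure})=0$. The pressure is a combination of $\vrh$ and $\vth$, and with the present normalization its vanishing is exactly the Boussinesq relation $\vrh+\vth=0$, using $L^2$ decay in $x$. Rewriting $g=P g$ in terms of $(\vrh,\mfu,\vth)$ with $\vrh=-\vth$ and inverting the Gram matrix of the basis of $\calN$ then produces the stated formula for $g$.

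\textbf{Step 2: local conservation laws and compactness.} Testing \eqref{eq:pert-eq} against $\sqrt{\mu_\hbar}$, $p\sqrt{\mu_\hbar}$ and $(\frac{|p|^2}{\phi}-1)\sqrt{\mu_\hbar}$, I get exact balance laws, since $L$ and $\Gamma^\epsilon$ annihilate these moments by conservation:
\[
\epsilon\pa_t\vrh^\epsilon+\nabla\cdot\mfu^\epsilon=0,\qquad \epsilon\pa_t\mfu^\epsilon+\nabla(\cdots\vrh^\epsilon+\cdots\vth^\epsilon)+\nabla\cdot\langle(I-P)g^\epsilon,\hat A\rangle=0,
\]
together with the analogous law for the temperature. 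The combination $q^\epsilon$ is chosen precisely so that its fast $\epsilon^{-1}$ gradient term cancels. Hence $\pa_t\mP\mfu^\epsilon$ and $\pa_t q^\epsilon$ are bounded in $L^2_tH^{s-1}_x$, because the flux $\epsilon^{-1}\nabla\cdot\langle(I-P)g^\epsilon,\cdot\rangle$ is bounded by Step 1. Aubin--Lions together with the $L^\infty_tH^s$ bounds then gives the strong convergence in $C([0,T];H^{s-1-\zeta}_{\rm loc})$. The remaining acoustic pair, consisting of $(I-\mP)\mfu^\epsilon$ and the pressure combination $\vrh^\epsilon+\vth^\epsilon$, satisfies a wave system with speed $1/\epsilon$ and $O(1)$ sources. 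Strichartz-type dispersive estimates for the linear wave group give local decay in $L^2_tH^{s-1-\zeta}_{\rm loc}$ as $\epsilon\to0$.

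\textbf{Step 3: identification of the fluxes (the main obstacle).} Write the fluxes as $\epsilon^{-1}\langle(I-P)g^\epsilon,\hat A\rangle$ with $\hat A=(p\otimes p-\frac{|p|^2}{3}I)\sqrt{\mu_\hbar}$, and analogously $\hat B$ for the heat flux. I first solve the auxiliary equations $L\tilde A=\hat A$ and $L\tilde B=\hat B$ in $\calN^\perp$; these are solvable by the Fredholm alternative, and $\nu_\hbar,\kappa_\hbar$ are defined through $\langle\tilde A,\hat A\rangle$ and $\langle\tilde B,\hat B\rangle$. Then, by self-adjointness,
\[
\epsilon^{-1}\langle(I-P)g^\epsilon,\hat A\rangle=\langle\epsilon^{-1}Lg^\epsilon,\tilde A\rangle=\langle\epsilon\pa_t g^\epsilon+p\cdot\nabla g^\epsilon-\Gamma^\epsilon(g^\epsilon),\tilde A\rangle .
\]
On the right-hand side, the $\epsilon\pa_t$ term vanishes distributionally and $p\cdot\nabla g^\epsilon$ produces the viscous Laplacian. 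The delicate term is the nonlinear one. Because $\rho$, $u$ and $\Theta$ enter nonlinearly through quantum-weighted moments, $\langle\Gamma^\epsilon(g^\epsilon),\tilde A\rangle$ must be expanded along the local quantum equilibrium manifold to second order. One has to show that its quadratic part on $\calN$ equals $\frac{1}{m_2}\,\mfu\otimes\mfu$ up to a gradient (respectively $\mfu\vth$ for the heat flux). The nonlinear pieces are products, so I pass to the limit using the strong convergences from Step 2 and discard the acoustic contributions via their local decay, in the standard way for $\mfu^\epsilon\otimes\mfu^\epsilon$. The remaining errors carry $(I-P)g^\epsilon$ and vanish. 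Finally, I apply $\mP$ to eliminate pressure terms, pass to the limit in the weak formulation with the initial data $\mP\mfu_0$ and $q^\epsilon(0)\to(\vth_0-\beta\vrh_0)/(1+\beta)$, and obtain \eqref{eq:NSF-main}.
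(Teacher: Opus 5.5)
Your outline follows the paper's proof step for step. The shared ingredients are: uniform bounds giving $(I-P)g^\epsilon=O(\epsilon)$ in $L^2(0,T;H^s)$; the constraints $\nabla_x\cdot\mfu=0$ and $\vrh+\vth=0$ from the moment equations; Aubin--Lions for $\mP\mfu^\epsilon$ and $q^\epsilon$; and the auxiliary problems $L\widetilde A=A$, $L\widetilde B=B$ on $\calN^\perp$ (Lax--Milgram in the paper, Fredholm for you, equivalent under \eqref{eq:coer-h}). Both also use a second-order expansion of a family of local quantum equilibria. The paper turns this expansion into $Lk_\hbar[h]+\Gamma_2(h)=0$, so that $-\lal\Gamma_2(Pg),\widetilde A\ral_{L^2_p}=\lal k_\hbar[Pg],A\ral_{L^2_p}$ becomes an explicit moment computation that never needs $\widetilde A$ itself.

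The one genuinely different ingredient is the acoustic step. Strichartz estimates, after rescaling time by $\epsilon$, give a global-in-space rate, e.g. $\|(r^\epsilon,v^\epsilon)\|_{L^4(0,T;L^4_x)}\lesssim\epsilon^{1/4}\big(\|(r^\epsilon,v^\epsilon)(0)\|_{H^{1/2}_x}+\|(F^\epsilon,G^\epsilon)\|_{L^1(0,T;H^{1/2}_x)}\big)$. Local strong convergence then follows by interpolating with the $L^\infty(0,T;H^s_x)$ bound. The paper instead proves a local-smoothing bound for the frequency-truncated half-wave group by Plancherel in time (Appendix~\ref{app:acoustic}). That bound yields $\epsilon^{1/2}$ only on $\{\delta/2\le|\xi|\le2N\}$, and the low and high frequencies are made small by sending $\delta\to0$, $N\to\infty$. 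Your route is quantitative but imports Strichartz theory; the paper's is elementary and self-contained but, as written, gives no explicit rate.

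There are two points to fix in your write-up.

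\begin{enumerate}
\item In the normalization $\epsilon\pa_tg^\epsilon+p\cdot\nabla_xg^\epsilon=\epsilon^{-1}Lg^\epsilon+\Gamma^\epsilon(g^\epsilon)$ that you use in Step 3, $\Gamma^\epsilon(g)=\epsilon^{-2}\Gamma(\epsilon g)=\Gamma_2(g)+\epsilon\Gamma^\epsilon_{\ge3}(g)$ is of order one. Describing it as ``$\epsilon^{-1}$ times a quadratic form in $\epsilon g$'' makes it $O(\epsilon)$, which would erase the convection terms. The factor $\epsilon^{-1}$ in your nonlinear bound appears only after dividing the equation by $\epsilon$.
\item Getting $-\nu_\hbar\Sigma(\mfu^\epsilon)$ and $-\chi_\hbar\nabla_x\vth^\epsilon$ from $p\cdot\nabla_xPg^\epsilon$ requires more than you state. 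You need the isotropic forms $\widetilde A=\mathfrak a(|p|)(p\otimes p-\frac{|p|^2}{3}\mathbb I_3)$ and $\widetilde B=\mathfrak b(|p|)p$, which follow from rotation equivariance of $L$ plus uniqueness. You also need the orthogonality $\widetilde B\perp p\sqrt{\mu_\hbar}$, which removes the $\nabla_x\vrh^\epsilon$ contribution to the heat flux.
\end{enumerate}
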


\begin{remark} 
The coefficients $\nu_\hbar$ and $\kappa_\hbar$ depend on the fixed quantum equilibrium $\calF_\hbar$. Thus the limit $\epsilon\to0$ considered here is different from a semiclassical limit. Formally, as $\hbar\to0$, one expects
\[
\nu_\hbar\to\frac{1}{2M_{\rm cl}},\quad \kappa_\hbar\to\frac{1}{3M_{\rm cl}}, \quad \text{where } M_{\rm cl}:=\intr e^{-\frac{|p|^2}{2}-\theta_*}\,\rdp.
\]
A rigorous justification of this semiclassical behavior is independent of the incompressible hydrodynamic limit proved in this paper.
\end{remark}

%
%
%
%
%
%
%
%
%
%
\subsection{Related results} 

The rigorous derivation of incompressible fluid equations from kinetic models has a long history. For the classical Boltzmann equation, a systematic program for hydrodynamic limits was developed in the formal and perturbative settings \cite{BGL91,BGL93}. In the framework of DiPerna--Lions renormalized solutions, the incompressible Navier--Stokes limit was established in a series of works, including \cite{GS04,GS09}. The corresponding incompressible Navier--Stokes--Fourier limit was obtained in \cite{LM10} for a broad class of collision kernels, including soft potentials. In the perturbative framework of classical solutions, the incompressible Navier--Stokes--Fourier limit in the whole space was justified in \cite{JXZ18} through uniform energy estimates with respect to the Knudsen number.

Hydrodynamic limits for the Landau equation have also been investigated. The incompressible Navier--Stokes--Fourier limit was established in \cite{Rac21} for hard, Maxwellian, and moderately soft potentials in a perturbative framework. In \cite{CRT22}, uniform regularization estimates for the rescaled Landau equation were combined with estimates for the limiting fluid system to obtain strong convergence toward the incompressible Navier--Stokes--Fourier system. More recently, a spectral and unified approach for conservative kinetic equations with a spectral gap was developed in \cite{GL24}. This framework treats the Boltzmann and Landau equations in a common setting and also applies, at least at the linearized level, to quantum kinetic equations with Fermi--Dirac or Bose--Einstein statistics.

Hydrodynamic limits have also been studied for nonlinear Vlasov--Fokker--Planck type models. In \cite{CJ24}, the incompressible Navier--Stokes limit was derived from a nonlinear Vlasov--Fokker--Planck equation, while the incompressible Euler limit in the constant-temperature regime was obtained in \cite{CJ26}.   Very
recently, an abstract hydrodynamic-limit framework for non-bilinear kinetic equations was developed in \cite{Ger26} and applied to the full nonlinear classical Fokker--Planck equation with self-consistent bulk velocity and temperature, yielding the incompressible Navier--Stokes--Fourier limit together with a description of the initial
layers. These results are classical, in the sense that the diffusion and friction mechanisms do not involve quantum occupation factors. In contrast, \eqref{eq:nqfp} incorporates the Bose--Einstein or Fermi--Dirac factor $f(1+\hbar\kappa f)$, which modifies the equilibrium manifold, the macroscopic moment relations, and the resulting transport coefficients.

For quantum kinetic equations, rigorous fluid-limit results are more recent and considerably less developed than in the classical Boltzmann theory. Earlier hydrodynamic limits for quantum kinetic equations of Uehling--Uhlenbeck type were considered in \cite{AL97}. More recently, the incompressible Navier--Stokes--Fourier limit of the Boltzmann--Fermi--Dirac equation was justified in \cite{JXZ22} for perturbative classical solutions near a global Fermi--Dirac equilibrium. The compressible Euler and acoustic limits for the same quantum Boltzmann model were subsequently established in \cite{JZ24}. These works retain the quantum statistical effects in the limiting procedure: the Knudsen number tends to zero while the quantum parameter remains fixed. A low-regularity extension of the incompressible Navier--Stokes--Fourier limit for the Boltzmann--Fermi--Dirac equation was recently obtained in \cite{JWZ26}.

Compared with these developments, hydrodynamic limits for quantum Fokker--Planck equations are much less understood. Quantum Fokker--Planck equations on a classical kinetic phase space have been studied in several forms, often with prescribed diffusion and friction coefficients; see, for instance, \cite{NS07,LZ15} and the references therein. The nonlinear quantum Fokker--Planck equation studied here was introduced and analyzed near equilibrium in \cite{CHHpre}. In that work, the equation was formally derived from the quantum Landau operator in the Maxwellian-molecule case under a radial symmetry ansatz, and the global perturbative well-posedness theory was established together with the propagation of nonnegativity and the Pauli admissible bound in the fermionic case.

To the best of our knowledge, the present work provides the first rigorous incompressible Navier--Stokes--Fourier limit  from the nonlinear quantum Fokker--Planck equation \eqref{eq:nqfp}, with the Bose--Einstein or Fermi--Dirac parameter kept fixed throughout the hydrodynamic limit. Our analysis keeps the quantum parameter fixed throughout the hydrodynamic limit, identifies the limiting viscous stress tensor and heat flux through microscopic auxiliary equations for the linearized quantum Fokker--Planck operator, and derives transport coefficients determined by the underlying quantum equilibrium. In this sense, the result gives a quantum counterpart of the incompressible limits known for classical nonlinear Vlasov--Fokker--Planck models, while retaining the full Bose--Einstein or Fermi--Dirac correction at the level of the limiting coefficients.

%
%
%
%
%
%
%
%
%
%
\subsection{Strategy of the proof and organization of the paper} 

We briefly explain the main points of the proof. The overall framework is based on a perturbative macro--micro decomposition around the global quantum equilibrium $\calF_\hbar$, but the diffusive scaling creates several additional terms that are specific to the nonlinear quantum Fokker--Planck structure. After the perturbation ansatz $f^\epsilon=\calF_\hbar+\epsilon\sqrt{\mu_\hbar}\,g^\epsilon$, the rescaled equation becomes
\[
\epsilon\pa_t g^\epsilon+p\cdot\nabla_x g^\epsilon=\frac{1}{\epsilon}Lg^\epsilon+\Gamma^\epsilon(g^\epsilon),
\]
where the nonlinear remainder $\Gamma^\epsilon$ contains moment-dependent terms generated by the self-consistent macroscopic fields. A first difficulty is that these terms have to be estimated uniformly with respect to the Knudsen number, although the collision operator acts on the fast time scale $\epsilon^{-1}$. We adapt the global perturbative estimates of \cite{CHHpre} to this singular scaling and obtain uniform bounds which imply that the microscopic component $(I-P)g^\epsilon$ is of order $\epsilon$ in the natural dissipative norm associated with the linearized operator. Together with uniform bounds for the macroscopic part, this shows that every weak limit belongs to the quantum macroscopic space $\calN$.

The next step is to identify the correct limiting macroscopic variables. Since the null space is determined by the quantum weight $\mu_\hbar$, the limiting perturbation is not expressed in terms of the classical Maxwellian modes. We consider the hydrodynamic moments $\vrh^\epsilon$, $\mfu^\epsilon$, and $\vth^\epsilon$ and derive their local conservation laws. Passing to the limit in the mass and momentum equations yields the incompressibility condition and the Boussinesq relation
\[
\nabla_x\cdot\mfu=0,\quad \vrh+\vth=0.
\]
For strong compactness, the relevant thermal variable is not $\vth^\epsilon$ alone, but the quantum-adapted combination
\[
q^\epsilon=\frac{\vth^\epsilon-\beta\vrh^\epsilon}{1+\beta}.
\]
This reflects the fact that the density and temperature fluctuations are coupled through the quantum moment relations. Uniform bounds on the time derivatives of $q^\epsilon$ and $\mP\mfu^\epsilon$ allow us to apply the Aubin--Lions compactness lemma. The complementary acoustic variables
\[
\vrh^\epsilon+\vth^\epsilon,\quad (I-\mP)\mfu^\epsilon,
\]
satisfy a fast wave system with propagation speed of order $\epsilon^{-1}$. Since the problem is posed in the whole space, compactness of these acoustic modes cannot be obtained from compact embeddings alone. We hence use a local dispersive estimate for the fast acoustic group to show that the acoustic variables vanish strongly in local Sobolev spaces.

The final and most model-dependent part of the proof is the identification of the limiting fluxes. Since the quantum parameter $\hbar>0$ is fixed, the stress tensor and heat flux cannot be replaced by their classical Maxwellian counterparts. Instead, we solve the microscopic auxiliary equations
\[
L\widetilde A=A,\quad L\widetilde B=B
\]
on $\calN^\perp$ and define the quantum viscosity and thermal diffusivity through the corresponding quadratic forms. A further difference from the classical linear Fokker--Planck setting is that the nonlinear self-consistent fields contribute to the macroscopic convective fluxes. To isolate this contribution, we expand the local quantum equilibrium manifold to second order and compute the pairings of the resulting quadratic term with $\widetilde A$ and $\widetilde B$. The transport part then yields the dissipative fluxes, while the quadratic equilibrium expansion produces the nonlinear convection terms. This gives the incompressible Navier--Stokes--Fourier system with quantum-dependent normalization and transport coefficients \eqref{eq:NSF-main}.

%
%
%
%
%
%
%
%
%
%

The remainder of the paper is organized as follows. In Section \ref{sec:scaled-uniform}, we analyze the scaled nonlinear operator and prove global estimates uniform in $\epsilon$. In Section \ref{sec:compact}, we identify the limiting infinitesimal quantum equilibrium, derive the local conservation laws, prove the incompressibility condition and the Boussinesq relation, and establish compactness of the relevant macroscopic modes. In Section \ref{sec:NSF}, we solve the microscopic auxiliary equations, compute the macroscopic flux expansions, analyze the acoustic modes, and pass to the incompressible Navier--Stokes--Fourier system. The proof of the local dispersive estimate for fast acoustic waves is given in Appendix \ref{app:acoustic}.

%
%
%
%
%
%
%
%
%
%

\section{Uniform estimates for the rescaled equation} \label{sec:scaled-uniform}
%
%
%
%
%
%
%
%
%
%
 
\subsection{Structure of the scaled nonlinear operator}\label{ssec:scaled-nl}

We now give the explicit form of the scaled nonlinear operator. The formulas are obtained from \cite[Lemma 3.1 and Proposition 3.1]{CHHpre} by replacing the perturbation $g$ with $\epsilon g$. 

We introduce the velocity dissipation norm. For a function $g=g(x,p)$, we define
\[
\|g\|_{D}^2:=\intrr \lt(|\nabla_p g|^2+|p|^2\eta_\hbar^2|g|^2\rt) \rdx\rdp \quad \text{and} \quad \|g\|_{D,s}^2:=\sum_{|\alpha|+|\beta|\le s}\|\partial_x^\alpha\partial_p^\beta g\|_{D}^2.
\]
When only the velocity variable is involved, we write $|g|_D$ for the corresponding $L^2_p$-norm.

We also set
\[
\mathscr{a}(g):=\intr g\sqrt{\mu_\hbar}\,\rdp,\quad \mathscr{b}(g):=\intr p\eta_\hbar g\sqrt{\mu_\hbar}\,\rdp,\quad \mathscr{c}(g):=\intr |p|^2\eta_\hbar g\sqrt{\mu_\hbar}\,\rdp,
\]
and
\[
R^\epsilon_1(g):=\frac{\intr \lt(\eta_\hbar g\sqrt{\mu_\hbar}+\epsilon\hbar\kappa\mu_\hbar g^2\rt)\rdp}{m_0 +\epsilon \intr \lt(\eta_\hbar g\sqrt{\mu_\hbar}+\epsilon\hbar\kappa\mu_\hbar g^2\rt)\rdp},\quad R^\epsilon_2(g):=\mathscr{b}(g)+\epsilon\hbar\kappa\intr p\mu_\hbar g^2\,\rdp.
\]
The scaled nonlinear remainders associated with the bulk velocity and temperature are defined by
\bq\label{eq:Nueps}
N^\epsilon_u(g):=-\frac{\mathscr{b}(g)}{m_0 }R^\epsilon_1(g)+\frac{\hbar\kappa}{m_0 }\intr p\mu_\hbar g^2\,\rdp\lt(1-\epsilon R^\epsilon_1(g)\rt)
\eq
and
\bq\label{eq:NTeps}
N^\epsilon_\Theta(g):=\frac{\hbar\kappa}{3}\intr |p|^2\mu_\hbar g^2\,\rdp-\frac{1}{3}\lt(\frac{\mathscr{b}(g)}{m_0 }+\epsilon N^\epsilon_u(g)\rt)\cdot R^\epsilon_2(g).
\eq
Then, the macroscopic fields satisfy
\[
\rho^\epsilon=M +\epsilon \mathscr{a}(g^\epsilon),\quad u^\epsilon=\epsilon\frac{\mathscr{b}(g^\epsilon)}{m_0 }+\epsilon^2N^\epsilon_u(g^\epsilon),\quad \rho^\epsilon \Theta^\epsilon=M +\frac{\epsilon}{3}\mathscr{c}(g^\epsilon)+\epsilon^2N^\epsilon_\Theta(g^\epsilon),
\]
where
\[
M :=\intr \calF_\hbar\,\rdp.
\]

The linearized operator is given by
\[
L  g:=\frac{M }{\sqrt{\mu_\hbar}}\nabla_p\cdot\bigg\{\lt(\nabla_pg+\frac{1}{2}p\eta_\hbar g\rt)\sqrt{\mu_\hbar}
-\frac{p\mu_\hbar}{\intr |p|^2\mu_\hbar\,\rdp}\intr \lt(|p|^2\eta_\hbar-3\rt)g\sqrt{\mu_\hbar}\,\rdp -\frac{\mu_\hbar}{m_0 }\intr p\eta_\hbar g\sqrt{\mu_\hbar}\,\rdp\bigg\}.
\]
We recall from \cite[Lemma 3.2]{CHHpre} the microscopic coercivity of $L$. If $P$ denotes the $L^2_p$-orthogonal projection onto $\calN$, then there exists $\lambda_\hbar>0$ such that
\bq\label{eq:coer-h}
-\left\langle Lg,g\right\rangle_{L^2_p} \ge \lambda_\hbar \left|(I-P)g\right|_D^2
\eq
for all sufficiently regular $g$.
We define the scaled nonlinear flux by
\bq\label{eq:R-eps}
\begin{aligned}
\calR^\epsilon(g)&:=\frac{\mathscr{c}(g)}{3}\nabla_p\lt(g\sqrt{\mu_\hbar}\rt)+N^\epsilon_\Theta(g)\lt\{-p\mu_\hbar+\epsilon\nabla_p\lt(g\sqrt{\mu_\hbar}\rt)\rt\}+\hbar\kappa M  p\mu_\hbar g^2\\
&\quad -\frac{M  \mathscr{b}(g)}{m_0 }\lt\{\eta_\hbar g\sqrt{\mu_\hbar}+\epsilon\hbar\kappa\mu_\hbar g^2\rt\}-M  N^\epsilon_u(g)\lt\{\mu_\hbar+\epsilon\eta_\hbar g\sqrt{\mu_\hbar}+\epsilon^2\hbar\kappa\mu_\hbar g^2\rt\}\\
&\quad +\mathscr{a}(g)p\lt\{\eta_\hbar g\sqrt{\mu_\hbar}+\epsilon\hbar\kappa\mu_\hbar g^2\rt\}-\mathscr{a}(g)\lt\{\frac{\mathscr{b}(g)}{m_0 }+\epsilon N^\epsilon_u(g)\rt\}\lt\{\mu_\hbar+\epsilon\eta_\hbar g\sqrt{\mu_\hbar}+\epsilon^2\hbar\kappa\mu_\hbar g^2\rt\}.
\end{aligned}
\eq
Accordingly, we set
\[
\Gamma^\epsilon(g):=\frac{1}{\sqrt{\mu_\hbar}}\nabla_p\cdot\calR^\epsilon(g).
\]
With this notation, the perturbation equation takes the form
\[
\epsilon\pa_tg^\epsilon+p\cdot\nabla_xg^\epsilon=\frac{1}{\epsilon}L  g^\epsilon+\Gamma^\epsilon(g^\epsilon).
\]

Equivalently, if $\Gamma$ denotes the nonlinear operator in the unscaled perturbative equation studied in \cite{CHHpre}, then
\bq\label{eq:Gam-scale}
\Gamma^\epsilon(g)=\frac{1}{\epsilon^2}\Gamma(\epsilon g).
\eq
This identity makes the role of the diffusive scaling explicit.

We next isolate the quadratic part of the scaled nonlinear operator. Define
\[
N_{u,2}(g):=-\frac{\mathscr{b}(g)}{m_0^2}\intr \eta_\hbar g\sqrt{\mu_\hbar}\,\rdp+\frac{\hbar\kappa}{m_0 }\intr p\mu_\hbar g^2\,\rdp,\quad N_{\Theta,2}(g):=\frac{\hbar\kappa}{3}\intr |p|^2\mu_\hbar g^2\,\rdp - \frac{|\mathscr{b}(g)|^2}{3m_0 }.
\]
We set
\[\begin{aligned}
\calR_2 (g)&:=\frac{\mathscr{c}(g)}{3}\nabla_p\lt(g\sqrt{\mu_\hbar}\rt)-N_{\Theta,2}(g)p\mu_\hbar+\hbar\kappa M  p\mu_\hbar g^2-\frac{M  \mathscr{b}(g)}{m_0 }\eta_\hbar g\sqrt{\mu_\hbar}\\
&\quad -M  N_{u,2}(g)\mu_\hbar+\mathscr{a}(g)p\eta_\hbar g\sqrt{\mu_\hbar}-\frac{\mathscr{a}(g)\mathscr{b}(g)}{m_0 }\mu_\hbar
\end{aligned}\]
and
\bq\label{eq:Gam2-h}
\Gamma_2 (g):=\frac{1}{\sqrt{\mu_\hbar}}\nabla_p\cdot\calR_2 (g).
\eq
The operator $\Gamma_2 $ is quadratic in $g$. Moreover, for sufficiently small $\|g\|_{H^s_{x,p}}$, we have
\bq\label{eq:Gam-dec-eps}
\Gamma^\epsilon(g)=\Gamma_2 (g)+\epsilon\Gamma^\epsilon_{\ge3}(g),
\eq
where $\Gamma^\epsilon_{\ge3}$ is uniformly bounded for $0<\epsilon\le1$ in the perturbative regime.

The estimates for the scaled nonlinear operator follow directly from the corresponding estimates for the unscaled operator.

\begin{lemma}\label{lem:Gam-eps-est}
Let $s\ge4$. There exists a constant $\delta_{\rm in}>0$ such that the following holds. Suppose that $g,h\in H^s(\R^3\times\R^3)$ satisfy $\|g\|_{H^s_{x,p}}\le\delta_{\rm in}$.
Then, for every pair of multi-indices $\alpha$ and $\beta$ satisfying $|\alpha|+|\beta|\le s$,
\bq\label{eq:Gam-eps-est}
\lt|\lal\partial_x^\alpha\partial_p^\beta\Gamma^\epsilon(g),\partial_x^\alpha\partial_p^\beta h\ral_{L^2_{x,p}}\rt|\le C_\hbar\|g\|_{H^s_{x,p}}\lt(\|\partial_x^\alpha\partial_p^\beta(I-P)h\|_{D }+\|\partial_x^\alpha h\|_{L^2_{x,p}}\rt)\|g\|_{D,s},
\eq
where $C_\hbar>0$ is independent of $\epsilon\in(0,1]$.

Furthermore, if $g_1,g_2\in H^s(\R^3\times\R^3)$ satisfy $\|g_1\|_{H^s_{x,p}}+\|g_2\|_{H^s_{x,p}}\le\delta_{\rm in}$, then
\bq\label{eq:Gam-eps-diff}
\begin{aligned}
&\lt|\lal\Gamma^\epsilon(g_1)-\Gamma^\epsilon(g_2),\varphi\ral_{L^2_{x,p}}\rt|\cr
&\quad \le C_\hbar\lt\{ \lt(\|g_1\|_{H^s_{x,p}}+\|g_2\|_{H^s_{x,p}}\rt)\|g_1-g_2\|_{D }  +\lt(\|g_1\|_{D,s}+\|g_2\|_{D,s}\rt)\|g_1-g_2\|_{L^2_{x,p}}\rt\}\|\varphi\|_{D }
\end{aligned}
\eq
for every test function $\varphi$ satisfying $\|\varphi\|_{D }<\infty$. The constants are independent of $\epsilon\in(0,1]$.
\end{lemma}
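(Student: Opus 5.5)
The plan is to reduce both estimates to the unscaled nonlinear estimates of \cite{CHHpre} through the scaling identity \eqref{eq:Gam-scale}. We track the powers of $\epsilon$ by hand, because the unscaled bounds are homogeneous only to leading order.

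\textbf{Step 1: what can be assumed.} From \cite{CHHpre} we take that $\Gamma$ admits a bound of the form
\[
\lt|\lal\partial_x^\alpha\partial_p^\beta\Gamma(G),\partial_x^\alpha\partial_p^\beta h\ral\rt|\le C\|G\|_{H^s}\lt(\|\partial_x^\alpha\partial_p^\beta(I-P)h\|_D+\|\partial_x^\alpha h\|_{L^2}\rt)\|G\|_{D,s},
\]
valid for $\|G\|_{H^s}\le\delta$. This structure holds because $\Gamma(G)=\mu_\hbar^{-1/2}\nabla_p\cdot\calR(G)$.

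\textbf{Step 2: the naive substitution is not enough.} Inserting $G=\epsilon g$ into this bound only gives
\[
\epsilon^{-2}\cdot\epsilon\|g\|_{H^s}\cdot\epsilon\|g\|_{D,s}=\|g\|_{H^s}\|g\|_{D,s},
\]
which is already \eqref{eq:Gam-eps-est}. So the $\epsilon$-cancellation is automatic, provided the constant from \cite{CHHpre} is independent of the size of $G$ in the small regime. It is, since $\|\epsilon g\|_{H^s}\le\delta_{\rm in}$.

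The subtle point is that the unscaled estimate must genuinely be quadratic with no linear leftover. I would double-check this directly on the explicit flux \eqref{eq:R-eps}. Every term of $\calR^\epsilon(g)$ is a product of at least two factors, each linear in $g$: one of $\mathscr{a},\mathscr{b},\mathscr{c},N^\epsilon_u,N^\epsilon_\Theta$, or $g$, $\nabla_p g$, $g^2$. The higher-order pieces carry extra factors $\epsilon\le1$. The denominators satisfy $m_0+\epsilon\int(\cdots)\ge m_0/2$ by Sobolev embedding $H^2_x\subset L^\infty_x$ and smallness, so $R^\epsilon_1$ and $N^\epsilon_u$ are bounded uniformly in $\epsilon$.

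\textbf{Step 3: the term-by-term estimate.} Integrate by parts in $p$ once, moving $\nabla_p\cdot$ onto $\partial_p^\beta h/\sqrt{\mu_\hbar}$. The weight derivatives $\nabla_p\mu_\hbar^{-1/2}\sim p\eta_\hbar\mu_\hbar^{-1/2}$ are absorbed by the $\mu_\hbar$ factors in $\calR$. Distribute $\partial^\alpha_x\partial^\beta_p$ by Leibniz. The macroscopic coefficients depend only on $x$, and their $L^\infty_x$ or $L^2_x$ norms are controlled by $\|g\|_{H^s}$ or $\|g\|_{D,s}$. Put the higher-derivative factor in $L^2_x$ and the lower one in $L^\infty_x$, using $s\ge4$.

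Now split $h=Ph+(I-P)h$. On the $(I-P)h$ part we get the $D$-norm. On the $Ph$ part, the $p$-derivatives fall on explicit smooth decaying functions, and only $\|\partial^\alpha_x h\|_{L^2}$ appears. The factor $\|g\|_{D,s}$ arises because one factor can always be placed in the dissipation norm: $\nabla_p g$ and $|p|\eta_\hbar g$ both appear in $|g|_D$. Moments are bounded by $|g|_D$ as well.

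\textbf{Step 4: the difference estimate \eqref{eq:Gam-eps-diff}.} Write $\calR^\epsilon(g_1)-\calR^\epsilon(g_2)$ telescopically as a sum of products with one factor depending on $g_1-g_2$. The coefficient differences such as $R_1^\epsilon(g_1)-R_1^\epsilon(g_2)$ are Lipschitz, by the lower bound on the denominators. Then integrate by parts once against $\varphi$, so $\|\varphi\|_D$ appears.

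Put $g_1-g_2$ in $\|\cdot\|_D$ and the other factor in $L^\infty_x$ via $H^s$; or put $g_1-g_2$ in $L^2$ and the other factor in $L^\infty_x$ via $\|\cdot\|_{D,s}$. The choice depends on whether the $p$-derivative or the $|p|$ weight falls on the difference.

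\textbf{Main obstacle.} I expect the hardest part to be the bookkeeping in the top-order case $|\alpha|+|\beta|=s$ when all derivatives land on $\nabla_p(g\sqrt{\mu_\hbar})$. This is resolved by the integration by parts in Step 3: then only $s$ derivatives of $\nabla_p g$ occur, which is within $\|g\|_{D,s}$.
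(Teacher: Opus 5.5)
Your Steps 1--2 are exactly the paper's proof of \eqref{eq:Gam-eps-est}. By \eqref{eq:Gam-scale} one applies \cite[Lemma 4.3]{CHHpre} to $\epsilon g$, which is admissible since $\|\epsilon g\|_{H^s_{x,p}}\le\delta_{\rm in}$, and the factors $\epsilon^{-2}\cdot\epsilon\cdot\epsilon$ cancel. The paper obtains \eqref{eq:Gam-eps-diff} in the same one-line way, applying \cite[Lemma 4.6]{CHHpre} to $\epsilon g_1$ and $\epsilon g_2$; that bound is also exactly bilinear in the norms, so the same cancellation occurs. Your telescoping Step 4 is therefore a correct but unnecessary re-derivation of the cited lemma. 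It does close: $\varphi$ carries no derivatives, so one integration by parts gives $-\lal (\calR^\epsilon(g_1)-\calR^\epsilon(g_2))/\sqrt{\mu_\hbar},\calA_{\hbar,+}\varphi\ral_{L^2_{x,p}}$ with $\|\calA_{\hbar,+}\varphi\|_{L^2_{x,p}}\le C\|\varphi\|_D$. Your placement of $g_1-g_2$ in $D$ or in $L^2$ then works.

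There are two cautions about the parts you added as a double-check.

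First, your remark that the unscaled bounds are ``homogeneous only to leading order'' is misleading. $\Gamma$ itself is not homogeneous, but the bound is exactly quadratic in the norms, with a constant uniform on the small ball. That is precisely why no $\epsilon$-bookkeeping is needed. Likewise, $N^\epsilon_u$, $N^\epsilon_\Theta$ and $g^2$ are at least quadratic factors, not linear ones.

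Second, Step 3 would not close as you describe it once $|\beta|\ge1$. Consider the pieces of \eqref{eq:R-eps} that carry a single $\sqrt{\mu_\hbar}$, e.g.\ $\frac{\mathscr{c}(g)}{3}\nabla_p(g\sqrt{\mu_\hbar})$ or $\mathscr{a}(g)p\eta_\hbar g\sqrt{\mu_\hbar}$. For these there is no spare $\mu_\hbar$ to absorb the weights of order $|p|^{|\beta|+2}$ created when $\partial_p^\beta$ falls separately on $\mu_\hbar^{-1/2}$ and $\sqrt{\mu_\hbar}$. The $D$-norms of $g$ and $h$ supply only $|p|^2$ in total. You would first have to rewrite these pieces as differential operators acting on $g$, with coefficients growing at most like $|p|^2$, and then use that commutators with $\partial_p^\beta$ lower the weight. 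That is the work contained in \cite{CHHpre}, so relying on the citation, as you and the paper do, is the right call.
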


\begin{proof}
By \eqref{eq:Gam-scale}, we have $\Gamma^\epsilon(g)=\epsilon^{-2}\Gamma(\epsilon g)$. Applying \cite[Lemma 4.3]{CHHpre} to $\epsilon g$ and using $\|\epsilon g\|_{H^s_{x,p}}=\epsilon\|g\|_{H^s_{x,p}}$, $\|\epsilon g\|_{D,s}=\epsilon\|g\|_{D,s}$, we obtain \eqref{eq:Gam-eps-est}. Similarly, applying \cite[Lemma 4.6]{CHHpre} to $\epsilon g_1$ and $\epsilon g_2$ yields \eqref{eq:Gam-eps-diff}.
\end{proof}
%
%
%
%
%
%
%
%
%
%
\subsection{Uniform global estimates} 

We next establish global estimates that are uniform with respect to the Knudsen number $\epsilon$. Throughout this subsection, the quantum parameter $\hbar>0$ is fixed and satisfies \eqref{eq:h-cond}. Accordingly, the constants below may depend on $\hbar$, but are independent of $\epsilon\in(0,1]$. We use the dissipation norm introduced in Subsection \ref{ssec:scaled-nl}.

We write the macroscopic part of $g^\epsilon$ as
\[
P g^\epsilon=\lt(a^\epsilon+p\cdot b^\epsilon+|p|^2c^\epsilon\rt)\sqrt{\mu_\hbar},
\]
where $a^\epsilon=a^\epsilon(t,x)$ and $c^\epsilon=c^\epsilon(t,x)$ are scalar-valued functions, and $b^\epsilon=b^\epsilon(t,x)$ is an $\R^3$-valued function. Since the macroscopic space $\calN$ is finite-dimensional, we have
\[
\|P g^\epsilon\|_{H^m_{x,p}}\sim\|(a^\epsilon,b^\epsilon,c^\epsilon)\|_{H^m_x}
\]
for every integer $m\ge0$.

We introduce the dissipation functional
\[
\calD^\epsilon_s(g):=\frac{1}{\epsilon^2}\|(I-P)g\|_{D,s}^2+\|\nabla_x(a,b,c)\|_{H^{s-1}_x}^2.
\]
The first term controls the microscopic relaxation on the fast collision time scale, while the second term provides dissipation for the macroscopic variables.

\begin{proposition}\label{prop:unif-est}
Let $s\ge4$. There exist constants $\delta_{\rm in}>0$, $\lambda_\hbar>0$, and $C_\hbar>1$ such that the following holds. Let $g^\epsilon$ be a smooth solution to \eqref{eq:pert-eq} on $[0,T]$ satisfying
\[
\sup_{0\le t\le T}\|g^\epsilon(t)\|_{H^s_{x,p}}\le\delta_{\rm in}.
\]
Then, there exists an energy functional $\calE^\epsilon_s(g^\epsilon)$ such that
\[
C_\hbar^{-1}\|g^\epsilon(t)\|_{H^s_{x,p}}^2\le\calE^\epsilon_s(g^\epsilon)(t)\le C_\hbar\|g^\epsilon(t)\|_{H^s_{x,p}}^2
\]
and
\[
\frac{\rm d}{\rdt}\calE^\epsilon_s(g^\epsilon)(t)+\lambda_\hbar\calD^\epsilon_s(g^\epsilon)(t)\le0
\]
for every $0\le t\le T$. The constants are independent of $\epsilon\in(0,1]$.
\end{proposition}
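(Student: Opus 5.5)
We follow the standard macro--micro energy method of \cite{CHHpre}, keeping track of the powers of $\epsilon$. The equation is rewritten as $\pa_t g^\epsilon+\epsilon^{-1}p\cdot\nabla_x g^\epsilon=\epsilon^{-2}Lg^\epsilon+\epsilon^{-1}\Gamma^\epsilon(g^\epsilon)$.

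\emph{Step 1: pure spatial derivatives.} For $|\alpha|\le s$ we apply $\pa_x^\alpha$, take the $L^2_{x,p}$ inner product with $\pa_x^\alpha g^\epsilon$, and use that the transport term is antisymmetric. The coercivity \eqref{eq:coer-h} then yields the dissipation $\lambda_\hbar\epsilon^{-2}\|\pa_x^\alpha(I-P)g^\epsilon\|_D^2$. For the nonlinear term we use the conservation laws: $\Gamma^\epsilon(g)\perp\calN$ for the relevant collision invariants, or, failing exact orthogonality, the divergence structure $\Gamma^\epsilon=\mu_\hbar^{-1/2}\nabla_p\cdot\calR^\epsilon$. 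This lets the pairing act essentially on $(I-P)g^\epsilon$. By Lemma \ref{lem:Gam-eps-est},
\[
\epsilon^{-1}\bigl|\lal\pa_x^\alpha\Gamma^\epsilon,\pa_x^\alpha g^\epsilon\ral\bigr|\le C\delta_{\rm in}\bigl(\epsilon^{-2}\|(I-P)g^\epsilon\|_{D,s}^2+\|g^\epsilon\|_{D,s}^2\bigr),
\]
and since $\|g\|_{D,s}^2\lesssim\|(I-P)g\|_{D,s}^2+\|(a,b,c)\|_{H^s_x}^2$, this is absorbed provided the zero-order part of $(a,b,c)$ never appears. That is ensured because the quadratic terms contain $\mathscr{a},\mathscr{b},\mathscr{c}$, which can be paired with derivative factors. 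Where this is delicate, we instead use the bound $\|\pa_x^\alpha h\|_{L^2}$ in \eqref{eq:Gam-eps-est} with an $L^\infty_x$--$L^2_x$ splitting, placing one factor in $\nabla_x$.

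\emph{Step 2: macroscopic dissipation.} We derive the fluid-type system for $(a^\epsilon,b^\epsilon,c^\epsilon)$ by projecting onto the thirteen moments $\sqrt{\mu_\hbar}\{1,p_i,p_ip_j,p_i|p|^2\}$. The scaled equation gives relations of the form
\[
\epsilon\pa_t(a,b,c)+\nabla_x(\cdots)=\epsilon^{-1}\ell\bigl((I-P)g\bigr)+\text{nonlinear terms},
\]
together with the moment equations for the microscopic part. Following the construction of an interactive functional $\calE_{\rm int}$ in \cite{CHHpre} (terms such as $\lal\pa^\alpha(I-P)g,\ \zeta(p)\,\pa^\alpha\nabla_x c\ral$), we obtain
\[
\epsilon\frac{\rm d}{\rdt}\calE_{\rm int}+\lambda\|\nabla_x(a,b,c)\|_{H^{s-1}}^2\le C\epsilon^{-2}\|(I-P)g\|_{D,s}^2+C\delta_{\rm in}^2\calD^\epsilon_s.
\]
The key check is the scaling: the $\epsilon\pa_t$ combines with the $\epsilon^{-1}$ in front of $\ell((I-P)g)$, so multiplying by $\epsilon^{-1}$ matches the dissipation $\epsilon^{-2}\|(I-P)g\|^2$. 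Because $\calE_{\rm int}$ enters multiplied by $\epsilon\le1$, the resulting combination remains equivalent to $\|g\|_{H^s}^2$.

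\emph{Step 3: mixed derivatives.} For $|\beta|\ge1$ we apply $\pa_x^\alpha\pa_p^\beta$ only to $(I-P)g^\epsilon$, obtained by applying $I-P$ to the equation. The commutator $[\pa_p^\beta,p\cdot\nabla_x]$ produces $\epsilon^{-1}\pa_x^{\alpha+e_i}\pa_p^{\beta-e_i}$ terms. These are bounded by Cauchy--Schwarz via $\eta\epsilon^{-2}\|\pa^\beta(I-P)g\|_D^2+C_\eta\|\pa_x^{\alpha+e_i}\pa_p^{\beta-e_i}g\|^2$, and induction on $|\beta|$ closes them. The commutator of $L$ with $\pa_p^\beta$ yields lower-order terms times $\epsilon^{-2}$, again handled inductively. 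Finally $\calE^\epsilon_s=\sum_\alpha\|\pa^\alpha g\|^2+\kappa_1\epsilon\calE_{\rm int}+\kappa_2\sum_{|\beta|\ge1}C_\beta\|\pa^\alpha_\beta(I-P)g\|^2$ with small constants, and smallness of $\delta_{\rm in}$ absorbs all remaining terms.

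The main obstacle is Step 2 with the correct $\epsilon$-weights, in particular controlling $\epsilon\pa_t(I-P)g$ inside the interactive functional. We expect to handle it by moving the time derivative onto the interactive functional rather than estimating it directly.
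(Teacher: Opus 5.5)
Your proposal follows essentially the same route as the paper: a microscopic energy estimate for pure spatial and mixed derivatives yielding the dissipation $\epsilon^{-2}\|(I-P)g^\epsilon\|_{D,s}^2$, an interaction functional satisfying $\epsilon\frac{\rm d}{\rdt}\calI+\|\nabla_x(a^\epsilon,b^\epsilon,c^\epsilon)\|_{H^{s-1}_x}^2\lesssim\epsilon^{-2}\|(I-P)g^\epsilon\|_{D,s}^2$, and the combination $\calE^{\rm mic}_s+\vartheta_\hbar\epsilon\calI$ with suitably ordered small weights. You also point out that the $\epsilon^{-1}$ in front of $\Gamma^\epsilon$ prevents applying Lemma \ref{lem:Gam-eps-est} verbatim; the fix you sketch is correct, namely using $\Gamma^\epsilon(g)\perp\calN$ (which holds exactly by the conservation laws) and bounding the zero-order macroscopic factor in $L^\infty_x$ by $\|\nabla_x(a^\epsilon,b^\epsilon,c^\epsilon)\|_{H^1_x}$, and this is a step the paper leaves implicit.
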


\begin{proof}
The proof follows the weighted macro--micro energy argument of
\cite[Lemma 5.8]{CHHpre}. We only indicate the modifications caused by the diffusive scaling.

Dividing \eqref{eq:pert-eq} by $\epsilon$, we obtain
\[
\pa_tg^\epsilon+\frac{1}{\epsilon}p\cdot\nabla_xg^\epsilon=\frac{1}{\epsilon^2}L  g^\epsilon+\frac{1}{\epsilon}\Gamma^\epsilon(g^\epsilon).
\]
The coercivity estimate \eqref{eq:coer-h} yields the microscopic dissipation
\[
\frac{1}{\epsilon^2} \| (I-P)g^\epsilon \|_{D,s}^2.
\]
Repeating the weighted spatial and mixed derivative estimates in
\cite[Lemmas 5.2 and 5.3]{CHHpre}, and using the scaled nonlinear estimate
in Lemma \ref{lem:Gam-eps-est}, we obtain a microscopic energy functional
$\calE^\mathrm{mic}_s$ satisfying
\bq\label{eq:micro-est}
\frac{\rm d}{\rdt} \calE^\mathrm{mic}_s ( g^\epsilon ) + \frac{c_{1,\hbar}}{\epsilon^2} \| (I-P)g^\epsilon \|_{D,s}^2 \le \varepsilon_* \| \nabla_x ( a^\epsilon, b^\epsilon, c^\epsilon ) \|_{H^{s-1}_x}^2,
\eq
where $\varepsilon_*>0$ can be chosen arbitrarily small by fixing the weights in the mixed derivative hierarchy.

Similarly, the argument of \cite[Lemma 5.7]{CHHpre} gives an interaction functional $\calI$ satisfying
\bq\label{eq:macro-est}
\epsilon \frac{\rm d}{\rdt} \calI(t) + \frac{1}{2} \| \nabla_x ( a^\epsilon, b^\epsilon, c^\epsilon ) \|_{H^{s-1}_x}^2 \le \frac{C_\hbar}{\epsilon^2} \| (I-P)g^\epsilon \|_{D,s}^2.
\eq
Moreover,
\[
\lt| \calI(t) \rt| \le C_\hbar \| g^\epsilon(t) \|_{H^s_{x,p}}^2.
\]

Choosing $\vartheta_\hbar>0$ sufficiently small and then fixing
$\varepsilon_*>0$ so that $\varepsilon_* \le \frac{\vartheta_\hbar}{4}$, we define
\[
\calE^\epsilon_s ( g^\epsilon ) := \calE^\mathrm{mic}_s ( g^\epsilon ) + \vartheta_\hbar \epsilon \calI.
\]
Combining \eqref{eq:micro-est} and $\vartheta_\hbar$ times \eqref{eq:macro-est}, we obtain
\[
\frac{\rm d}{\rdt} \calE^\epsilon_s ( g^\epsilon ) + \lambda_\hbar \calD^\epsilon_s ( g^\epsilon ) \le 0.
\]
Since $0<\epsilon\le1$, the interaction term is a lower-order perturbation of the microscopic energy. Therefore,
\[
C_\hbar^{-1} \| g^\epsilon \|_{H^s_{x,p}}^2 \le \calE^\epsilon_s ( g^\epsilon ) \le C_\hbar \| g^\epsilon \|_{H^s_{x,p}}^2.
\]
This completes the proof.
\end{proof}

\begin{theorem}\label{thm:scaled-global}
Let $s\ge4$. There exists a sufficiently small constant $\delta_{\rm in}>0$ such that the following holds. For each $0<\epsilon\le1$, let $g^\epsilon_0\in H^s(\R^3\times\R^3)$ satisfy
\[
\sup_{0<\epsilon\le1}\|g^\epsilon_0\|_{H^s_{x,p}}\le\delta_{\rm in}
\]
and $f^\epsilon_0:=\calF_\hbar+\epsilon\sqrt{\mu_\hbar}\,g^\epsilon_0\ge0$
for almost every $(x,p)\in\R^3\times\R^3$. Then, the perturbative equation \eqref{eq:pert-eq} admits a unique global solution $g^\epsilon\in C\lt([0,\infty);H^s(\R^3\times\R^3)\rt)$. Moreover,
\bq\label{eq:global-bound}
\sup_{t\ge0}\|g^\epsilon(t)\|_{H^s_{x,p}}^2+\frac{1}{\epsilon^2}\int_0^\infty\|(I-P)g^\epsilon(t)\|_{D,s}^2\,\rdt +\int_0^\infty\|\nabla_x(a^\epsilon,b^\epsilon,c^\epsilon)(t)\|_{H^{s-1}_x}^2\,\rdt\le C_\hbar\|g^\epsilon_0\|_{H^s_{x,p}}^2.
\eq
The corresponding distribution function $f^\epsilon=\calF_\hbar+\epsilon\sqrt{\mu_\hbar}\,g^\epsilon$ remains nonnegative for every $t\ge0$. In the fermionic case $\kappa=-1$, if $f^\epsilon_0\le\frac{1}{\hbar}$ for almost every $(x,p)\in\R^3\times\R^3$, then
\[
0\le f^\epsilon(t,x,p)\le\frac{1}{\hbar}
\]
for almost every $(x,p)\in\R^3\times\R^3$ and every $t\ge0$.
\end{theorem}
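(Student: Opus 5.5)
The plan is the standard continuity argument: combine local well-posedness with the uniform a priori estimate of Proposition \ref{prop:unif-est}, and obtain positivity and the Pauli bound from a maximum principle for the unscaled equation.

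\textbf{Local existence.} For fixed $\epsilon\in(0,1]$, I would use the identity \eqref{eq:Gam-scale}. The function $G:=\epsilon g^\epsilon$ solves the perturbative form of the unscaled equation in the rescaled variables $(t/\epsilon^2,x/\epsilon)$. Equivalently, since $\epsilon$ is fixed, one can repeat the local theory of \cite{CHHpre} directly for \eqref{eq:pert-eq}. The construction is an iteration scheme
\[
\epsilon\pa_t g^{n+1}+p\cdot\nabla_x g^{n+1}-\frac{1}{\epsilon}Lg^{n+1}=\Gamma^\epsilon(g^n).
\]
The linear part gives an $H^s$ energy bound. The right-hand side is controlled by \eqref{eq:Gam-eps-est}, and the scheme contracts in $L^2_{x,p}$ by \eqref{eq:Gam-eps-diff}. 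Together these give a unique solution $g^\epsilon\in C([0,T_*];H^s)$ whenever $\|g_0^\epsilon\|_{H^s}\le\delta$ is small. Only the existence time $T_*$ may depend on $\epsilon$; the global bound is what must be uniform.

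\textbf{Continuity argument.} Let $C_\hbar$ be the constant of Proposition \ref{prop:unif-est}. Choose $\delta_{\rm in}$ so that $C_\hbar^2\delta_{\rm in}^2\le\frac14\delta^2$, where $\delta$ is the smallness threshold required both by that proposition and by the local theory. Define
\[
T^\sharp:=\sup\{T:\ \sup_{[0,T]}\|g^\epsilon\|_{H^s}\le\delta\}.
\]
On $[0,T^\sharp)$ I would integrate the differential inequality of Proposition \ref{prop:unif-est} in time and use the equivalence of $\calE^\epsilon_s$ with $\|g^\epsilon\|_{H^s}^2$. This yields
\[
\|g^\epsilon(t)\|_{H^s}^2+\lambda_\hbar C_\hbar\int_0^t\calD^\epsilon_s\,\rdt\le C_\hbar^2\|g^\epsilon_0\|_{H^s}^2\le\tfrac14\delta^2 .
\]
The solution therefore stays strictly inside the smallness ball. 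Restarting the local theory at times close to $T^\sharp$ shows $T^\sharp=\infty$, and the displayed inequality is \eqref{eq:global-bound}. Uniqueness follows from the $L^2$ difference estimate \eqref{eq:Gam-eps-diff} combined with \eqref{eq:coer-h} and Gronwall's lemma.

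Justifying the a priori estimate requires smooth solutions. I would handle this by mollifying the initial data, which keeps the smallness, and then passing to the limit using the uniform bounds.

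\textbf{Positivity and the Pauli bound.} Here I would return to the original variables. The function $f^\epsilon$ solves \eqref{eq:scaled-nqfp}, a linear degenerate-parabolic equation in $p$ once the coefficients $\rho^\epsilon,u^\epsilon,\Theta^\epsilon$ are frozen. These coefficients are bounded and smooth by Sobolev embedding ($s\ge4$), and $\rho^\epsilon,\Theta^\epsilon$ stay close to $M>0$ by the formulas of Subsection \ref{ssec:scaled-nl}. The drift $(p-u^\epsilon)f(1+\hbar\kappa f)$ has the form $f\cdot(\text{bounded coefficient})$.

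\begin{itemize}
\item For nonnegativity, I would test the equation against $f_-=\max(-f,0)$. The diffusion term has a good sign, and the drift and transport terms are bounded by $C\|f_-\|_{L^2}^2$ after integration by parts. Gronwall's lemma together with $f_-(0)=0$ then gives $f_-\equiv0$.
\item In the fermionic case $\kappa=-1$, I would observe that $h:=\frac1\hbar-f$ satisfies an equation of the same structure, because $f(1-\hbar f)=\hbar h(\frac1\hbar-h)$. The same argument applied to $h_-$ gives $f\le\frac1\hbar$.
\end{itemize}

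This is precisely the propagation argument of \cite{CHHpre}, and the scaling by $\epsilon$ only rescales time and space, so it transfers unchanged. The main obstacle is the continuation step: the local existence time and the regularization must be compatible with an $\epsilon$-uniform bound. The plan hinges on the fact that Proposition \ref{prop:unif-est} holds uniformly in $\epsilon$, so it is essential to check carefully that $C_\hbar$ and $\lambda_\hbar$ really are independent of $\epsilon$.
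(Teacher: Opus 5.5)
Your proposal follows the paper's route: adapt the local theory of \cite{CHHpre} to \eqref{eq:pert-eq}, close the continuation argument with the $\epsilon$-uniform estimate of Proposition \ref{prop:unif-est}, and propagate nonnegativity and the fermionic Pauli bound by the argument of \cite[Appendix A.2]{CHHpre}. Your write-up supplies details the paper only cites: mollification of the data, $L^2$ uniqueness via \eqref{eq:Gam-eps-diff}, and the energy estimates for $f_-$ and $(\frac1\hbar-f)_-$, where the unbounded drift is harmless because $\nabla_p\cdot(p-u^\epsilon)=3$ after integration by parts.

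One caution: the aside reducing to the unscaled problem in the variables $(t/\epsilon^2,x/\epsilon)$ is not quantitatively equivalent. Since $\|\epsilon g(\epsilon\,\cdot)\|_{L^2_x}=\epsilon^{-1/2}\|g\|_{L^2_x}$, the rescaling destroys the $\epsilon$-uniform $H^s$ smallness, so the direct adaptation to \eqref{eq:pert-eq}, which you also mention, is the one to use.
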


\begin{proof}
The local well-posedness argument is obtained by adapting \cite[Theorem 4.1]{CHHpre} to the rescaled equation \eqref{eq:pert-eq}. Proposition \ref{prop:unif-est} and the continuation argument of \cite[Section 6.1]{CHHpre} yield the global solution and the estimate \eqref{eq:global-bound}. The propagation of nonnegativity and the fermionic Pauli upper bound follows from the argument of \cite[Appendix A.2]{CHHpre}.
\end{proof}

The following microscopic relaxation estimate is an immediate consequence of \eqref{eq:global-bound}.

\begin{corollary}\label{cor:micro-relax}
For every $T>0$, we have
\bq\label{eq:micro-relax}
\|(I-P)g^\epsilon\|_{L^2(0,T;D,s)}\le C_\hbar\epsilon.
\eq
In particular, since $D$-norm controls the $L^2_p$-norm,
\[
(I-P)g^\epsilon\to0\quad\text{strongly in }L^2(0,T;H^s(\R^3\times\R^3))
\]
as $\epsilon\to0$.
\end{corollary}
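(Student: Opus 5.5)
The bound \eqref{eq:micro-relax} should follow directly from the global estimate \eqref{eq:global-bound} in Theorem \ref{thm:scaled-global}, using the uniform smallness of the initial data. Since the left-hand side of \eqref{eq:global-bound} is a sum of nonnegative terms, we may discard all but the microscopic dissipation term. This gives, for every $T>0$,
\[
\frac{1}{\epsilon^2}\int_0^T\|(I-P)g^\epsilon(t)\|_{D,s}^2\,\rdt\le\frac{1}{\epsilon^2}\int_0^\infty\|(I-P)g^\epsilon(t)\|_{D,s}^2\,\rdt\le C_\hbar\|g^\epsilon_0\|_{H^s_{x,p}}^2\le C_\hbar\delta_{\rm in}^2 .
\]
The last step uses the assumption $\sup_{0<\epsilon\le1}\|g^\epsilon_0\|_{H^s_{x,p}}\le\delta_{\rm in}$. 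Multiplying by $\epsilon^2$ and taking square roots yields $\|(I-P)g^\epsilon\|_{L^2(0,T;D,s)}\le C_\hbar^{1/2}\delta_{\rm in}\,\epsilon$. After renaming the constant, this is \eqref{eq:micro-relax}, with a constant independent of both $\epsilon$ and $T$.

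For the strong convergence, I would compare the $D$-norm with the $H^s_{x,p}$-norm. By definition,
\[
\|g\|_{D,s}^2=\sum_{|\alpha|+|\beta|\le s}\intrr\lt(|\nabla_p\partial^\alpha_x\partial^\beta_p g|^2+|p|^2\eta_\hbar^2|\partial^\alpha_x\partial^\beta_p g|^2\rt)\rdx\rdp .
\]
The weight $|p|^2\eta_\hbar^2$ degenerates near $p=0$, so $L^2_p$ control is not immediate. I would obtain it from a weighted Poincaré-type inequality: for each fixed $x$,
\[
\intr|h|^2\,\rdp\le C\intr\lt(|\nabla_p h|^2+|p|^2\eta_\hbar^2|h|^2\rt)\rdp .
\]

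To prove this, note that $\eta_\hbar=1+2\hbar\kappa\calF_\hbar$ is bounded below by a positive constant; this is clear for $\kappa=1$ and follows from \eqref{eq:h-cond} for $\kappa=-1$. Hence, on $\{|p|\ge1\}$, the weight term dominates $|h|^2$ directly. On the unit ball, I would control $\int_{|p|<1}|h|^2\,\rdp$ by the gradient plus the weighted integral over the annulus $1\le|p|\le2$, via a standard Poincaré–trace argument. This is the only step needing any care, and it is in any case the stated fact that the $D$-norm controls the $L^2_p$-norm.

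Applying the inequality to each $\partial^\alpha_x\partial^\beta_p(I-P)g^\epsilon$ and integrating in $x$ gives $\|(I-P)g^\epsilon\|_{H^s_{x,p}}\le C\|(I-P)g^\epsilon\|_{D,s}$. Consequently,
\[
\|(I-P)g^\epsilon\|_{L^2(0,T;H^s_{x,p})}\le C\epsilon\to0\quad\text{as }\epsilon\to0,
\]
which is the claimed strong convergence.
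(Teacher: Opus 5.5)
Your proposal is correct and matches the paper, which obtains \eqref{eq:micro-relax} as an immediate consequence of \eqref{eq:global-bound} by discarding the other nonnegative terms and using $\sup_{\epsilon}\|g^\epsilon_0\|_{H^s_{x,p}}\le\delta_{\rm in}$, and then invokes the fact that the $D$-norm controls the $L^2_p$-norm to get the strong convergence. Your justification of that last fact is sound: the positive lower bound on $\eta_\hbar$ is exactly where \eqref{eq:h-cond} enters for $\kappa=-1$, and the local Poincar\'e argument could equally be replaced by the harmonic-oscillator inequality $\int_{\R^3}(|\nabla_p h|^2+|p|^2|h|^2)\,\rdp\ge 3\int_{\R^3}|h|^2\,\rdp$.
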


%
%
%
%
%
%
%
%
%
%
\section{Compactness and macroscopic constraints}\label{sec:compact}
%
%
%
%
%
%
%
%
%
%

\subsection{Identification of the infinitesimal quantum equilibrium} 

Let $T>0$ be fixed. By the uniform global estimate \eqref{eq:global-bound}, the family $\{g^\epsilon\}_{0<\epsilon\le1}$ is uniformly bounded in $L^\infty(0,T;H^s(\R^3\times\R^3))$. Hence, there exists $g\in L^\infty(0,T;H^s(\R^3\times\R^3))$ such that, up to a subsequence,
\bq\label{eq:g-wk}
g^\epsilon\overset{\star}{\rightharpoonup}g\quad\text{weakly-$\star$ in }L^\infty(0,T;H^s(\R^3\times\R^3)).
\eq
Throughout the remainder of the proof, we do not relabel subsequences.

On the other hand, Corollary \ref{cor:micro-relax} gives
\bq\label{eq:micro-zero}
(I-P)g^\epsilon\to0\quad\text{strongly in }L^2(0,T;H^s(\R^3\times\R^3)).
\eq
Since $P$ is a bounded finite-rank operator in the velocity variable, we have
\[
P g^\epsilon\overset{\star}{\rightharpoonup}P g\quad\text{weakly-$\star$ in }L^\infty(0,T;H^s(\R^3\times\R^3)).
\]
Combining this convergence with \eqref{eq:g-wk} and \eqref{eq:micro-zero}, we obtain $g=P g$. Thus, the limiting perturbation belongs to the null space $\calN=\ker L $.

We recall the hydrodynamic moments
\bq\label{eq:hydro-mom}
\vrh^\epsilon=\lal g^\epsilon,\sqrt{\mu_\hbar}\ral_{L^2_p},\quad \mfu^\epsilon=\lal g^\epsilon,p\sqrt{\mu_\hbar}\ral_{L^2_p},\quad \vth^\epsilon=\lt\lal g^\epsilon,\lt(\frac{|p|^2}{\phi }-1\rt)\sqrt{\mu_\hbar}\rt\ral_{L^2_p}.
\eq
Note that
\[
m_0\beta=\intr \lt(\frac{|p|^2}{\phi }-1\rt)^2\mu_\hbar\,\rdp.
\]
Recall from \eqref{eq:mom-h} that
\[
m_0 =\intr \mu_\hbar\,\rdp,\quad m_2 =\frac{1}{3}\intr |p|^2\mu_\hbar\,\rdp,\quad \phi =\frac{3m_2 }{m_0 }.
\]
Since $\mu_\hbar$ is radially symmetric, the functions
\[
\sqrt{\mu_\hbar},\quad p_1\sqrt{\mu_\hbar},\quad p_2\sqrt{\mu_\hbar},\quad p_3\sqrt{\mu_\hbar},\quad \lt(\frac{|p|^2}{\phi }-1\rt)\sqrt{\mu_\hbar}
\]
are mutually orthogonal in $L^2_p$. Hence, the macroscopic projection can be written as
\bq\label{eq:Pg-hydro}
P g^\epsilon=\lt\{\frac{\vrh^\epsilon}{m_0 }+\frac{\mfu^\epsilon\cdot p}{m_2 }+\frac{\vth^\epsilon}{m_0\beta}\lt(\frac{|p|^2}{\phi }-1\rt)\rt\}\sqrt{\mu_\hbar}.
\eq

The bound \eqref{eq:global-bound} implies that
\bq\label{eq:hydro-bound}
\sup_{0<\epsilon\le1}\|(\vrh^\epsilon,\mfu^\epsilon,\vth^\epsilon)\|_{L^\infty(0,T;H^s_x)}\le C_\hbar
\sup_{0<\epsilon\le1}
\|
g^\epsilon_0
\|_{H^s_{x,p}}.
\eq
Thus, there exist $(\vrh,\mfu,\vth)\in L^\infty (0,T;H^s(\R^3))$ such that, up to a subsequence,
\bq\label{eq:hydro-wk}
(\vrh^\epsilon,\mfu^\epsilon,\vth^\epsilon)\overset{\star}{\rightharpoonup}(\vrh,\mfu,\vth)\quad\text{weakly-$\star$ in }L^\infty(0,T;H^s(\R^3)).
\eq
Passing to the limit in \eqref{eq:Pg-hydro}, we conclude that the limiting perturbation has the form
\bq\label{eq:inf-qe}
g(t,x,p)=\lt\{\frac{\vrh(t,x)}{m_0 }+\frac{\mfu(t,x)\cdot p}{m_2 }+\frac{\vth(t,x)}{m_0\beta}\lt(\frac{|p|^2}{\phi }-1\rt)\rt\}\sqrt{\mu_\hbar(p)}.
\eq
Hence, the limit is an infinitesimal quantum equilibrium associated with the fixed global equilibrium $\calF_\hbar$.

%
%
%
%
%
%
%
%
%
%
\subsection{Local conservation laws}

We next derive the local conservation laws satisfied by the hydrodynamic moments introduced in \eqref{eq:hydro-mom}. We first record several identities for the velocity moments of the quantum equilibrium. Using integration by parts and the radial symmetry of $\calF_\hbar$, we obtain
\[
m_2 =\frac{1}{3}\intr |p|^2\mu_\hbar\,\rdp=\intr \calF_\hbar\,\rdp=M .
\]
We also recall
\bq\label{eq:psi-h}
\psi =\frac{\displaystyle\frac{1}{3}\intr |p|^4\mu_\hbar\,\rdp}{\displaystyle\frac{1}{3}\intr |p|^2\mu_\hbar\,\rdp}=\frac{1}{3M }\intr |p|^4\mu_\hbar\,\rdp.
\eq
For later use, we introduce the constants
\bq\label{eq:alpha-beta}
\alpha:=\frac{\phi }{3}, \quad \beta :=\frac{\psi }{\phi }-1.
\eq

We define the tensor-valued and vector-valued functions
\bq\label{eq:AB-h}
A :=\lt(p\otimes p-\frac{|p|^2}{3}\mathbb I_3\rt)\sqrt{\mu_\hbar},
\quad
B :=\frac{1}{2}p\lt(|p|^2-\psi \rt)\sqrt{\mu_\hbar}.
\eq
By radial symmetry and the definition of $\psi $, we have
\bq\label{eq:AB-micro}
P A =0, \quad P B =0.
\eq
Indeed, $A$ is orthogonal to the scalar modes by its traceless structure, and it is orthogonal to the momentum modes since the corresponding integrands are odd functions of $p$. Similarly, $B$ is orthogonal to the scalar modes because the corresponding integrands are odd functions of $p$, while
\[
\intr p_i p_j\lt(|p|^2-\psi \rt)\mu_\hbar\,\rdp=0
\]
for every $1\le i,j\le3$ by radial symmetry and \eqref{eq:psi-h}.

The collision operator in \eqref{eq:scaled-nqfp} preserves mass, momentum, and kinetic energy; see \cite[Section 2.3]{CHHpre}. Taking the velocity moments of \eqref{eq:scaled-nqfp} against $1$, $p$, and $|p|^2$, respectively, and using the perturbation ansatz \eqref{eq:pert-ansatz}, we obtain the local conservation laws
\bq\label{eq:cons-f}
\begin{cases}
\displaystyle \epsilon\pa_t\intr g^\epsilon\sqrt{\mu_\hbar}\,\rdp+\nabla_x\cdot\intr pg^\epsilon\sqrt{\mu_\hbar}\,\rdp=0,\\[6pt]
\displaystyle \epsilon\pa_t\intr pg^\epsilon\sqrt{\mu_\hbar}\,\rdp+\nabla_x\cdot\intr p\otimes p\,g^\epsilon\sqrt{\mu_\hbar}\,\rdp=0,\\[6pt]
\displaystyle \epsilon\pa_t\intr |p|^2g^\epsilon\sqrt{\mu_\hbar}\,\rdp+\nabla_x\cdot\intr p|p|^2g^\epsilon\sqrt{\mu_\hbar}\,\rdp=0.
\end{cases}
\eq

We now rewrite these identities in terms of the hydrodynamic moments. From the definition of $\vth^\epsilon$ in \eqref{eq:hydro-mom}, we have
\[
\frac{1}{\phi }\intr |p|^2g^\epsilon\sqrt{\mu_\hbar}\,\rdp=\vrh^\epsilon+\vth^\epsilon.
\]
Using the definition of $A $, we obtain
\[
\intr p\otimes p\,g^\epsilon\sqrt{\mu_\hbar}\,\rdp=\alpha\lt(\vrh^\epsilon+\vth^\epsilon\rt)\mathbb I_3+\lal g^\epsilon,A \ral_{L^2_p}.
\]
Similarly, the definition of $B $ gives
\bq\label{eq:heat-dec}
\intr p\lt(\frac{|p|^2}{\phi }-1\rt)g^\epsilon\sqrt{\mu_\hbar}\,\rdp=\beta \mfu^\epsilon+\frac{2}{\phi }\lal g^\epsilon,B \ral_{L^2_p}.
\eq

Combining \eqref{eq:cons-f}--\eqref{eq:heat-dec}, we arrive at
\bq\label{eq:loc-bal}
\begin{cases}
\displaystyle \epsilon\pa_t\vrh^\epsilon+\nabla_x\cdot\mfu^\epsilon=0,\\[6pt]
\displaystyle \epsilon\pa_t\mfu^\epsilon+\alpha\nabla_x\lt(\vrh^\epsilon+\vth^\epsilon\rt)+\nabla_x\cdot\lal g^\epsilon,A \ral_{L^2_p}=0,\\[6pt]
\displaystyle \epsilon\pa_t\vth^\epsilon+\beta \nabla_x\cdot\mfu^\epsilon+\frac{2}{\phi }\nabla_x\cdot\lal g^\epsilon,B \ral_{L^2_p}=0.
\end{cases}
\eq
Since $A ,B \in\calN^\perp$, the flux terms depend only on the microscopic component:
\bq\label{eq:flux-micro}
\lal g^\epsilon,A \ral_{L^2_p}=\lal(I-P)g^\epsilon,A \ral_{L^2_p},
\quad
\lal g^\epsilon,B \ral_{L^2_p}=\lal(I-P)g^\epsilon,B \ral_{L^2_p}.
\eq

\begin{remark}
In the classical limit $\hbar\to0$, we have
\[
\phi \to3, \quad \psi \to5, \quad \alpha\to1, \quad \beta \to\frac{2}{3}.
\]
Thus, \eqref{eq:loc-bal} reduces formally to the familiar local conservation laws associated with the classical incompressible Navier--Stokes--Fourier limit.
\end{remark}

%
%
%
%
%
%
%
%
%
%
\subsection{Incompressibility and the Boussinesq relation}

We now identify the constraints satisfied by the limiting macroscopic variables. The key point is that the flux functions $A $ and $B $ defined in \eqref{eq:AB-h} belong exactly to the microscopic space $\calN^\perp$. Hence, no semiclassical approximation is needed in this step.

We first consider the mass equation in \eqref{eq:loc-bal}:
\[
\epsilon\pa_t\vrh^\epsilon+\nabla_x\cdot\mfu^\epsilon=0.
\]
Since $\vrh^\epsilon$ is uniformly bounded in $L^\infty(0,T;H^s_x)$, we have $\nabla_x\cdot\mfu^\epsilon=-\epsilon\pa_t\vrh^\epsilon\to 0$ in the sense of distributions on $(0,T)\times\R^3$. Passing to the limit by using \eqref{eq:hydro-wk}, we obtain
\bq\label{eq:inc}
\nabla_x\cdot\mfu=0.
\eq

We next turn to the momentum equation in \eqref{eq:loc-bal}:
\[
\epsilon\pa_t\mfu^\epsilon+\alpha\nabla_x\lt(\vrh^\epsilon+\vth^\epsilon\rt)+\nabla_x\cdot\lal g^\epsilon,A \ral_{L^2_p}=0.
\]
Since $A \in\calN^\perp$, the flux term can be written as $\lal g^\epsilon,A \ral_{L^2_p}=\lal(I-P)g^\epsilon,A \ral_{L^2_p}$. The microscopic relaxation estimate \eqref{eq:micro-relax} thus yields
\bq\label{eq:A-flux-zero}
\|\lal g^\epsilon,A \ral_{L^2_p}\|_{L^2(0,T;H^s_x)}\le C_{\hbar,T}\epsilon.
\eq
Moreover,
\bq\label{eq:eps-dt-u}
\epsilon\pa_t\mfu^\epsilon\to0
\eq
in the sense of distributions on $(0,T)\times\R^3$, since $\mfu^\epsilon$ is uniformly bounded in $L^\infty(0,T;H^s(\R^3))$.

It follows from \eqref{eq:A-flux-zero} and \eqref{eq:eps-dt-u} that $\nabla_x\lt(\vrh^\epsilon+\vth^\epsilon\rt)\to0$ in the sense of distributions on $(0,T)\times\R^3$, due to $\alpha=\frac{\phi }{3}>0$. Passing to the limit by using \eqref{eq:hydro-wk}, we obtain $\nabla_x\lt(\vrh+\vth\rt)=0$. Since $\vrh+\vth\in L^\infty(0,T;H^s(\R^3))\subset L^\infty(0,T;L^2(\R^3))$, the spatially constant function $\vrh+\vth$ must vanish. Hence,
\bq\label{eq:bous}
\vrh+\vth=0.
\eq

Combining \eqref{eq:inf-qe} and \eqref{eq:bous}, the limiting infinitesimal quantum equilibrium can be rewritten as
\[
g(t,x,p)=\lt\{\frac{\mfu(t,x)\cdot p}{m_2 }+\vth(t,x)\lt[\frac{1}{m_0\beta}\lt(\frac{|p|^2}{\phi }-1\rt)-\frac{1}{m_0 }\rt]\rt\}\sqrt{\mu_\hbar(p)}.
\]
%
%
%
%
%
%
%
%
%
%
\subsection{Strong compactness of the thermal and solenoidal modes}

We next establish the strong compactness required to pass to the nonlinear terms in the limiting equations. Since the quantum parameter $\hbar>0$ is fixed, the relevant thermal variable is a quantum-dependent linear combination of $\vrh^\epsilon$ and $\vth^\epsilon$.

Recall from \eqref{eq:alpha-beta} that
\[
\beta =\frac{\psi }{\phi }-1.
\]
By the Cauchy--Schwarz inequality,
\[
\lt(\intr |p|^2\mu_\hbar\,\rdp\rt)^2<\lt(\intr \mu_\hbar\,\rdp\rt)\lt(\intr |p|^4\mu_\hbar\,\rdp\rt),
\]
and thus $\beta >0$. We also recall
\[
q^\epsilon=\frac{\vth^\epsilon-\beta \vrh^\epsilon}{1+\beta }.
\]
In view of the Boussinesq relation \eqref{eq:bous}, the weak limit of $q^\epsilon$ is precisely $\vth$.

\medskip
\noindent
\textbf{Strong compactness of the thermal mode.}
Combining the first and third equations of \eqref{eq:loc-bal}, we obtain
\bq\label{eq:q-eq}
\pa_t q^\epsilon+\frac{2}{(1+\beta )\phi \epsilon}\nabla_x\cdot\lal g^\epsilon,B \ral_{L^2_p}=0.
\eq
Since $B \in\calN^\perp$, we have
\[
\lal g^\epsilon,B \ral_{L^2_p}=\lal(I-P)g^\epsilon,B \ral_{L^2_p}.
\]
The exponential decay of $B $ and the microscopic relaxation estimate \eqref{eq:micro-relax} therefore yield
\[
\|\pa_t q^\epsilon\|_{L^2(0,T;H^{s-1}_x)}\le\frac{C_\hbar}{\epsilon}\|(I-P)g^\epsilon\|_{L^2(0,T;D,s)}\le C_{\hbar,T}.
\]
Moreover, by \eqref{eq:hydro-bound},
\bq\label{eq:q-bound}
\sup_{0<\epsilon\le1}\|q^\epsilon\|_{L^\infty(0,T;H^s_x)}\le C_{\hbar,T}.
\eq

Let $K\subset\R^3$ be compact. Since $H^s(K)\Subset H^{s-1-\zeta}(K)\hookrightarrow H^{s-2}(K)$ for every $0<\zeta<1$, and since $\{q^\epsilon\}_{0<\epsilon\le1}$ is bounded in $L^\infty(0,T;H^s(K))$, while $\{\pa_t q^\epsilon\}_{0<\epsilon\le1}$ is bounded in $L^2(0,T;H^{s-1}(K)) \hookrightarrow L^2(0,T;H^{s-2}(K))$, the Aubin--Lions compactness lemma implies that $\{q^\epsilon\}_{0<\epsilon\le1}$ is relatively compact in $C([0,T];H^{s-1-\zeta}(K))$. By a diagonal argument over an increasing sequence of compact subsets of
$\R^3$, we obtain, up to a subsequence,
\[
q^\epsilon\to q \quad\text{strongly in } C([0,T];H^{s-1-\zeta}_{\rm loc}(\R^3))
\]
for every $0<\zeta<1$. By interpolation with the uniform bound \eqref{eq:q-bound}, we also obtain
\[
q^\epsilon\to q \quad\text{strongly in } L^\infty(0,T;H^{s-\zeta}_{\rm loc}(\R^3))
\]
for every $0<\zeta<1$, after replacing $\zeta$ in the preceding compactness statement by a smaller positive number if necessary.

On the other hand, the weak-$\star$ convergence \eqref{eq:hydro-wk} gives
\[
q=\frac{\vth-\beta \vrh}{1+\beta }.
\]
Using the Boussinesq relation \eqref{eq:bous}, we conclude that $q=\vth$. Hence,
\bq\label{eq:theta-strong}
q^\epsilon\to\vth\quad\text{strongly in } C([0,T];H^{s-1-\zeta}_{\rm loc}(\R^3))\cap L^\infty(0,T;H^{s-\zeta}_{\rm loc}(\R^3))
\eq
for every $0<\zeta<1$.

\medskip

\noindent
\textbf{Strong compactness of the solenoidal velocity.}
Applying the Leray projection $\mP$ to the second equation of \eqref{eq:loc-bal}, we obtain
\[
\pa_t\mP\mfu^\epsilon+\frac{1}{\epsilon}\mP\nabla_x\cdot\lal g^\epsilon,A \ral_{L^2_p}=0.
\]
Here, $\mP$ is the Leray projection, $\mP v=v-\nabla_x(-\Delta_x)^{-1}\nabla_x\cdot v$ and we used $\mP\nabla_x\lt(\vrh^\epsilon+\vth^\epsilon\rt)=0$. Since $A \in\calN^\perp$, we have $\lal g^\epsilon,A \ral_{L^2_p}=\lal(I-P)g^\epsilon,A \ral_{L^2_p}$. Hence, the exponential decay of $A $ and \eqref{eq:micro-relax} imply
\[
\|\pa_t\mP\mfu^\epsilon\|_{L^2(0,T;H^{s-1}_x)}\le\frac{C_\hbar}{\epsilon}\|(I-P)g^\epsilon\|_{L^2(0,T;D,s)}\le C_{\hbar,T}.
\]
Moreover,
\[
\sup_{0<\epsilon\le1}\|\mP\mfu^\epsilon\|_{L^\infty(0,T;H^s_x)}\le C_{\hbar,T}.
\]
Applying the Aubin--Lions compactness lemma once again, we obtain
\[
\mP\mfu^\epsilon\to\widetilde{\mfu}\quad\text{strongly in } C([0,T];H^{s-1-\zeta}_{\rm loc}(\R^3))\cap L^\infty(0,T;H^{s-\zeta}_{\rm loc}(\R^3))
\]
for every $0<\zeta<1$.

Since
\[
\mfu^\epsilon\overset{\star}{\rightharpoonup}\mfu\quad\text{weakly-$\star$ in }L^\infty(0,T;H^s_x)
\]
and $\nabla_x\cdot\mfu=0$ by \eqref{eq:inc}, we have $\mP\mfu=\mfu$. Thus, $\widetilde{\mfu}=\mfu$,
and hence
\bq\label{eq:u-strong}
\mP\mfu^\epsilon\to\mfu\quad\text{strongly in } C([0,T];H^{s-1-\zeta}_{\rm loc}(\R^3))\cap L^\infty(0,T;H^{s-\zeta}_{\rm loc}(\R^3))
\eq
for every $0<\zeta<1$.

%
%
%
%
%
%
%
%
%
%
\section{Derivation of the incompressible Navier--Stokes--Fourier system}\label{sec:NSF}
%
%
%
%
%
%
%
%
%
%
\subsection{Auxiliary microscopic equations and constitutive fluxes}

We now identify the constitutive relations for the viscous stress tensor and the heat flux. This step is the analogue of the Chapman--Enskog identification of dissipative fluxes for the Boltzmann and Landau equations. In the present quantum Fokker--Planck model, however, the computation has to be carried out in a form adapted to the self-consistent fields $\rho_f$, $u_f$, and $\Theta_f$. In particular, the quadratic macroscopic contribution is generated by the scaled nonlinear operator and has to be separated from the transport contribution. We first introduce the microscopic auxiliary equations that determine the dissipative coefficients and then state the corresponding flux expansions. The detailed computation of the transport and quadratic contributions is postponed to Section \ref{ssec:flux-comp}.

Since the quantum parameter $\hbar>0$ is fixed, the flux functions $A $ and $B $ are not treated as perturbations of their classical counterparts. Instead, we use the inverse of the linearized collision operator on the microscopic space $\calN^\perp$. Recall from \eqref{eq:AB-h} that
\[
A =\lt(p\otimes p-\frac{|p|^2}{3}\mathbb I_3\rt)\sqrt{\mu_\hbar}, \quad B =\frac{1}{2}p\lt(|p|^2-\psi \rt)\sqrt{\mu_\hbar}.
\]
By \eqref{eq:AB-micro}, we have $A ,B \in\calN^\perp$. We introduce the microscopic Hilbert space
\[
\mathbb{H} := \overline{ \calN^\perp \cap H^1(\R^3_p) }^{\,|\cdot|_{D }},
\]
where $|g|_{D }^2 = \|\nabla_pg\|_{L^2_p}^2 + \|p\eta_\hbar g\|_{L^2_p}^2$. Recall that the $D $-norm controls the $L^2_p$-norm. Thus, $\mathbb{H}$ is continuously embedded in $L^2_p$. Since $\calN^\perp$ is closed in $L^2_p$, we identify $\mathbb{H}$ with a subspace of $\calN^\perp$. Indeed, if $g_n\in\calN^\perp\cap H^1(\R^3_p)$ converges to $g$ in the $D$-norm, then $g_n\to g$ in $L^2_p$, and hence
\[
\lal g,\chi\ral_{L^2_p} = \lim_{n\to\infty}\lal g_n,\chi\ral_{L^2_p} =0
\]
for every $\chi\in\calN$.

For vector- and tensor-valued functions, the $L^2_p$ inner product is understood componentwise. In particular, for tensor-valued functions,
\[
\lal X,Y\ral_{L^2_p}:=\sum_{i,j=1}^3\intr X_{ij}(p)Y_{ij}(p)\,\rdp.
\]
The $D$-norm is also understood componentwise. For scalar functions $f,g\in\calN^\perp\cap H^1(\R^3_p)$, we set
\[
\mathfrak{B}(f,g):=-\lal Lf,g\ral_{L^2_p}.
\]
For vector- and tensor-valued functions, we use the corresponding componentwise extensions, still denoted by $\mathfrak{B}$.

The next lemma gives the solvability of the microscopic auxiliary equations associated with the stress tensor and the heat flux. Analogous constructions for the Boltzmann and Landau equations can be found, for instance, in \cite{GS04,Rac21,JXZ22}; see also \cite{GL24} for a unified spectral approach. We include the solvability argument for the present linearized operator.

\begin{lemma}\label{lem:AB-solv}
There exist unique functions $\widetilde A \in \mathbb{H}^{3\times3}$, $\widetilde B \in \mathbb{H}^3$ such that
\bq\label{eq:AB-aux}
L\widetilde A = A, \quad L\widetilde B = B
\eq
in the weak sense. More precisely, after extending $\mathfrak{B}$ continuously to $\mathbb{H}$, the identities in \eqref{eq:AB-aux} mean that
\bq\label{eq:AB-weak}
\mathfrak{B}(\widetilde A,\Phi)=-\lal A,\Phi\ral_{L^2_p}
\quad\text{for all } \Phi\in\mathbb{H}^{3\times3},
\quad
\mathfrak{B}(\widetilde B,\Psi)=-\lal B,\Psi\ral_{L^2_p}
\quad\text{for all } \Psi\in\mathbb{H}^{3}.
\eq

Moreover, there exists a constant $C_\hbar>0$ such that
\bq\label{eq:AB-D}
| \widetilde A |_D + | \widetilde B |_D \le C_\hbar,
\eq
and
\bq \label{eq:AB-L2}
\nabla_p\widetilde A, \; \nabla_p\widetilde B, \; p\widetilde A, \; p\widetilde B \in L^2_p.
\eq

There exist radial scalar functions $\mathfrak a=\mathfrak a(|p|)$ and $\mathfrak b=\mathfrak b(|p|)$
such that
\[
\widetilde A(p) = \mathfrak a(|p|) \lt( p\otimes p - \frac{|p|^2}{3} \mathbb I_3 \rt),\quad \widetilde B(p) = \mathfrak b(|p|) p.
\]
Finally,
\[
\lal \widetilde A, A \ral_{L^2_p} < 0, \quad \lal \widetilde B, B \ral_{L^2_p} < 0.
\]
\end{lemma}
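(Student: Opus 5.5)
The plan is a Lax--Milgram argument on $\mathbb{H}$, followed by a symmetry argument for the structure and a sign computation.

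\textbf{Coercivity and solvability.} By \eqref{eq:coer-h}, for $g\in\calN^\perp\cap H^1(\R^3_p)$ we have $(I-P)g=g$, so $\mathfrak{B}(g,g)\ge\lambda_\hbar|g|_D^2$. For boundedness, write $-\lal Lf,g\ral_{L^2_p}$ after one integration by parts in $p$, using the explicit form of $L$. The local term is
\[
M\int(\nabla_p f+\tfrac12 p\eta_\hbar f)\cdot(\nabla_p g+\tfrac12 p\eta_\hbar g-\cdots)\,\rdp,
\]
which is bounded by $C|f|_D|g|_D$. Each of the two rank-one nonlocal terms is a product of moments, $\int(|p|^2\eta_\hbar-3)f\sqrt{\mu_\hbar}$ or $\int p\eta_\hbar f\sqrt{\mu_\hbar}$, against $\int \nabla_p(g/\sqrt{\mu_\hbar})\cdot p\mu_\hbar$-type moments. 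These are bounded by $L^2_p$ norms because $\mu_\hbar$ has exponential decay. Hence $\mathfrak{B}$ extends to a bounded coercive bilinear form on $\mathbb{H}$.

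The functionals $\Phi\mapsto-\lal A,\Phi\ral_{L^2_p}$ and $\Psi\mapsto -\lal B,\Psi\ral_{L^2_p}$ are continuous on $\mathbb{H}$, since $|\cdot|_D$ controls $L^2_p$ and $A,B\in L^2_p$. Lax--Milgram, applied componentwise, then gives unique $\widetilde A,\widetilde B$ satisfying \eqref{eq:AB-weak}, with $|\widetilde A|_D\le\lambda_\hbar^{-1}\|A\|_{L^2_p}$, which is \eqref{eq:AB-D}. Since $\eta_\hbar\ge c>0$ (note $\eta_\hbar=\mu_\hbar/\calF_\hbar\cdot$ something positive; in the fermionic case use $1-2\hbar\calF_\hbar>0$, which follows from \eqref{eq:h-cond}), the $D$-norm controls $\|\nabla_p\cdot\|_{L^2_p}$ and $\|p\,\cdot\|_{L^2_p}$. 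This gives \eqref{eq:AB-L2}.

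I need to check that the weak formulation tested only on $\mathbb{H}$ is genuinely $L\widetilde A=A$. This holds because $L$ maps into $\calN^\perp$ ($L$ is self-adjoint with kernel $\calN$) and $A\in\calN^\perp$, so testing against $\calN$ is trivial.

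\textbf{Structure via rotation invariance.} For $R\in O(3)$, the operator $L$ commutes with $g\mapsto g(R\,\cdot)$, because $\mu_\hbar$ and $\eta_\hbar$ are radial and the nonlocal terms are equivariant. The data satisfy $A(Rp)=RA(p)R^T$ and $B(Rp)=RB(p)$. By uniqueness, $\widetilde A(Rp)=R\widetilde A(p)R^T$ and $\widetilde B(Rp)=R\widetilde B(p)$. The standard representation argument then applies. An equivariant vector field is $\mathfrak b(|p|)p$: fix $p=re_1$ and use rotations and reflections fixing $e_1$. An equivariant symmetric traceless matrix field is $\mathfrak a_1(|p|)(p\otimes p-\frac{|p|^2}{3}\mathbb I_3)$ plus possibly $\mathfrak a_2(|p|)\mathbb I_3$; the latter vanishes because tracelessness is preserved. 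Tracelessness holds because the trace of $\widetilde A$ solves $L(\tr\widetilde A)=\tr A=0$ in $\mathbb{H}$, so it is zero by uniqueness. Symmetry holds similarly by uniqueness. Alternatively, one can search directly in the radial ansatz class, which is invariant under $L$, and invoke uniqueness.

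\textbf{Sign.} Taking $\Phi=\widetilde A$ gives $\lal A,\widetilde A\ral=-\mathfrak{B}(\widetilde A,\widetilde A)\le-\lambda_\hbar|\widetilde A|_D^2$. This is strictly negative unless $\widetilde A=0$, which would force $A=L\widetilde A=0$, a contradiction. The same argument applies to $B$.

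\textbf{Main obstacle.} I expect the delicate points to be the boundedness of the nonlocal parts of $\mathfrak{B}$ in the $D$-norm, and the justification that the weak solution in $\mathbb{H}$ inherits the rotational equivariance. The latter requires checking that the rotated function still lies in $\mathbb{H}$, i.e. that $\calN^\perp$ and the $D$-norm are rotation invariant, which they are.
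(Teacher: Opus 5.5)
Your proposal is correct and follows essentially the same route as the paper. Both arguments use Lax--Milgram on $\mathbb{H}$, with coercivity from \eqref{eq:coer-h} and boundedness of the finite-rank corrections; the positive lower bound on $\eta_\hbar$ for \eqref{eq:AB-L2}; rotational equivariance of $L$ plus uniqueness (with symmetry and tracelessness of $\widetilde A$ also coming from uniqueness) for the radial structure; and testing the weak formulation with the solution itself for the strict sign. Your use of $O(3)$ instead of $SO(3)$, and your explicit check that $\mathbb{H}$ and the $D$-norm are rotation invariant, are harmless refinements of details the paper leaves implicit.
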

 
\begin{proof}
We only prove the assertions for $\widetilde A$, since the argument for
$\widetilde B$ is identical. By the explicit formula for $L$ and the fact that its correction terms are finite-dimensional, there exists a constant $C_\hbar>0$ such that $\lt| \mathfrak{B}(f,g) \rt| \le C_\hbar |f|_D |g|_D$ for all $f,g\in\calN^\perp\cap H^1(\R^3_p)$. Hence, by density, $\mathfrak{B}$ extends continuously to $\mathbb{H}\times\mathbb{H}$. The same notation will be used for its componentwise extension to $\mathbb{H}^{3\times3}$. On the other hand, the microscopic coercivity estimate \eqref{eq:coer-h} gives
\bq\label{eq:B-coer}
\mathfrak{B}(f,f) \ge \lambda_\hbar |f|_D^2
\eq
for all $f\in\mathbb{H}$. Thus, the componentwise extension satisfies $\mathfrak{B}(X,X)\ge\lambda_\hbar |X|_D^2$ for all $X\in\mathbb{H}^{3\times3}$. Since $A\in L^2_p(\R^3;\R^{3\times3})$ and the $D$-norm controls the $L^2_p$-norm on $\mathbb{H}$, the linear functional $\ell_A(\Phi):=-\lal A,\Phi\ral_{L^2_p}$ is bounded on $\mathbb{H}^{3\times3}$. Indeed,
\[
|\ell_A(\Phi)| \le \|A\|_{L^2_p} \|\Phi\|_{L^2_p} \le C_\hbar \|A\|_{L^2_p} |\Phi|_D.
\]
The Lax--Milgram theorem hence yields a unique $\widetilde A\in\mathbb{H}^{3\times3}$ such that $\mathfrak{B}(\widetilde A,\Phi)=\ell_A(\Phi)$ for every $\Phi\in\mathbb{H}^{3\times3}$. This is exactly the weak formulation of $L\widetilde A=A$ in \eqref{eq:AB-weak}. Taking $\Phi=\widetilde A$ and using \eqref{eq:B-coer}, we obtain
\[
\lambda_\hbar |\widetilde A|_D^2 \le \mathfrak{B}(\widetilde A,\widetilde A) = -\lal A,\widetilde A\ral_{L^2_p} \le C_\hbar \|A\|_{L^2_p}|\widetilde A|_D.
\]
This gives the estimate for $\widetilde A$ in \eqref{eq:AB-D}. Since $\eta_\hbar$ is bounded from below by a positive constant, the definition of the $D$-norm implies $\nabla_p \widetilde A \in L^2_p$, $p\widetilde A\in L^2_p$. This proves the corresponding assertions in \eqref{eq:AB-L2}. The estimates for $\widetilde B$ follow in the same way, with the bounded linear functional $\Psi\mapsto-\lal B,\Psi\ral_{L^2_p}$ on $\mathbb{H}^3$.

We next establish the rotational structure. For $R\in SO(3)$, define the rotated tensor field $\widetilde A_R(p):=R^T\widetilde A(Rp)R$. The operator $L$ is equivariant under rotations. Indeed, the coefficients $\mu_\hbar$ and $\eta_\hbar$ are radial, and the finite-dimensional correction terms in $L$ are expressed through scalar and vector velocity moments, which transform naturally under rotations. Hence, $L\widetilde A_R(p)=R^T(L\widetilde A)(Rp)R$. Since $A(Rp)=R A(p)R^T$, we obtain $L\widetilde A_R(p)=R^T A(Rp)R=A(p)$. Thus, $\widetilde A_R$ is another weak solution of $LX=A$. By uniqueness, $\widetilde A_R=\widetilde A$, that is, 
\bq\label{eq:rot-cov}
\widetilde A(Rp)=R\widetilde A(p)R^T \quad \text{for every } R\in SO(3),
\eq
Since $A$ is symmetric and traceless, the uniqueness of the weak solution also implies that $\widetilde A$ is symmetric and traceless. Indeed, $A-A^T=0$, $\tr A=0$, and hence $L(\widetilde A-\widetilde A^T)=0$, $L(\tr \widetilde A)=0$. Since $\widetilde A-\widetilde A^T\in\mathbb H^{3\times3}$ and $\operatorname{tr}\widetilde A\in\mathbb H$, the coercivity of $L$ gives $\widetilde A-\widetilde A^T=0$, $\tr \widetilde A=0$. Together with the rotational covariance \eqref{eq:rot-cov}, this implies that $\widetilde A$ belongs to the symmetric traceless tensor sector. Hence, there exists a radial scalar function $\mathfrak a=\mathfrak a(|p|)$ such that
\[
\widetilde A(p)=\mathfrak a(|p|)\lt( p\otimes p-\frac{|p|^2}{3}\mathbb I_3\rt).
\]
Similarly, since $B(Rp)=R B(p)$, the uniqueness of the weak solution to $L\widetilde B=B$ implies that there exists a radial scalar function $\mathfrak b=\mathfrak b(|p|)$ such that $\widetilde B(p)=\mathfrak b(|p|)p$.

Finally, taking $\Phi=\widetilde A$ in the weak formulation gives
\[
\lal \widetilde A,A\ral_{L^2_p} = \lal A,\widetilde A\ral_{L^2_p} = -\mathfrak{B}(\widetilde A,\widetilde A) \le -\lambda_\hbar|\widetilde A|_D^2<0.
\]
The strict inequality follows due to $A\not\equiv0$, and hence the weak solution $\widetilde A$ cannot vanish identically. The same argument gives $\lal \widetilde B,B\ral_{L^2_p}<0$. This completes the proof.
\end{proof}

We define the quantum viscosity coefficient by
\bq \label{eq:nu-h}
\nu_\hbar:=-\frac{1}{10m_2 }\lal\widetilde A ,A \ral_{L^2_p}>0.
\eq
We also introduce
\[
\chi_\hbar:=-\frac{2}{3m_0\beta\phi }\lal\widetilde B ,B \ral_{L^2_p}>0
\]
and define the quantum thermal diffusivity by
\[
\kappa_\hbar:=\frac{2\chi_\hbar}{(1+\beta )\phi }=-\frac{4}{3(1+\beta )m_0\beta\phi ^2}\lal\widetilde B ,B \ral_{L^2_p}>0.
\]

For a vector field $v=v(x)$, we write
\[
\Sigma(v):=\nabla_xv+(\nabla_xv)^T-\frac{2}{3}(\nabla_x\cdot v)\mathbb I_3.
\]

The following proposition gives the asymptotic expansions of the microscopic fluxes.

\begin{proposition}\label{prop:flux}
Let $T>0$. There exist remainder terms $\calR_A^\epsilon$ and $\calR_B^\epsilon$ such that
\bq\label{eq:A-exp}
\frac{1}{\epsilon}\lal g^\epsilon,A \ral_{L^2_p}=-\nu_\hbar\Sigma(\mfu^\epsilon)+\frac{1}{m_2 }\lt(\mfu^\epsilon\otimes\mfu^\epsilon-\frac{|\mfu^\epsilon|^2}{3}\mathbb I_3\rt)+\calR_A^\epsilon
\eq
and
\bq\label{eq:B-exp}
\frac{1}{\epsilon}\lal g^\epsilon,B \ral_{L^2_p} = -\chi_\hbar\nabla_x\vth^\epsilon + \frac{1}{2m_2 } \lt\{ \psi \vth^\epsilon + \lt(\frac{5\phi }{3}-\psi \rt) \lt( \vrh^\epsilon+\vth^\epsilon \rt) \rt\} \mfu^\epsilon + \calR_B^\epsilon.
\eq

Moreover,
\bq\label{eq:R-flux}
\nabla_x\cdot\calR_A^\epsilon\to0, \quad \nabla_x\cdot\calR_B^\epsilon\to0
\eq
in the sense of distributions on $(0,T)\times\R^3$ as $\epsilon\to0$.
\end{proposition}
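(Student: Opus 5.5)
We use the auxiliary functions of Lemma \ref{lem:AB-solv} to turn the flux pairings into pairings with $Lg^\epsilon$. By self-adjointness of $L$ on $\calN^\perp$ and $L\widetilde A=A$,
\[
\frac1\epsilon\lal g^\epsilon,A\ral_{L^2_p}=\frac1\epsilon\lal (I-P)g^\epsilon,L\widetilde A\ral_{L^2_p}=\frac1\epsilon\lal Lg^\epsilon,\widetilde A\ral_{L^2_p}.
\]
Self-adjointness follows from the symmetric structure of $\mathfrak B$ in the weighted formulation of \cite{CHHpre}. If it fails, we work with the adjoint problem instead. We then substitute $\frac1\epsilon Lg^\epsilon=\epsilon\pa_tg^\epsilon+p\cdot\nabla_xg^\epsilon-\Gamma^\epsilon(g^\epsilon)$ from \eqref{eq:pert-eq}. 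This gives
\[
\frac1\epsilon\lal g^\epsilon,A\ral=\epsilon\pa_t\lal g^\epsilon,\widetilde A\ral+\lal p\cdot\nabla_x g^\epsilon,\widetilde A\ral-\lal\Gamma_2(g^\epsilon),\widetilde A\ral-\epsilon\lal\Gamma^\epsilon_{\ge3}(g^\epsilon),\widetilde A\ral ,
\]
using the decomposition \eqref{eq:Gam-dec-eps}. The same identity holds with $B,\widetilde B$ in place of $A,\widetilde A$. All pairings are legitimate because of \eqref{eq:AB-L2} and the exponential decay of $\mu_\hbar$.

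The transport term is handled by splitting $g^\epsilon=Pg^\epsilon+(I-P)g^\epsilon$. The microscopic part contributes $\lal p\cdot\nabla_x(I-P)g^\epsilon,\widetilde A\ral$, which is $O(\epsilon)$ in $L^2(0,T;H^{s-1}_x)$ by \eqref{eq:micro-relax}, since $p\widetilde A\in L^2_p$. The macroscopic part is computed from \eqref{eq:Pg-hydro} using the radial forms $\widetilde A=\mathfrak a(p\otimes p-\frac{|p|^2}3\mathbb I_3)$ and $\widetilde B=\mathfrak b\,p$:
\begin{itemize}
\item[(i)] For $\widetilde A$, only the $\mfu^\epsilon\cdot p/m_2$ mode survives by parity and tracelessness. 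The standard isotropic fourth-moment identity
\[
\int p_ip_jp_kp_l\,\mathfrak a\sqrt{\mu_\hbar}=\frac{1}{15}\int|p|^4\mathfrak a\sqrt{\mu_\hbar}\,(\delta_{ij}\delta_{kl}+\delta_{ik}\delta_{jl}+\delta_{il}\delta_{jk})
\]
gives $\frac1{m_2}\cdot\frac1{10}\lal\widetilde A,A\ral\,\Sigma(\mfu^\epsilon)$. With \eqref{eq:nu-h} this is exactly $-\nu_\hbar\Sigma(\mfu^\epsilon)$.
\item[(ii)] For $\widetilde B$, only the scalar modes $\vrh^\epsilon/m_0$ and $\vth^\epsilon(|p|^2/\phi-1)/(m_0\beta)$ contribute. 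The $\vrh^\epsilon$ contribution vanishes because $\widetilde B\perp p\sqrt{\mu_\hbar}$, since $\widetilde B\in\calN^\perp$. The $\vth^\epsilon$ contribution gives $\frac{1}{3m_0\beta\phi}\lal\widetilde B,|p|^2p\sqrt{\mu_\hbar}\ral\nabla_x\vth^\epsilon$. Replacing $|p|^2p\sqrt{\mu_\hbar}$ by $2B$ (again using $\widetilde B\perp p\sqrt{\mu_\hbar}$) yields $-\chi_\hbar\nabla_x\vth^\epsilon$.
\end{itemize}

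The time-derivative term $\epsilon\pa_t\lal g^\epsilon,\widetilde A\ral$ tends to zero in distributions because $g^\epsilon$ is bounded in $L^\infty(0,T;H^s)$. The cubic term $\epsilon\lal\Gamma^\epsilon_{\ge3},\widetilde A\ral$ is $O(\epsilon)$ by Lemma \ref{lem:Gam-eps-est}, after integrating the $\nabla_p$ in $\Gamma$ by parts onto $\widetilde A$, which lies in the $D$-space. The quadratic term $\lal\Gamma_2(g^\epsilon),\widetilde A\ral=-\int\calR_2(g^\epsilon)\cdot\nabla_p(\widetilde A/\sqrt{\mu_\hbar})\sqrt{\mu_\hbar}$-type expressions are handled in two steps:
\begin{itemize}
\item[(a)] First replace $g^\epsilon$ by $Pg^\epsilon$. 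The error is bilinear in $(I-P)g^\epsilon$ and $g^\epsilon$, hence $O(\epsilon)$ in $L^1_t$, by the difference estimate \eqref{eq:Gam-eps-diff}.
\item[(b)] Then compute $\lal\Gamma_2(Pg^\epsilon),\widetilde A\ral$ exactly. For this, $\Gamma_2(Pg)$ is recognized as the second-order term $\frac12 D^2\mathcal M[\cdot,\cdot]$ in the expansion of the local quantum equilibrium manifold $(e^{|p-u|^2/(2\Theta)+\theta}-\hbar\kappa)^{-1}$. Equivalently, $\Gamma_2(Pg)=-L(\text{second-order equilibrium correction})$, so that $\lal\Gamma_2(Pg),\widetilde A\ral$ equals minus the pairing of that correction with $A$.
\end{itemize}
The second-order correction contains the term $\frac12(p\cdot u)^2\pa^2_{z}\calF$ and its thermal analogues. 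Pairing it with $A$ produces $\frac1{m_2}(\mfu^\epsilon\otimes\mfu^\epsilon-\frac{|\mfu^\epsilon|^2}3\mathbb I_3)$, using $\int|p|^4\mu_\hbar$-type identities and $m_2=M$. Pairing with $B$ produces the stated combination $\frac1{2m_2}\{\psi\vth^\epsilon+(\frac{5\phi}3-\psi)(\vrh^\epsilon+\vth^\epsilon)\}\mfu^\epsilon$. Everything left over is collected into $\calR^\epsilon_A$ and $\calR^\epsilon_B$, and each piece has divergence tending to zero in distributions.

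The main obstacle is step (b): verifying that $\Gamma_2$ on $\calN$ coincides with $-L$ applied to the quadratic equilibrium correction, modulo $\calN$. This is where the self-consistent fields $N_{u,2}$ and $N_{\Theta,2}$ enter. I would first check this directly from the formula for $\calR_2$ by inserting $g=(a+b\cdot p+c|p|^2)\sqrt{\mu_\hbar}$ and using $\nabla_p\calF_\hbar=-p\mu_\hbar$. Failing that, I would compute $\lal\Gamma_2(Pg),\widetilde A\ral$ by brute force, using only the radial forms of $\widetilde A$ and $\widetilde B$ and $L\widetilde A=A$, so that all $\mathfrak a$-dependent moments reduce to pairings with $A$.
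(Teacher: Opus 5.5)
Your proposal is correct and follows the paper's proof essentially step by step. The shared steps are: duality with $\widetilde A,\widetilde B$ through the self-adjointness of $L$ (which does hold, so the adjoint fallback is unnecessary); the same parity and radial computation of the transport terms; removal of the microscopic and higher-order parts of $\Gamma^\epsilon$, where the paper splits off $\Gamma^\epsilon(g^\epsilon)-\Gamma^\epsilon(Pg^\epsilon)$ first and then $\Gamma^\epsilon(Pg^\epsilon)-\Gamma_2(Pg^\epsilon)$, i.e.\ your two steps in the opposite order; and the identity $Lk_\hbar[h]+\Gamma_2(h)=0$ for the quadratic term. The step you flag as the main obstacle needs no direct computation with $\calR_2$. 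The self-consistent velocity and temperature of a local quantum equilibrium reproduce its own parameters, so $Q_\hbar(\mcF_{\hbar,\delta})=0$ identically in $\delta$. Reading off the $\delta^2$ coefficient of $\mcF_{\hbar,\delta}=\calF_\hbar+\delta\sqrt{\mu_\hbar}\,h+\delta^2\sqrt{\mu_\hbar}\,k_\hbar[h]+O(\delta^3)$ then gives the identity at once, which is exactly how the paper argues.
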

 The proof of Proposition \ref{prop:flux} is postponed to Section \ref{ssec:flux-comp}.

%
%
%
%
%
%
%
%
%
%
\subsection{Acoustic modes}

We next show that the acoustic modes vanish locally in the strong topology. This allows us to pass to the quadratic convection terms in the limiting equations.

We introduce the acoustic variables
\[
r^\epsilon:=\vrh^\epsilon+\vth^\epsilon, \quad v^\epsilon:=(I-\mP)\mfu^\epsilon.
\]
Since $\nabla_x\cdot\mP\mfu^\epsilon=0$, we have $\nabla_x\cdot\mfu^\epsilon=\nabla_x\cdot v^\epsilon$.  Adding the first and third equations of \eqref{eq:loc-bal}, and applying $I-\mP$ to the second equation, we obtain
\bq\label{eq:ac-sys}
\begin{cases}
\displaystyle \pa_t r^\epsilon + \frac{1+\beta}{\epsilon} \nabla_x\cdot v^\epsilon = F^\epsilon, \\[2mm] 
\displaystyle \pa_t v^\epsilon + \frac{\alpha}{\epsilon} \nabla_x r^\epsilon = G^\epsilon,
\end{cases}
\eq
where
\[
F^\epsilon := -\frac{2}{\phi\epsilon} \nabla_x\cdot \lal g^\epsilon, B \ral_{L^2_p},  \quad  G^\epsilon := -\frac{1}{\epsilon} (I-\mP) \nabla_x\cdot \lal g^\epsilon, A \ral_{L^2_p}.
\]

Since $A$ and $B$ decay exponentially in the velocity variable, the Cauchy--Schwarz inequality together with \eqref{eq:flux-micro} gives
\[
\| \nabla_x\cdot \lal g^\epsilon, B \ral_{L^2_p} \|_{H^{s-1}_x} + \| (I-\mP) \nabla_x\cdot \lal g^\epsilon, A \ral_{L^2_p} \|_{H^{s-1}_x} \le C_\hbar \| (I-P)g^\epsilon \|_{D,s}.
\]
Therefore, the microscopic relaxation estimate \eqref{eq:micro-relax} yields
\bq \label{eq:ac-src-est}
\| F^\epsilon \|_{L^2(0,T;H^{s-1}_x)} + \| G^\epsilon \|_{L^2(0,T;H^{s-1}_x)} \le \frac{C_\hbar}{\epsilon} \| (I-P)g^\epsilon \|_{L^2(0,T;D,s)}  \le C_\hbar.
\eq
Moreover, by \eqref{eq:hydro-bound},
\bq \label{eq:ac-var-est}
\| r^\epsilon \|_{L^\infty(0,T;H^s_x)} + \| v^\epsilon \|_{L^\infty(0,T;H^s_x)} \le C_\hbar.
\eq

The coefficients in \eqref{eq:ac-sys} are strictly positive. Indeed, $\alpha = \frac{\phi}{3} > 0$, $1+\beta = \frac{\psi}{\phi} > 0$. Hence, \eqref{eq:ac-sys} is a fast acoustic system with characteristic speed $c_{\rm ac} = \sqrt{ \alpha (1+\beta) } > 0$.  Owing to the factor $\epsilon^{-1}$, the corresponding acoustic waves propagate on the fast scale $\frac{c_{\rm ac}}\epsilon$.

We use the following local dispersive estimate for a general fast acoustic system. Its proof is given in Appendix \ref{app:acoustic}.

\begin{lemma}\label{lem:abs-disp}
Let $a,b>0$, $m\ge1$, $T>0$, and $0<\zeta<1$. Suppose that
$(\mathfrak r^\epsilon,\mathfrak v^\epsilon)$ satisfies
\bq\label{eq:abs-ac}
\begin{cases}
\displaystyle \pa_t\mathfrak r^\epsilon + \frac{a}{\epsilon} \nabla_x\cdot\mathfrak v^\epsilon = \mathfrak f^\epsilon, \\[2mm]
\displaystyle \pa_t\mathfrak v^\epsilon + \frac{b}{\epsilon} \nabla_x\mathfrak r^\epsilon = \mathfrak g^\epsilon,
\end{cases}
\eq
where $\mathfrak v^\epsilon$ and $\mathfrak g^\epsilon$ are gradient vector fields. Assume that
\[
\sup_{0<\epsilon\le1} \| \lt( \mathfrak r^\epsilon, \mathfrak v^\epsilon \rt) \|_{L^\infty(0,T;H^m_x)} + \sup_{0<\epsilon\le1} \| \lt( \mathfrak f^\epsilon, \mathfrak g^\epsilon \rt) \|_{L^2(0,T;H^{m-1}_x)} < \infty.
\]
Then,
\bq \label{eq:abs-zero}
\lt( \mathfrak r^\epsilon, \mathfrak v^\epsilon \rt) \to 0 \quad \text{strongly in } L^2( 0,T; H^{m-1-\zeta}_{\rm loc}(\R^3))
\eq
as $\epsilon\to0$.
\end{lemma}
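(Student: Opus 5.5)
The plan is to prove Lemma \ref{lem:abs-disp} by reducing \eqref{eq:abs-ac} to a scalar wave problem in the potential variable and then using local energy decay (a Strichartz-type smoothing estimate) for the three-dimensional wave equation. First rescale so that the system is symmetric. Set $c:=\sqrt{ab}$ and $\mathfrak w^\epsilon:=\sqrt{a/b}\,\mathfrak v^\epsilon$. Since $\mathfrak v^\epsilon$ and $\mathfrak g^\epsilon$ are gradients, write $\mathfrak w^\epsilon=\nabla_x\Lambda^{-1}\mathfrak z^\epsilon$ with $\Lambda=(-\Delta_x)^{1/2}$ and $\mathfrak z^\epsilon:=-\Lambda^{-1}\nabla_x\cdot\mathfrak w^\epsilon$. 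Because $\nabla_x\Lambda^{-1}$ is an isometry from scalars onto gradient fields in every $H^m$, the bounds on $(\mathfrak r^\epsilon,\mathfrak v^\epsilon)$ transfer to $(\mathfrak r^\epsilon,\mathfrak z^\epsilon)$. The system then becomes
\[
\pa_t\mathfrak r^\epsilon+\frac{c}{\epsilon}\Lambda\mathfrak z^\epsilon=\mathfrak f^\epsilon,\quad \pa_t\mathfrak z^\epsilon-\frac{c}{\epsilon}\Lambda\mathfrak r^\epsilon=\tilde{\mathfrak g}^\epsilon,
\]
with $\tilde{\mathfrak g}^\epsilon$ bounded in $L^2(0,T;H^{m-1}_x)$. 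Introducing $U^\epsilon:=\mathfrak r^\epsilon+i\mathfrak z^\epsilon$ gives the half-wave equation $\pa_tU^\epsilon-\frac{ic}{\epsilon}\Lambda U^\epsilon=H^\epsilon$. By Duhamel,
\[
U^\epsilon(t)=e^{ict\Lambda/\epsilon}U^\epsilon(0)+\int_0^te^{ic(t-s)\Lambda/\epsilon}H^\epsilon(s)\,\ds.
\]

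The key tool is the local energy decay estimate for the 3D half-wave group, $\|\chi e^{i\tau\Lambda}h\|_{L^2_\tau(\R;\dot H^{1/2-\delta})}$-type smoothing. Concretely, I will use the Kato smoothing estimate
\[
\int_\R\|\chi\,|\nabla|^{1/2}e^{i\tau\Lambda}h\|_{L^2}^2\,d\tau\le C_\chi\|h\|_{L^2}^2,
\]
for $\chi\in C_c^\infty$, which follows from the trace lemma on spheres in Fourier space. After the change of variables $\tau=ct/\epsilon$, this gives
\[
\|\chi e^{ict\Lambda/\epsilon}h\|_{L^2(0,T;\dot H^{1/2})}\lesssim\sqrt{\epsilon}\,\|h\|_{L^2}.
\]
The free part therefore vanishes like $\sqrt\epsilon$ after commuting derivatives and localizing. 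For the Duhamel term, the Christ--Kiselev lemma (or simply Minkowski's inequality in $s$ followed by Cauchy--Schwarz in time) gives a bound $C\sqrt{\epsilon}\,T^{1/2}\|H^\epsilon\|_{L^2(0,T;L^2)}$ at the level of $H^{m-1}$ regularity. Hence $\chi U^\epsilon\to0$ in $L^2(0,T;H^{m-1})$ for high frequencies, with a loss to be absorbed.

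The main obstacle is the low frequencies. The smoothing gain is $|\nabla|^{1/2}$, which is useless near $\xi=0$, and $\Lambda^{-1}\nabla$ is singular there. To handle this I will split $U^\epsilon=S_{<\eta}U^\epsilon+S_{\ge\eta}U^\epsilon$ with a Fourier cutoff at scale $\eta$. On $\{|\xi|\ge\eta\}$, the estimate above yields a bound $C\eta^{-1/2}\sqrt\epsilon$. For the low-frequency piece, multiplication by the cutoff $\chi$ combined with Bernstein/Hölder gives $\|\chi S_{<\eta}U\|_{L^2}\le|{\rm supp}\,\chi|^{1/2}\|S_{<\eta}U\|_{L^\infty}\lesssim\eta^{3/2}\|U\|_{L^2}$, uniformly in $\epsilon$. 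Choosing $\eta=\epsilon^{1/4}$ makes both pieces tend to zero in $L^2(0,T;H^{m-1}_{\rm loc})$.

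Losing $\zeta$ derivatives is not even needed at that level. Alternatively, if the localization commutators cause trouble, one obtains convergence in $L^2(0,T;L^2_{\rm loc})$ and interpolates with the uniform $L^\infty(0,T;H^m)$ bound to reach $H^{m-1-\zeta}_{\rm loc}$, which is where the $\zeta$ enters. Finally, returning from $(\mathfrak r^\epsilon,\mathfrak z^\epsilon)$ to $(\mathfrak r^\epsilon,\mathfrak v^\epsilon)$ requires $\nabla\Lambda^{-1}$, which is nonlocal. I plan to handle this by applying the same frequency splitting directly to $\mathfrak v^\epsilon=\sqrt{b/a}\,\nabla\Lambda^{-1}\mathfrak z^\epsilon$, noting that $\nabla\Lambda^{-1}$ commutes with $S_{\ge\eta}$ and with the group. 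Equivalently, the dispersive estimate can be stated for the vector $(\mathfrak r^\epsilon,\mathfrak v^\epsilon)$ with the group $\exp\bigl(\frac{t}{\epsilon}\bigl(\begin{smallmatrix}0&-a\nabla\cdot\\-b\nabla&0\end{smallmatrix}\bigr)\bigr)$ restricted to gradient fields.
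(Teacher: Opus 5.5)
\textbf{The gap: your key estimate is false.} The bound $\int_{\R}\|\chi\,|\nabla|^{1/2}e^{i\tau\Lambda}h\|_{L^2}^2\,\textnormal{d}\tau\le C_\chi\|h\|_{L^2}^2$ does not hold for the half-wave group. It is Kato's local smoothing for the Schr\"odinger group $e^{i\tau\Delta}$, whose group velocity grows like $|\xi|$. For $e^{i\tau\Lambda}$ the group velocity $\pm\xi/|\xi|$ has unit length, so waves cross a compact set in time $O(1)$ and no derivative is gained.

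For a counterexample, let $h$ be a wave packet of unit width with frequency concentrated near $Ne_1$, placed so that it passes through the region where $\chi=1$. For $N$ large it does not disperse over times $O(1)$, and $|\nabla|^{1/2}$ acts on it essentially as multiplication by $N^{1/2}$. Hence the left-hand side is $\gtrsim N\|h\|_{L^2}^2$.

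Consequently, your bound $C\eta^{-1/2}\sqrt{\epsilon}$ on $\{|\xi|\ge\eta\}$ and the balancing choice $\eta=\epsilon^{1/4}$ rest on a false premise. Your diagnosis of the difficulty is also inverted: for wave propagation, low frequencies are harmless, and elementary estimates lose at high frequencies. There is also a harmless sign slip: $\nabla_x\cdot\nabla_x\Lambda^{-1}\mathfrak z=-\Lambda\mathfrak z$.

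\textbf{The repair.} What the trace lemma actually gives is local energy decay without gain. Write $e^{i\tau\Lambda}h$ in polar coordinates and use Plancherel in $\tau$. Then apply the Agmon--H\"ormander bound $\int_{B_R}|(F\,d\sigma_\varrho)^\vee|^2\,\rdx\le CR\int|F|^2\,d\sigma_\varrho$, which is uniform in the sphere radius $\varrho$. This yields
\[
\int_{\R}\|\chi e^{i\tau\Lambda}h\|_{L^2}^2\,\textnormal{d}\tau\le C_\chi\|h\|_{L^2}^2\quad\text{for all }h\in L^2(\R^3).
\]
With this in place of your estimate, the rest of your scheme works and actually simplifies. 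Use the rescaling $\tau=ct/\epsilon$, the Leibniz rule for $\pa_x^\alpha(\chi\,\cdot)$, and Minkowski's inequality for the Duhamel term. To recover $\mathfrak v^\epsilon$, commute the real Riesz transform $\nabla_x\Lambda^{-1}$ with the group. Together these give $\|\chi(\mathfrak r^\epsilon,\mathfrak v^\epsilon)\|_{L^2(0,T;H^{m-1}_x)}\le C\sqrt\epsilon$, with no frequency splitting and no loss of $\zeta$.

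\textbf{Comparison with the paper.} The paper diagonalizes with the projectors $\Pi_\pm$ on the longitudinal subspace, which is equivalent to your complex variable $\mathfrak r+i\mathfrak z$. It then uses only the cruder pointwise bound $\sup_x\int_{\R}|e^{is\Lambda}h(x)|^2\,\textnormal{d}s\lesssim\|\Lambda h\|_{L^2}^2$, obtained from Cauchy--Schwarz on $\mathbb S^2$. Because that bound costs a derivative, the paper restricts to $\{\delta/2\le|\xi|\le 2N\}$. It controls the high tail by the uniform $H^m$ bound (gaining $N^{-1-\zeta}$) and the low part by the volume factor $\delta^{3/2}$, then concludes via $\limsup_{\epsilon\to0}$. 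Your corrected route uses a sharper but standard estimate and gives a rate in a stronger topology. The paper's route needs only elementary Fourier bounds, at the price of the three-piece decomposition.
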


We apply Lemma \ref{lem:abs-disp} to \eqref{eq:ac-sys} with $a := 1+\beta$, $b := \alpha$, $\mathfrak r^\epsilon := r^\epsilon$, $\mathfrak v^\epsilon := v^\epsilon$, and
\[
\mathfrak f^\epsilon := F^\epsilon = -\frac{2}{\phi\epsilon} \nabla_x\cdot \lal g^\epsilon, B \ral_{L^2_p},
\quad
\mathfrak g^\epsilon := G^\epsilon = -\frac{1}{\epsilon} (I-\mP) \nabla_x\cdot \lal g^\epsilon, A \ral_{L^2_p}.
\]
The estimates \eqref{eq:ac-src-est} and \eqref{eq:ac-var-est} verify the assumptions of Lemma \ref{lem:abs-disp} with $m=s$. Thus,
\bq\label{eq:ac-strong}
\vrh^\epsilon + \vth^\epsilon \to 0, \quad (I-\mP) \mfu^\epsilon \to 0 \quad \text{strongly in } L^2( 0,T; H^{s-1-\zeta}_{\rm loc}(\R^3))
\eq
for every $0<\zeta<1$.

Together with \eqref{eq:u-strong}, this gives
\[
\mfu^\epsilon \to \mfu \quad  \text{strongly in } L^2( 0,T; H^{s-1-\zeta}_{\rm loc}(\R^3)).
\]
As a consequence,
\[
\mfu^\epsilon \otimes \mfu^\epsilon \to \mfu \otimes \mfu \quad \text{strongly in } L^1( 0,T; H^{s-1-\zeta}_{\rm loc}(\R^3))
\]
for every $0<\zeta<1$. In particular,
\bq
\label{eq:u-conv}
\mP \nabla_x\cdot \lt( \mfu^\epsilon \otimes \mfu^\epsilon \rt) \to \mP \nabla_x\cdot \lt( \mfu \otimes \mfu \rt)
\eq
in the sense of distributions on $(0,T)\times\R^3$.

Similarly, since $q^\epsilon\to\vth$ strongly by \eqref{eq:theta-strong} and
\[
\vth^\epsilon - q^\epsilon = \frac{\beta}{1+\beta} \lt( \vrh^\epsilon + \vth^\epsilon \rt),
\]
the convergence \eqref{eq:ac-strong} implies
\[
\vth^\epsilon \to \vth \quad \text{strongly in } L^2( 0,T; H^{s-1-\zeta}_{\rm loc}(\R^3))
\]
for every $0<\zeta<1$. Consequently,
\bq\label{eq:u-theta}
\mfu^\epsilon \vth^\epsilon \to \mfu \vth \quad \text{strongly in } L^1( 0,T; H^{s-1-\zeta}_{\rm loc}(\R^3))
\eq
for every $0<\zeta<1$.

%
%
%
%
%
%
%
%
%
%
\subsection{Computation of the macroscopic fluxes}\label{ssec:flux-comp}
%
%
%
%
%
%
%
%
%
%
\subsubsection{Moment identities and quadratic macroscopic contributions}

In this subsection, we compute the principal macroscopic contributions appearing in Proposition \ref{prop:flux}. The main point is that the required quadratic terms can be identified exactly from the local quantum equilibrium manifold. No semiclassical approximation is used.

Using integration by parts and radial symmetry, we obtain
\bq\label{eq:mom-id1}
\intr p_ip_j\mu_\hbar\,\rdp = m_2 \delta_{ij}, \quad \intr p_ip_j\eta_\hbar\mu_\hbar\,\rdp = m_0 \delta_{ij},
\eq
and
\bq\label{eq:mom-id2}
\intr p_ip_jp_kp_\ell\eta_\hbar\mu_\hbar\,\rdp = m_2  \lt( \delta_{ij}\delta_{k\ell} + \delta_{ik}\delta_{j\ell} + \delta_{i\ell}\delta_{jk} \rt).
\eq
Similarly,
\bq\label{eq:mom-id3}
\intr p_ip_j|p|^2\eta_\hbar\mu_\hbar\,\rdp = 5m_2 \delta_{ij}, \quad \intr p_ip_j|p|^4\eta_\hbar\mu_\hbar\,\rdp = 7m_2 \psi \delta_{ij}.
\eq

We now compute the quadratic part of the nonlinear collision operator. Let
\[
h = P g = S[g]\sqrt{\mu_\hbar},
\]
where
\[
S[g] := \frac{\vrh}{m_0 } + \frac{\mfu\cdot p}{m_2 } + \frac{\vth}{m_0\beta} \lt( \frac{|p|^2}{\phi } - 1 \rt).
\]
For notational simplicity, we write
\[
\mathsf{a} := \frac{\vrh}{m_0 } - \frac{\vth}{m_0\beta}, \quad \mathsf{b} := \frac{\mfu}{m_2 }, \quad \mathsf{c} := \frac{\vth}{m_0\beta\phi }.
\]
Then,
\[
S[g] = \mathsf a + \mathsf b\cdot p + \mathsf c |p|^2.
\]

Let $Q_\hbar$ denote the nonlinear quantum Fokker--Planck collision operator 
\[ 
Q_\hbar(f) := \rho_f\nabla_p\cdot\lt\{ \Theta_f\nabla_p f+(p-u_f)f(1+\hbar\kappa f) \rt\}. 
\]
Consider the family of local quantum equilibria
\[
\mcF_{\hbar,\delta}(p) := \lt[ \exp \lt( \frac{|p-\delta \mathsf b|^2}{2(1+2\delta \mathsf c)} + \theta_* - \delta \mathsf a \rt) - \hbar\kappa \rt]^{-1}.
\]
For $|\delta|$ sufficiently small, the temperature parameter $1 + 2 \delta \mathsf c$ is strictly positive. Hence, for each fixed $(t,x)$, $\mcF_{\hbar,\delta}$ is a local quantum equilibrium with bulk velocity $\delta \mathsf{b}$, temperature $1+2\delta \mathsf c$, and chemical-potential parameter $\theta_*-\delta \mathsf{a}$. In particular, its collision operator vanishes:
\bq \label{eq:Q-Fdelta}
Q_\hbar \lt( \mcF_{\hbar,\delta} \rt) = 0.
\eq

Expanding the exponent around $\delta=0$, we obtain
\[
\frac{|p-\delta \mathsf b|^2}{2(1+2\delta \mathsf c)} + \theta_* - \delta \mathsf a = \frac{|p|^2}{2} + \theta_* - \delta S[g] + \delta^2 E[g] + O(\delta^3), \quad E[g] := \frac{|\mathsf b|^2}{2} + 2\mathsf c\,\mathsf b\cdot p + 2\mathsf c^2|p|^2.
\]
Since
\[
\frac{\rm d}{\textnormal{d}z} \lt( e^z - \hbar\kappa \rt)^{-1} = - \mu_\hbar \quad \text{at } z = \frac{|p|^2}{2} + \theta_*,
\]
we find
\[
\mcF_{\hbar,\delta} = \calF_\hbar + \delta \sqrt{\mu_\hbar}\,h + \delta^2 \sqrt{\mu_\hbar}\, k_\hbar[h] + O( \delta^3), \quad k_\hbar[h] := \sqrt{\mu_\hbar} \lt\{ \frac{1}{2} \eta_\hbar S[g]^2 - E[g] \rt\}.
\]
Recall that $h = P g \in \ker L$. Expanding \eqref{eq:Q-Fdelta} up to order $\delta^2$ and using $L  h = 0$, we obtain
\bq\label{eq:Lk-Gamma}
L  k_\hbar[h] + \Gamma_2  \lt(h \rt) = 0.
\eq

We first test \eqref{eq:Lk-Gamma} against $\widetilde A $. Since $L \widetilde A  = A$, we have
\[
-\lal \Gamma_2 (P g), \widetilde A  \ral_{L^2_p} = \lal k_\hbar[P g], A  \ral_{L^2_p}.
\]
By oddness and radial symmetry, and using that $A$ is traceless, all terms in $k_\hbar[Pg]$ vanish when paired with $A$, except for the anisotropic quadratic term $\frac12\eta_\hbar(\mathsf b\cdot  p)^2 \sqrt{\mu_\hbar} $. Hence,
\[
\lal k_\hbar[P g],A\ral_{L^2_p} = \frac12 \intr  \eta_\hbar (\mathsf b\cdot p)^2 \lt(p\otimes p-\frac{|p|^2}{3}\mathbb I_3\rt) \mu_\hbar\,\rdp  = m_2\lt(\mathsf b\otimes \mathsf b-\frac{|\mathsf b|^2}{3}\mathbb I_3\rt),
\]
where we used \eqref{eq:mom-id2}--\eqref{eq:mom-id3}. Since $\mathsf b=\frac{\mfu}{m_2}$, this gives
\bq\label{eq:Gamma2-A-fin}
-\lal \Gamma_2 (P g), \widetilde A  \ral_{L^2_p} = \frac{1}{m_2 } \lt( \mfu\otimes\mfu - \frac{|\mfu|^2}{3} \mathbb I_3 \rt).
\eq

We next test \eqref{eq:Lk-Gamma} against $\widetilde B $. Since $L \widetilde B  = B$, we have
\[
-\lal \Gamma_2 (P g), \widetilde B  \ral_{L^2_p} = \lal k_\hbar[P g], B  \ral_{L^2_p}.
\]
Using \eqref{eq:mom-id1}--\eqref{eq:mom-id3}, we obtain
\[
\lal k_\hbar[P g], B  \ral_{L^2_p} = \frac{1}{2m_2 } \lt\{ \psi \vth + \lt( \frac{5\phi }{3}-\psi\rt) \lt( \vrh+\vth \rt) \rt\} \mfu.
\]
Consequently,
\bq\label{eq:Gamma2-B-fin}
-\lal \Gamma_2 (P g), \widetilde B  \ral_{L^2_p} = \frac{1}{2m_2 } \lt\{ \psi \vth + \lt( \frac{5\phi }{3}-\psi \rt) \lt( \vrh+\vth \rt) \rt\} \mfu.
\eq
Since $\psi  = (1+\beta )\phi$, the first term on the right-hand side of \eqref{eq:Gamma2-B-fin} produces the convection term in the limiting temperature equation. When this identity is applied to $P g^\epsilon$, the second term on the right-hand side contains the acoustic factor $\vrh^\epsilon+\vth^\epsilon$. It vanishes in the hydrodynamic limit by \eqref{eq:ac-strong}.

%
%
%
%
%
%
%
%
%
%
 
\subsubsection{Estimates of the nonlinear remainder terms}

We now estimate the nonlinear remainder terms appearing in the proof of Proposition \ref{prop:flux}. Recall that
\[
\Gamma^\epsilon(g) = \frac{1}{\sqrt{\mu_\hbar}} \nabla_p\cdot \calR^\epsilon(g),
\]
where $\calR^\epsilon(g)$ is defined in \eqref{eq:R-eps}.

For a scalar-, vector-, or tensor-valued function $X=X(p)$, we define
\[
\calA_{\hbar,+}X := \nabla_pX + \frac{1}{2} p\eta_\hbar X.
\]
By integration by parts in the velocity variable, we have
\bq\label{eq:Gam-pair}
\lal \Gamma^\epsilon(g), X \ral_{L^2_p} = - \lt\lal \frac{ \calR^\epsilon(g) }{ \sqrt{\mu_\hbar} }, \calA_{\hbar,+}X \rt\ral_{L^2_p}.
\eq
Here and below, the identity is understood componentwise when $X$ is vector- or tensor-valued.

By Lemma \ref{lem:AB-solv}, we have
\bq\label{eq:Aplus-AB}
\calA_{\hbar,+} \widetilde A , \quad \calA_{\hbar,+} \widetilde B  \in L^2_p.
\eq

\begin{lemma}\label{lem:nl-rem}
Let $s\ge4$ and $T>0$. Let $g^\epsilon$ be the global solution constructed in Theorem \ref{thm:scaled-global}. Then,
\bq\label{eq:Gam-micro-A}
\| \lal \Gamma^\epsilon ( g^\epsilon ) - \Gamma^\epsilon ( P g^\epsilon ), \widetilde A  \ral_{L^2_p} \|_{L^2(0,T;H^{s-1}_x)} \le C_{\hbar,T} \epsilon
\eq
and
\bq\label{eq:Gam-micro-B}
\| \lal \Gamma^\epsilon ( g^\epsilon ) - \Gamma^\epsilon ( P g^\epsilon ), \widetilde B  \ral_{L^2_p} \|_{L^2(0,T;H^{s-1}_x)} \le C_{\hbar,T} \epsilon.
\eq
Moreover,
\bq\label{eq:Gam-high-A}
\| \lal \Gamma^\epsilon ( P g^\epsilon ) - \Gamma_2  ( P g^\epsilon ), \widetilde A  \ral_{L^2_p} \|_{L^\infty(0,T;H^{s-1}_x)} \le C_{\hbar,T} \epsilon
\eq
and
\bq\label{eq:Gam-high-B}
\| \lal \Gamma^\epsilon ( P g^\epsilon ) - \Gamma_2  ( P g^\epsilon ), \widetilde B  \ral_{L^2_p} \|_{L^\infty(0,T;H^{s-1}_x)} \le C_{\hbar,T} \epsilon.
\eq
\end{lemma}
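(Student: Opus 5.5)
The plan is to use the pairing identity \eqref{eq:Gam-pair} to move every velocity derivative onto $\widetilde A$ or $\widetilde B$, which are controlled by \eqref{eq:Aplus-AB}. Pointwise in $(t,x)$ this gives
\[
\lal \Gamma^\epsilon(g_1)-\Gamma^\epsilon(g_2),X\ral_{L^2_p}=-\lt\lal \frac{\calR^\epsilon(g_1)-\calR^\epsilon(g_2)}{\sqrt{\mu_\hbar}},\calA_{\hbar,+}X\rt\ral_{L^2_p},\quad X\in\{\widetilde A,\widetilde B\},
\]
and then Cauchy--Schwarz in $p$ with the weight $\calA_{\hbar,+}X\in L^2_p$. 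No derivatives in $p$ land on $X$ beyond the first one. Applying $\partial_x^\alpha$ with $|\alpha|\le s-1$ commutes with the $p$-pairing, so it is enough to bound $\|\partial_x^\alpha\{\calR^\epsilon(g_1)-\calR^\epsilon(g_2)\}/\sqrt{\mu_\hbar}\|_{L^2_{x,p}}$.

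For \eqref{eq:Gam-micro-A}--\eqref{eq:Gam-micro-B}, take $g_1=g^\epsilon$ and $g_2=Pg^\epsilon$, so $g_1-g_2=(I-P)g^\epsilon$. From \eqref{eq:R-eps}, $\calR^\epsilon$ is a sum of terms that are at least quadratic in $g$. Each term has the form (moment functional of $g$) $\times$ ($g\sqrt{\mu_\hbar}$, $\nabla_p(g\sqrt{\mu_\hbar})$, $g^2\mu_\hbar$, or $\mu_\hbar$), multiplied by powers of $\epsilon$ and by the smooth rational factor $R_1^\epsilon$. The moment functionals are bounded by the smallness assumption, since the denominator stays near $m_0$. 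Dividing by $\sqrt{\mu_\hbar}$ leaves at most one $\sqrt{\mu_\hbar}$ or a $\sqrt{\mu_\hbar}\,g$ factor, which gives exponential decay or integrability. Telescoping each product expresses the difference as a sum of terms with at least one factor of $(I-P)g^\epsilon$ or of a moment of it, times a factor bounded in $L^\infty_tH^s_{x,p}$. I would place the $L^\infty_x$ norm on the lower-order factor via $H^2_x\hookrightarrow L^\infty_x$ (here $s\ge4$ matters) and use the Moser/product estimate in $H^{s-1}_x$. This yields
\[
\|\cdots\|_{H^{s-1}_x}\le C_\hbar\|g^\epsilon\|_{H^s_{x,p}}\|(I-P)g^\epsilon\|_{D,s}.
\]
The $\nabla_p(g\sqrt{\mu_\hbar})$ term needs the $D$ norm, or equivalently $H^s$ with $|\beta|=1$, of the microscopic part. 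Alternatively, \eqref{eq:Gam-eps-diff} can be invoked directly with $\varphi=\widetilde A$ after commuting $\partial_x^\alpha$. Integrating in time and using \eqref{eq:global-bound} and \eqref{eq:micro-relax} gives the bound $C_{\hbar,T}\epsilon$ in $L^2(0,T;H^{s-1}_x)$.

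For \eqref{eq:Gam-high-A}--\eqref{eq:Gam-high-B}, write $\Gamma^\epsilon(h)-\Gamma_2(h)=\epsilon\Gamma^\epsilon_{\ge3}(h)$ with $h=Pg^\epsilon$, following \eqref{eq:Gam-dec-eps}. I would verify this concretely by comparing \eqref{eq:R-eps} with $\calR_2$ term by term. Each discrepancy carries an explicit $\epsilon$: the $\epsilon$-weighted terms in $N^\epsilon_u$, $N^\epsilon_\Theta$, and $\calR^\epsilon$, as well as $R^\epsilon_1-m_0^{-1}\intr\eta_\hbar h\sqrt{\mu_\hbar}\,\rdp=O(\epsilon)$, which follows from expanding the quotient. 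Each discrepancy is also at least cubic in $h$. Since $h=(a+b\cdot p+c|p|^2)\sqrt{\mu_\hbar}$ has smooth, exponentially decaying velocity dependence with coefficients bounded in $L^\infty_tH^s_x$, the pairing with $\calA_{\hbar,+}\widetilde A$ reduces to polynomials in $(a,b,c)$ with finite constant coefficients. The algebra property of $H^{s-1}_x$ then gives a pointwise-in-time bound $C\epsilon\|(a,b,c)\|_{H^s_x}^3$, hence an $L^\infty_t$ estimate.

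I expect the main obstacle to be the first part. One has to make sure that each term of $\calR^\epsilon(g)/\sqrt{\mu_\hbar}$ is genuinely in $L^2_p$, given the $\hbar\kappa\mu_\hbar g^2$ and $\nabla_p(g\sqrt{\mu_\hbar})$ pieces, and one has to distribute derivatives in the product estimates so that the microscopic factor always appears in the $D$ norm with total order at most $s$. I would handle this by following the structure of \cite[Lemma 4.6]{CHHpre}.
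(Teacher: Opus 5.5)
Your proposal is correct and follows essentially the same route as the paper. Both use the pairing identity \eqref{eq:Gam-pair} with $\calA_{\hbar,+}\widetilde A,\ \calA_{\hbar,+}\widetilde B\in L^2_p$, a term-by-term Moser and $H^2_x\hookrightarrow L^\infty_x$ bound of $\partial_x^\alpha\{\calR^\epsilon(g^\epsilon)-\calR^\epsilon(Pg^\epsilon)\}/\sqrt{\mu_\hbar}$ by the microscopic $D,s$-norm followed by \eqref{eq:micro-relax}, and the splitting \eqref{eq:Gam-dec-eps} with a uniform bound on the cubic remainder; your explicit check that every discrepancy between $\calR^\epsilon$ and $\calR_2$ carries a factor $\epsilon$ (including the expansion of $R^\epsilon_1$) is a useful addition the paper leaves implicit. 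One caveat concerns only your aside: invoking \eqref{eq:Gam-eps-diff} ``after commuting $\partial_x^\alpha$'' does not work as stated, since $\Gamma^\epsilon$ is nonlinear and that estimate yields only an $L^2_x$-level bound, but your main argument does not rely on it.
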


\begin{proof}
We divide the proof into two steps.

\medskip
\noindent
\textbf{Step 1. Terms containing the microscopic component.}
We set $h^\epsilon := P g^\epsilon$, $w^\epsilon := (I-P)g^\epsilon$. Then, $g^\epsilon = h^\epsilon + w^\epsilon$. Let $X$ denote either $\widetilde A $ or $\widetilde B $. By \eqref{eq:Gam-pair},
\bq\label{eq:Gam-diff-pair}
\lal \Gamma^\epsilon ( g^\epsilon ) - \Gamma^\epsilon ( h^\epsilon ), X \ral_{L^2_p} = - \lt\lal \frac{ \calR^\epsilon ( g^\epsilon ) - \calR^\epsilon ( h^\epsilon ) }{ \sqrt{\mu_\hbar} }, \calA_{\hbar,+}X \rt\ral_{L^2_p}.
\eq

We claim that, for every multi-index $\alpha$ satisfying
$|\alpha|\le s-1$,
\bq\label{eq:R-diff-x}
\lt\| \partial_x^\alpha \lt\{ \frac{ \calR^\epsilon ( g^\epsilon ) - \calR^\epsilon ( h^\epsilon ) }{ \sqrt{\mu_\hbar} } \rt\} \rt\|_{L^2_{x,p}} \le C_{\hbar,T} \| w^\epsilon \|_{D,s}.
\eq
Indeed, every term in $\calR^\epsilon(g^\epsilon)-\calR^\epsilon(h^\epsilon)$ contains either the microscopic component $w^\epsilon$ or a macroscopic remainder generated by the difference between $g^\epsilon$ and $h^\epsilon$. The remaining factors consist of velocity moments of $g^\epsilon$ or $h^\epsilon$, the nonlinear
macroscopic remainders $N^\epsilon_u$ and $N^\epsilon_\Theta$, and fixed velocity weights with sufficient decay. Since $s\ge4$, the Sobolev embedding $H^2_x \hookrightarrow L^\infty_x$ and the Sobolev--Moser product estimates imply
\[
\| \partial_x^\alpha (FG) \|_{L^2_x} \le C \lt( \| F \|_{H^{s-1}_x} \| G \|_{L^\infty_x} + \| F \|_{L^\infty_x} \| G \|_{H^{s-1}_x} \rt).
\]
The denominators appearing in $R^\epsilon_1$ remain uniformly bounded away from zero in the perturbative regime. Moreover, the moment estimates give
\[
\| \mathscr a ( w^\epsilon ) \|_{H^{s-1}_x} + \| \mathscr b ( w^\epsilon ) \|_{H^{s-1}_x} + \| \mathscr c ( w^\epsilon ) \|_{H^{s-1}_x} \le C_\hbar \| w^\epsilon \|_{D,s}.
\]
The same argument applied to the explicit formulas \eqref{eq:Nueps} and \eqref{eq:NTeps} yields
\[
\| N^\epsilon_u ( g^\epsilon ) - N^\epsilon_u ( h^\epsilon ) \|_{H^{s-1}_x} + \| N^\epsilon_\Theta ( g^\epsilon ) - N^\epsilon_\Theta ( h^\epsilon ) \|_{H^{s-1}_x} \le C_{\hbar,T} \| w^\epsilon \|_{D,s}.
\]
Combining the preceding estimates with the explicit formula \eqref{eq:R-eps}, we obtain \eqref{eq:R-diff-x}.

Using \eqref{eq:Aplus-AB}, \eqref{eq:Gam-diff-pair}, and \eqref{eq:R-diff-x}, we conclude that
\[
\| \lal \Gamma^\epsilon ( g^\epsilon ) - \Gamma^\epsilon ( P g^\epsilon ), \widetilde A  \ral_{L^2_p} \|_{H^{s-1}_x} + \| \lal \Gamma^\epsilon ( g^\epsilon ) - \Gamma^\epsilon ( P g^\epsilon ), \widetilde B  \ral_{L^2_p} \|_{H^{s-1}_x} \le C_{\hbar,T} \| (I-P)g^\epsilon \|_{D,s}.
\]
Integrating in time and using \eqref{eq:micro-relax}, we obtain \eqref{eq:Gam-micro-A} and \eqref{eq:Gam-micro-B}.

\medskip
\noindent
\textbf{Step 2. Higher-order macroscopic terms.}
By \eqref{eq:Gam-dec-eps}, we have
\bq\label{eq:Gam-high-dec}
\Gamma^\epsilon ( P g^\epsilon ) - \Gamma_2  ( P g^\epsilon ) = \epsilon \Gamma^\epsilon_{\ge3} ( P g^\epsilon ).
\eq
Since $P$ is a finite-rank operator and its basis functions decay exponentially in the velocity variable, the explicit formula \eqref{eq:R-eps} and the Sobolev--Moser estimates give
\bq\label{eq:Gam-high-bnd}
\| \lal \Gamma^\epsilon_{\ge3} ( P g^\epsilon ), \widetilde A  \ral_{L^2_p} \|_{H^{s-1}_x} + \| \lal \Gamma^\epsilon_{\ge3} ( P g^\epsilon ), \widetilde B  \ral_{L^2_p} \|_{H^{s-1}_x} \le C_{\hbar,T}.
\eq
Combining \eqref{eq:Gam-high-dec} and \eqref{eq:Gam-high-bnd}, we obtain \eqref{eq:Gam-high-A} and \eqref{eq:Gam-high-B}.
\end{proof}

%
%
%
%
%
%
%
%
%
%
\subsubsection{Transport contributions and completion of the flux expansion}

We now compute the transport contributions and complete the proof of Proposition \ref{prop:flux}. Recall that $L \widetilde A =A$, $L \widetilde B =B$, where $\widetilde A ,\widetilde B \in\calN^\perp$.

We first compute the transport terms generated by the macroscopic component.

\begin{lemma}\label{lem:tr-flux}
Let $g=g(x,p)$ and write
\[
P g=\lt\{\frac{\vrh}{m_0 }+\frac{\mfu\cdot p}{m_2 }+\frac{\vth}{m_0\beta}\lt(\frac{|p|^2}{\phi }-1\rt)\rt\}\sqrt{\mu_\hbar}.
\]
Then,
\bq\label{eq:A-tr-app}
\lal p\cdot\nabla_xP g,\widetilde A \ral_{L^2_p}=-\nu_\hbar\Sigma(\mfu)
\eq
and
\bq\label{eq:B-tr-app}
\lal p\cdot\nabla_xP g,\widetilde B \ral_{L^2_p}=-\chi_\hbar\nabla_x\vth.
\eq
\end{lemma}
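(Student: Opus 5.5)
The plan is to reduce both pairings to isotropic tensor integrals. Three facts will be used: the radial structure of $\widetilde A$ and $\widetilde B$ from Lemma \ref{lem:AB-solv}, the orthogonality $\widetilde A,\widetilde B\in\calN^\perp$, and the parity of the integrands in $p$. Since $P$ acts only in $p$ and $\widetilde A,\widetilde B$ do not depend on $x$, I will write
\[
\lal p\cdot\nabla_x Pg,X\ral_{L^2_p}=\sum_{k=1}^3\lal p_k\,\partial_{x_k}Pg,X\ral_{L^2_p}
\]
for $X\in\{\widetilde A,\widetilde B\}$. I will then treat the three pieces of $Pg$ (density, velocity, temperature) separately.

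\textbf{Stress tensor.} By Lemma \ref{lem:AB-solv}, $\widetilde A=\mathfrak a(|p|)\bigl(p\otimes p-\frac{|p|^2}{3}\mathbb I_3\bigr)$ is even in $p$. Multiplying the scalar parts of $Pg$ (the $\vrh$ and $\vth$ terms) by $p_k$ gives odd functions, so their pairings with $\widetilde A$ vanish. Only the velocity part remains:
\[
\lal p\cdot\nabla_xPg,\widetilde A_{ij}\ral_{L^2_p}=\frac{1}{m_2}\sum_{k,l}\partial_{x_k}\mfu_l\intr p_kp_l\sqrt{\mu_\hbar}\,\widetilde A_{ij}\,\rdp.
\]
Next I write $p_kp_l\sqrt{\mu_\hbar}=A_{kl}+\frac{|p|^2}{3}\delta_{kl}\sqrt{\mu_\hbar}$. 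The isotropic piece pairs to zero with $\widetilde A_{ij}$: after angular averaging, $\mathfrak a(|p|)|p|^2\bigl(p_ip_j-\frac{|p|^2}{3}\delta_{ij}\bigr)$ integrates to zero. The remaining fourth-order tensor $T_{klij}:=\lal A_{kl},\widetilde A_{ij}\ral_{L^2_p}$ is rotation-invariant by \eqref{eq:rot-cov}, and it is symmetric and traceless in each pair of indices. Hence
\[
T_{klij}=\lambda\Bigl(\delta_{ik}\delta_{jl}+\delta_{il}\delta_{jk}-\tfrac23\delta_{ij}\delta_{kl}\Bigr).
\]
Contracting $i=k$, $j=l$ gives $\lal A,\widetilde A\ral_{L^2_p}=10\lambda$. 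Substituting back, the result equals $\frac{\lambda}{m_2}\Sigma(\mfu)_{ij}$, and $\frac{\lambda}{m_2}=\frac{1}{10m_2}\lal A,\widetilde A\ral_{L^2_p}=-\nu_\hbar$ by \eqref{eq:nu-h}. This gives \eqref{eq:A-tr-app}.

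\textbf{Heat flux.} Here $\widetilde B=\mathfrak b(|p|)p$ is odd in $p$, so the velocity term (which becomes even after multiplication by $p_k$) drops out. For the scalar terms, $p_k\bigl(\frac{\vrh}{m_0}-\frac{\vth}{m_0\beta}\bigr)\sqrt{\mu_\hbar}$ lies in $\calN$ and is therefore orthogonal to $\widetilde B\in\calN^\perp$. The only surviving contribution is
\[
\frac{1}{m_0\beta\phi}\sum_k\partial_{x_k}\vth\intr p_k|p|^2\sqrt{\mu_\hbar}\,\widetilde B_i\,\rdp .
\]
I will use $p_k|p|^2\sqrt{\mu_\hbar}=2B_k+\psi p_k\sqrt{\mu_\hbar}$. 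The second term is killed again by orthogonality. For the first, rotational covariance gives $\lal B_k,\widetilde B_i\ral_{L^2_p}=\frac13\delta_{ik}\lal B,\widetilde B\ral_{L^2_p}$. Altogether the pairing equals
\[
\frac{2}{3m_0\beta\phi}\lal B,\widetilde B\ral_{L^2_p}\,\nabla_x\vth=-\chi_\hbar\nabla_x\vth,
\]
which is \eqref{eq:B-tr-app}.

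The main point needing care is justifying the isotropic form of $T_{klij}$ and showing that the trace part drops out. Both follow from the explicit radial form of $\widetilde A$ in Lemma \ref{lem:AB-solv}. Integrability of every pairing is guaranteed by \eqref{eq:AB-L2} and the exponential decay of $\mu_\hbar$.
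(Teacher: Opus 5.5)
Your proposal is correct and follows essentially the same route as the paper's proof. Parity isolates the momentum component for $\widetilde A$ and the thermal component for $\widetilde B$; the isotropic fourth-order tensor, normalized by the contraction $\lal A,\widetilde A\ral_{L^2_p}=10\lambda$, gives the viscous term; and orthogonality of $\widetilde B$ to $p\sqrt{\mu_\hbar}$ with $p|p|^2\sqrt{\mu_\hbar}=2B+\psi p\sqrt{\mu_\hbar}$ gives the heat flux. You also justify the isotropic tensor form and the factor $\frac23$, which the paper only asserts.
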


\begin{proof}
We first consider the stress tensor. By the representation of $\widetilde A$ in Lemma \ref{lem:AB-solv}, $\widetilde A$ is even in $p$. Thus, the density and thermal components of $Pg$ do not contribute to $\lal p\cdot\nabla_xPg,\widetilde A\ral_{L^2_p}$ since the corresponding integrands are odd functions of $p$. Hence only the momentum component contributes, and
\bq\label{eq:A-tr-comp}
\lal p\cdot\nabla_xP g,\widetilde A \ral_{L^2_p}=\frac{1}{m_2 }\sum_{i,j=1}^3\partial_{x_i}\mfu_j\intr p_ip_j\widetilde A \sqrt{\mu_\hbar}\,\rdp.
\eq
Since $L $ is rotationally invariant and $A $ is a symmetric traceless tensor, the same properties hold for $\widetilde A $. Therefore,
\bq\label{eq:A-rot}
\intr p_ip_j(\widetilde A )_{k\ell}\sqrt{\mu_\hbar}\,\rdp=\frac{1}{10}\lal\widetilde A ,A \ral_{L^2_p}\lt(\delta_{ik}\delta_{j\ell}+\delta_{i\ell}\delta_{jk}-\frac{2}{3}\delta_{ij}\delta_{k\ell}\rt).
\eq
Substituting \eqref{eq:A-rot} into \eqref{eq:A-tr-comp} and using the definition \eqref{eq:nu-h}, we obtain
\[
\lal p\cdot\nabla_xP g,\widetilde A \ral_{L^2_p}=-\nu_\hbar\lt\{\nabla_x\mfu+(\nabla_x\mfu)^T-\frac{2}{3}(\nabla_x\cdot\mfu) \mathbb I_3\rt\},
\]
which proves \eqref{eq:A-tr-app}.

We next consider the heat flux. By the representation of $\widetilde B$ in Lemma \ref{lem:AB-solv}, $\widetilde B$ is odd in $p$. Thus, the momentum component of $Pg$ does not contribute since the corresponding velocity integral is proportional to
\[
\intr p_i p_k(\widetilde B)_j\sqrt{\mu_\hbar}\,\rdp,
\]
whose integrand is odd in $p$. The density component is not excluded by this symmetry; instead, it vanishes by the orthogonality $\widetilde B\in\calN^\perp$, namely
\[
\intr p_i(\widetilde B )_j\sqrt{\mu_\hbar}\,\rdp=0.
\]
Hence, only the thermal component contributes:
\[
\lal p\cdot\nabla_xP g,\widetilde B \ral_{L^2_p}=\frac{1}{m_0\beta}\sum_{i,j=1}^3\partial_{x_i}\vth\intr p_i\lt(\frac{|p|^2}{\phi }-1\rt)(\widetilde B )_j\sqrt{\mu_\hbar}\,\rdp.
\]
Using again the orthogonality of $\widetilde B $ to $p\sqrt{\mu_\hbar}$, we obtain
\[
\intr p_i\lt(\frac{|p|^2}{\phi }-1\rt)(\widetilde B )_j\sqrt{\mu_\hbar}\,\rdp=\frac{1}{\phi }\intr p_i|p|^2(\widetilde B )_j\sqrt{\mu_\hbar}\,\rdp.
\]
By rotational symmetry,
\[
\intr p_i|p|^2(\widetilde B )_j\sqrt{\mu_\hbar}\,\rdp=\frac{2}{3}\lal\widetilde B ,B \ral_{L^2_p}\delta_{ij}.
\]
It follows that
\[
\lal p\cdot\nabla_xP g,\widetilde B \ral_{L^2_p}=\frac{2}{3m_0\beta\phi }\lal\widetilde B ,B \ral_{L^2_p}\nabla_x\vth=-\chi_\hbar\nabla_x\vth,
\]
which proves \eqref{eq:B-tr-app}.
\end{proof}

We now turn to the remainder terms.  By the self-adjointness of $L $, the perturbative equation \eqref{eq:pert-eq}, and \eqref{eq:AB-aux}, we have
\[
\frac{1}{\epsilon}\lal g^\epsilon,A \ral_{L^2_p} =\frac{1}{\epsilon}\lal L  g^\epsilon,\widetilde A \ral_{L^2_p} =\epsilon\pa_t\lal g^\epsilon,\widetilde A \ral_{L^2_p}+\lal p\cdot\nabla_xg^\epsilon,\widetilde A \ral_{L^2_p}-\lal\Gamma^\epsilon(g^\epsilon),\widetilde A \ral_{L^2_p}
\]
and
\[
\frac{1}{\epsilon}\lal g^\epsilon,B \ral_{L^2_p} =\frac{1}{\epsilon}\lal L  g^\epsilon,\widetilde B \ral_{L^2_p} =\epsilon\pa_t\lal g^\epsilon,\widetilde B \ral_{L^2_p}+\lal p\cdot\nabla_xg^\epsilon,\widetilde B \ral_{L^2_p}-\lal\Gamma^\epsilon(g^\epsilon),\widetilde B \ral_{L^2_p}.
\]

Let $\Gamma_2$ be the quadratic operator defined in \eqref{eq:Gam2-h}. We decompose
\[
\Gamma^\epsilon(g^\epsilon) = \Gamma_2 (P g^\epsilon)+\lt\{\Gamma^\epsilon(g^\epsilon)-\Gamma^\epsilon(P g^\epsilon)\rt\} +\lt\{\Gamma^\epsilon(P g^\epsilon)-\Gamma_2 (P g^\epsilon)\rt\}.
\]
The second term on the right-hand side contains at least one microscopic factor $(I-P)g^\epsilon$, while the third term contains an explicit factor $\epsilon$ arising from the higher-order expansion of the self-consistent macroscopic fields.

We define
\[
\calR_A^\epsilon :=\epsilon\pa_t\lal g^\epsilon,\widetilde A \ral_{L^2_p}+\lal p\cdot\nabla_x(I-P)g^\epsilon,\widetilde A \ral_{L^2_p} -\lal\Gamma^\epsilon(g^\epsilon)-\Gamma^\epsilon(P g^\epsilon),\widetilde A \ral_{L^2_p} -\lal\Gamma^\epsilon(P g^\epsilon)-\Gamma_2 (P g^\epsilon),\widetilde A \ral_{L^2_p}
\]
and
\[
\calR_B^\epsilon :=\epsilon\pa_t\lal g^\epsilon,\widetilde B \ral_{L^2_p}+\lal p\cdot\nabla_x(I-P)g^\epsilon,\widetilde B \ral_{L^2_p} -\lal\Gamma^\epsilon(g^\epsilon)-\Gamma^\epsilon(P g^\epsilon),\widetilde B \ral_{L^2_p} -\lal\Gamma^\epsilon(P g^\epsilon)-\Gamma_2 (P g^\epsilon),\widetilde B \ral_{L^2_p}.
\]
Combining Lemma \ref{lem:tr-flux}, \eqref{eq:Gamma2-A-fin}, and \eqref{eq:Gamma2-B-fin}, we recover \eqref{eq:A-exp} and \eqref{eq:B-exp}.

It remains to show that the remainder terms vanish in the sense required in Proposition \ref{prop:flux}. We first note that, by the microscopic relaxation estimate \eqref{eq:micro-relax}, $\|(I-P)g^\epsilon\|_{L^2(0,T;D,s)}\le C_{\hbar,T}\epsilon$. By Lemma \ref{lem:AB-solv}, we have $p\widetilde A$, $p\widetilde B  \in L^2_p$. Therefore, the microscopic relaxation estimate \eqref{eq:micro-relax} gives 
\bq\label{eq:tr-rem}
\| \lal p\cdot\nabla_x (I-P)g^\epsilon, \widetilde A  \ral_{L^2_p} \|_{L^2(0,T;H^{s-1}_x)} + \| \lal p\cdot\nabla_x (I-P)g^\epsilon, \widetilde B  \ral_{L^2_p} \|_{L^2(0,T;H^{s-1}_x)} \le C_{\hbar,T} \epsilon.
\eq

By Lemma \ref{lem:nl-rem}, we have
\[
 \| \lal \Gamma^\epsilon ( g^\epsilon ) - \Gamma^\epsilon ( P g^\epsilon ), \widetilde A  \ral_{L^2_p} \|_{L^2(0,T;H^{s-1}_x)}  + \| \lal \Gamma^\epsilon ( g^\epsilon ) - \Gamma^\epsilon ( P g^\epsilon ), \widetilde B  \ral_{L^2_p} \|_{L^2(0,T;H^{s-1}_x)} \le C_{\hbar,T} \epsilon
\]
and
\[
\| \lal \Gamma^\epsilon ( P g^\epsilon ) - \Gamma_2  ( P g^\epsilon), \widetilde A  \ral_{L^2_p} \|_{L^\infty(0,T;H^{s-1}_x)}  + \| \lal \Gamma^\epsilon ( P g^\epsilon ) - \Gamma_2  ( P g^\epsilon), \widetilde B  \ral_{L^2_p} \|_{L^\infty(0,T;H^{s-1}_x)} \le C_{\hbar,T} \epsilon.
\]

Finally, the temporal contributions vanish in the sense of distributions. Indeed, for every test function $\varphi\in C_c^\infty((0,T)\times\R^3)$,
\bq\label{eq:dt-rem}
\lt|\int_0^T\intr \epsilon\pa_t\lal g^\epsilon,\widetilde A \ral_{L^2_p}\varphi\,\rdx\rdt\rt|+\lt|\int_0^T\intr \epsilon\pa_t\lal g^\epsilon,\widetilde B \ral_{L^2_p}\varphi\,\rdx\rdt\rt|\le C_{\hbar,\varphi,T}\epsilon.
\eq
Combining \eqref{eq:tr-rem}--\eqref{eq:dt-rem}, we conclude that
\[
\nabla_x\cdot\calR_A^\epsilon\to0, \quad \nabla_x\cdot\calR_B^\epsilon\to0
\]
in the sense of distributions on $(0,T)\times\R^3$. This completes the proof of Proposition \ref{prop:flux}.

%
%
%
%
%
%
%
%
%
%
\subsection{Passage to the limit}

We now pass to the limit $\epsilon\to0$ in the macroscopic equations and complete the proof of Theorem \ref{thm:NSF-limit}. Recall that
\[
m_2 =\frac{1}{3}\intr |p|^2\mu_\hbar\,\rdp,
\]
and that the quantum viscosity and thermal diffusivity are defined by
\[
\nu_\hbar=-\frac{1}{10m_2 }\lal\widetilde A ,A \ral_{L^2_p}>0,
\quad
\kappa_\hbar=\frac{2}{(1+\beta)\phi}\chi_\hbar = -\frac{4}{3(1+\beta )m_0\beta\phi ^2}\lal\widetilde B ,B \ral_{L^2_p}>0.
\]

We first derive the limiting momentum equation. Dividing the second equation of \eqref{eq:loc-bal} by $\epsilon$ and applying the Leray projection $\mP$, we obtain
\bq\label{eq:Pu-limit-eq}
\pa_t\mP\mfu^\epsilon+\mP\nabla_x\cdot\lt(\frac{1}{\epsilon}\lal g^\epsilon,A \ral_{L^2_p}\rt)=0.
\eq
Using the constitutive relation \eqref{eq:A-exp}, we rewrite \eqref{eq:Pu-limit-eq} as
\bq\label{eq:Pu-rew}
\pa_t\mP\mfu^\epsilon+\frac{1}{m_2 }\mP\nabla_x\cdot\lt(\mfu^\epsilon\otimes\mfu^\epsilon\rt)
=\nu_\hbar\mP\nabla_x\cdot\Sigma(\mfu^\epsilon)-\mP\nabla_x\cdot\calR_A^\epsilon.
\eq
The isotropic term disappears due to $\mP\nabla_x|\mfu^\epsilon|^2=0$.

By \eqref{eq:u-strong}, \eqref{eq:R-flux}, and \eqref{eq:u-conv}, we may pass to the limit in \eqref{eq:Pu-rew}. Since $\nabla_x\cdot\mfu=0$, we have
\[
\mP\mfu=\mfu, \quad \nabla_x\cdot\Sigma(\mfu)=\Delta_x\mfu, \quad \nabla_x\cdot\lt(\mfu\otimes\mfu\rt)=\mfu\cdot\nabla_x\mfu
\]
Therefore,
\[
\pa_t\mfu+\frac{1}{m_2 }\mP\lt(\mfu\cdot\nabla_x\mfu\rt)=\nu_\hbar\Delta_x\mfu.
\]
Equivalently, by the Helmholtz--Leray decomposition, there exists a scalar function $\mathfrak p=\mathfrak p(t,x)$, unique up to an additive function of time, such that
\bq\label{eq:NS-mom}
\pa_t\mfu+\frac{1}{m_2 }\mfu\cdot\nabla_x\mfu+\nabla_x\mathfrak p=\nu_\hbar\Delta_x\mfu.
\eq

We next derive the limiting temperature equation. Recall the thermal mode
\[
q^\epsilon=\frac{\vth^\epsilon-\beta \vrh^\epsilon}{1+\beta }.
\]
By \eqref{eq:q-eq},
\bq\label{eq:q-limit-eq}
\pa_tq^\epsilon+\frac{2}{(1+\beta )\phi }\nabla_x\cdot\lt(\frac{1}{\epsilon}\lal g^\epsilon,B \ral_{L^2_p}\rt)=0.
\eq
Substituting the constitutive relation \eqref{eq:B-exp} into \eqref{eq:q-limit-eq}, we obtain
\bq\label{eq:q-rew}
\pa_tq^\epsilon+\frac{1}{m_2 }\nabla_x\cdot\lt(\mfu^\epsilon\vth^\epsilon\rt) = \kappa_\hbar\Delta_x\vth^\epsilon-\frac{\frac{5\phi }{3}-\psi}{(1+\beta )\phi  m_2 }\nabla_x\cdot\lt\{\lt(\vrh^\epsilon+\vth^\epsilon\rt)\mfu^\epsilon\rt\} -\frac{2}{(1+\beta )\phi }\nabla_x\cdot\calR_B^\epsilon.
\eq
By \eqref{eq:theta-strong}, \eqref{eq:R-flux}, and \eqref{eq:u-theta}, all terms in \eqref{eq:q-rew} except for the acoustic correction can be passed to the limit directly.

We now consider the remaining term. By \eqref{eq:ac-strong} and the uniform bound \eqref{eq:hydro-bound}, we have
\bq\label{eq:ac-prod-zero}
\lt( \vrh^\epsilon + \vth^\epsilon \rt) \mfu^\epsilon \to 0 \quad \text{strongly in } L^2 ( 0,T; H^{s-1-\zeta}_{\rm loc} (\R^3) )
\eq
for every $0<\zeta<1$. Indeed,
\[
\vrh^\epsilon + \vth^\epsilon \to 0 \quad \text{strongly in } L^2 ( 0,T; H^{s-1-\zeta}_{\rm loc} (\R^3) ),
\]
whereas $\mfu^\epsilon$ is uniformly bounded in $L^\infty \lt( 0,T; H^s(\R^3) \rt)$. Since $s\ge4$, the Sobolev product estimate yields \eqref{eq:ac-prod-zero}.

Therefore, passing to the limit in \eqref{eq:q-rew}, using $q^\epsilon \to \vth$ and $\nabla_x \cdot \mfu = 0$, we obtain
\bq\label{eq:NSF-temp} 
\pa_t \vth + \frac{1}{m_2 } \mfu \cdot \nabla_x \vth = \kappa_\hbar \Delta_x \vth.
\eq

Combining \eqref{eq:inc}, \eqref{eq:NS-mom}, and \eqref{eq:NSF-temp}, we conclude that $(\mfu,\vth)$ satisfies the incompressible Navier--Stokes--Fourier system
\[
\begin{cases}
\displaystyle \pa_t\mfu+\frac{1}{m_2 }\mfu\cdot\nabla_x\mfu+\nabla_x\mathfrak p=\nu_\hbar\Delta_x\mfu,\\[2mm]
\displaystyle \nabla_x\cdot\mfu=0,\\[2mm]
\displaystyle \pa_t\vth+\frac{1}{m_2 }\mfu\cdot\nabla_x\vth=\kappa_\hbar\Delta_x\vth.
\end{cases}
\]

We finally identify the initial data. Assume that there exist $(\vrh_0,\mfu_0,\vth_0)\in H^{s-1}(\R^3)\times H^{s-1}(\R^3;\R^3)\times H^{s-1}(\R^3)$ such that $\lt(\vrh^\epsilon_0,\mfu^\epsilon_0,\vth^\epsilon_0\rt)\to\lt(\vrh_0,\mfu_0,\vth_0\rt)$ strongly in $H^{s-1}_x$.  The convergences \eqref{eq:theta-strong} and \eqref{eq:u-strong} imply
\[
\mfu|_{t=0}=\mP\mfu_0, \quad \vth|_{t=0}=\frac{\vth_0-\beta \vrh_0}{1+\beta }.
\]

Moreover, the Boussinesq relation \eqref{eq:bous} gives $\vrh=-\vth$. Hence, the limiting infinitesimal quantum equilibrium \eqref{eq:inf-qe} takes the form
\[
g(t,x,p)=\lt\{\frac{\mfu(t,x)\cdot p}{m_2 }+\vth(t,x)\lt[\frac{1}{m_0\beta}\lt(\frac{|p|^2}{\phi }-1\rt)-\frac{1}{m_0 }\rt]\rt\}\sqrt{\mu_\hbar(p)}.
\]

%
%
%
%
%
%
%
%
%
%

\section*{Acknowledgments}
 The work of Y.-P. Choi and J.-H. Hyun was supported by NRF grant no. 2022R1A2C1002820 and no. RS-2024-00406821. The work of B.-H. Hwang was supported by the National Research Foundation of Korea(NRF) grant funded by the Korean government(MSIT) (No.RS-2026-25475225).

%
%
%
%
%
%
%
%
%
%
\appendix

\section{A local dispersive estimate for fast acoustic waves}\label{app:acoustic}

In this appendix, we prove Lemma \ref{lem:abs-disp}. The argument is based on a frequency decomposition and the localized decay of the free half-wave propagator in the whole space. Related acoustic-dispersion arguments in incompressible limits can be found, for instance, in \cite{MS01}; see also \cite{Sch94} for the general framework of fast singular limits for hyperbolic systems.

We write
\[
\mathfrak U^\epsilon :=
\begin{pmatrix}
\mathfrak r^\epsilon\\
\mathfrak v^\epsilon
\end{pmatrix},
\quad
\mathfrak H^\epsilon :=
\begin{pmatrix}
\mathfrak f^\epsilon\\
\mathfrak g^\epsilon
\end{pmatrix},
\]
and define
\[
\mathsf A
\begin{pmatrix}
\mathfrak r\\
\mathfrak v
\end{pmatrix}
:=
\begin{pmatrix}
a\nabla_x\cdot\mathfrak v\\
b\nabla_x\mathfrak r
\end{pmatrix}.
\]
Then, \eqref{eq:abs-ac} can be written as
\[
\pa_t\mathfrak U^\epsilon + \frac{1}{\epsilon} \mathsf A \mathfrak U^\epsilon = \mathfrak H^\epsilon.
\]

We equip $L^2_x(\R^3;\R) \times L^2_x(\R^3;\R^3)$ with the weighted inner product
\bq\label{eq:app-ac-inner}
\lt\lal \begin{pmatrix} \mathfrak r_1\\ \mathfrak v_1 \end{pmatrix}, \begin{pmatrix} \mathfrak r_2\\ \mathfrak v_2 \end{pmatrix}
\rt\ral_{a,b} := b \lal \mathfrak r_1, \mathfrak r_2 \ral_{L^2_x} + a \lal \mathfrak v_1, \mathfrak v_2 \ral_{L^2_x}.
\eq
An integration by parts gives, for smooth compactly supported pairs
$U=(\mathfrak r,\mathfrak v)$ and $V=(\mathfrak s,\mathfrak w)$,
\[
\lt\lal \mathsf A U,V\rt\ral_{a,b} = -\lt\lal U,\mathsf A V\rt\ral_{a,b}.
\]
In particular, $\lt\lal \mathsf A U,U\rt\ral_{a,b}=0$. Thus, $\mathsf A$ is skew-symmetric with respect to \eqref{eq:app-ac-inner}. Its Fourier symbol shows that its closure on the natural domain is skew-adjoint. Consequently, $\mathsf A$ generates a unitary group $e^{-t\mathsf A}$.

%
%
%
%
%
%
%
%
%
%
 
\subsection{Diagonalization of the acoustic operator} 

We first identify the oscillatory modes of the acoustic operator. For $\xi\in\R^3$, the Fourier symbol of $\mathsf A$ is given by
\bq \label{eq:app-ac-symbol}
\widehat{\mathsf A}(\xi)
\begin{pmatrix}
\widehat{\mathfrak r}\\
\widehat{\mathfrak v}
\end{pmatrix}
=
\begin{pmatrix}
ia\xi\cdot\widehat{\mathfrak v}\\
ib\xi\widehat{\mathfrak r}
\end{pmatrix}.
\eq
For $\xi\neq0$, let $\omega := \frac{\xi}{|\xi|}$. Since $\mathfrak v$ is a gradient vector field, its Fourier transform is parallel to $\xi$. Hence, we may write
\[
\widehat{\mathfrak v}(\xi) = \omega \widehat{\mathfrak w}(\xi), \quad
\widehat{\mathfrak w}(\xi) := \omega \cdot \widehat{\mathfrak v}(\xi).
\]
Since $|\omega|=1$, we get
\[
|\widehat{\mathfrak v}(\xi)|=|\widehat{\mathfrak w}(\xi)|.
\]
Consequently, the identification
\[
(\mathfrak r,\mathfrak v)\longleftrightarrow(\mathfrak r,\mathfrak w)
\]
preserves the $H^k_x$-norm on the longitudinal scalar--gradient subspace for every $k\in\R$.

The first component of \eqref{eq:app-ac-symbol} becomes
\[
ia\xi\cdot\widehat{\mathfrak v} = ia|\xi|\omega\cdot\lt(\omega\widehat{\mathfrak w}\rt) = ia|\xi|\widehat{\mathfrak w},
\]
while the second component is
\[
ib\xi\widehat{\mathfrak r} = ib|\xi|\omega\widehat{\mathfrak r}.
\]
Thus, on the space of gradient vector fields, the vector component remains parallel to $\omega$, and \eqref{eq:app-ac-symbol} reduces to the scalar two-by-two system
\[
\begin{pmatrix}
\widehat{\mathfrak r}\\
\widehat{\mathfrak w}
\end{pmatrix}
\longmapsto i|\xi| \mathsf M
\begin{pmatrix}
\widehat{\mathfrak r}\\
\widehat{\mathfrak w}
\end{pmatrix},
\quad
\mathsf M :=
\begin{pmatrix}
0 & a\\
b & 0
\end{pmatrix}.
\]
Since $\mathsf M^2 = ab\, \mathbb I_2$, the eigenvalues of the reduced symbol are $\pm ic_{\rm ac} |\xi|$, $c_{\rm ac} := \sqrt{ab}$.  Thus, the corresponding acoustic waves propagate with speed $\frac{c_{\rm ac}}\epsilon$.

The restriction to gradient vector fields is essential for the dispersive argument. Indeed, for a general vector field $\mathfrak v$, we may decompose
\[
\widehat{\mathfrak v} = \omega\lt(\omega\cdot\widehat{\mathfrak v}\rt) + \widehat{\mathfrak v}_\perp, \quad \omega\cdot\widehat{\mathfrak v}_\perp=0.
\]
The transverse part is annihilated by the acoustic symbol:
\[
\widehat{\mathsf A}(\xi)
\begin{pmatrix}
0\\
\widehat{\mathfrak v}_\perp
\end{pmatrix}
=
\begin{pmatrix}
ia\xi\cdot\widehat{\mathfrak v}_\perp\\
0
\end{pmatrix}
= 0.
\]
Thus the transverse component belongs to the zero eigenspace of the acoustic operator and does not generate fast oscillations. In the present acoustic system, however, the vector component is a gradient field, so this transverse zero mode is absent. Hence only the two oscillatory longitudinal modes with eigenvalues $\pm ic_{\rm ac}|\xi|$ remain.

Since $\mathsf M^2=c_{\rm ac}^2\mathbb I_2$, the spectral projections of $\mathsf M$ associated with the eigenvalues $\pm c_{\rm ac}$ are given by the polynomial formula
\[
\mathsf P_\pm:=\frac{1}{2}\lt(\mathbb I_2\pm\frac{\mathsf M}{c_{\rm ac}}\rt).
\]
They are also the spectral projections of the reduced symbol $i|\xi|\mathsf M$, whose eigenvalues are $\pm ic_{\rm ac}|\xi|$. Since $c_{\rm ac}=\sqrt{ab}$, we have
\[
\mathsf P_\pm = \frac{1}{2}
\begin{pmatrix}
1 & \displaystyle \pm \sqrt{\frac{a}{b}} \\[2mm]
\displaystyle \pm \sqrt{\frac{b}{a}} & 1
\end{pmatrix}.
\]
Note that
\bq\label{eq:p-prop}
\mathsf P_+ + \mathsf P_- = \mathbb I_2, \quad \mathsf P_\pm^2 = \mathsf P_\pm, \quad \mathsf P_+ \mathsf P_- = 0.
\eq

To lift these projections back to the original scalar--vector variables, define
\[
\mathsf T_\omega
\begin{pmatrix}
\mathfrak r\\
\mathfrak w
\end{pmatrix} :=
\begin{pmatrix}
\mathfrak r\\
\omega \mathfrak w
\end{pmatrix},
\quad
\mathsf S_\omega
\begin{pmatrix}
\mathfrak r\\ 
\mathfrak v
\end{pmatrix} := 
\begin{pmatrix}
\mathfrak r\\ 
\omega \cdot \mathfrak v
\end{pmatrix}.
\]
Then 
\[ 
\mathsf S_\omega\mathsf T_\omega=\mathbb I_2, \quad 
\mathsf T_\omega\mathsf S_\omega 
\begin{pmatrix} \mathfrak r \\ 
\mathfrak v 
\end{pmatrix}  
= \begin{pmatrix} 
\mathfrak r \\ 
\omega\otimes\omega\,\mathfrak v 
\end{pmatrix}. 
\] 
Thus, $\mathsf T_\omega\mathsf S_\omega$ is the projection onto the longitudinal, or scalar--gradient, subspace 
\[ 
\widehat{\mathfrak v}(\xi) = \omega\lt(\omega\cdot\widehat{\mathfrak v}(\xi)\rt). 
\]
 Moreover, $\mathsf T_\omega$ and $\mathsf S_\omega$ preserve the corresponding weighted norms on this subspace due to $|\omega|=1$.

The matrix-valued Fourier multipliers $\Pi_\pm$ are defined by
\[
\widehat{ \Pi_\pm \mathfrak U }(\xi) := \widehat{\Pi_\pm}(\xi) \widehat{\mathfrak U}(\xi), \quad \widehat{\Pi_\pm}(\xi) := \mathsf T_\omega \mathsf P_\pm \mathsf S_\omega.
\]
Equivalently,
\[
\widehat{\Pi_\pm}(\xi) = \frac{1}{2}
\begin{pmatrix}
1 & \displaystyle \pm \sqrt{\frac{a}{b}} \, \omega^{\mathsf T} \\[2mm]
\displaystyle \pm \sqrt{\frac{b}{a}} \, \omega & \omega \otimes \omega
\end{pmatrix},
\quad \xi\neq0.
\]
Using $\mathsf S_\omega\mathsf T_\omega=\mathbb I_2$ and \eqref{eq:p-prop}, we have 
\[ 
\widehat{\Pi_\pm^2}(\xi) = \mathsf T_\omega\mathsf P_\pm\mathsf S_\omega \mathsf T_\omega\mathsf P_\pm\mathsf S_\omega = \mathsf T_\omega\mathsf P_\pm^2\mathsf S_\omega = \widehat{\Pi_\pm}(\xi), 
\] 
and 
\[ 
\widehat{\Pi_+\Pi_-}(\xi) = \mathsf T_\omega\mathsf P_+\mathsf P_-\mathsf S_\omega =0. 
\] 
Moreover, 
\[ 
\widehat{\Pi_+ + \Pi_-}(\xi) = \mathsf T_\omega(\mathsf P_+ + \mathsf P_-)\mathsf S_\omega = \mathsf T_\omega\mathsf S_\omega. 
\] 
Therefore, on the longitudinal scalar--gradient subspace, 
\[ 
\Pi_+ + \Pi_- = I,\quad \Pi_\pm^2=\Pi_\pm,\quad \Pi_+\Pi_-=0. 
\]

We set
\[
\Lambda_x := (-\Delta_x)^{1/2}, \quad \widehat{ \Lambda_x h }(\xi) := |\xi| \widehat h(\xi).
\]
Let $\calJ_{\delta,N}$ be a smooth Fourier multiplier whose symbol is
supported in
\[
\lt\{ \xi\in\R^3 : \frac{\delta}{2} \le |\xi| \le 2N \rt\}, \quad 0<\delta<1<N.
\]
Since the symbol of $\calJ_{\delta,N}$ is supported away from $\xi=0$, the symbols of $\Pi_\pm \calJ_{\delta,N}$ are smooth and bounded. Therefore, $\Pi_\pm\calJ_{\delta,N}$ are bounded Fourier multipliers on $H^k_x$ for every $k\ge0$. Moreover, on the scalar--gradient subspace,
\bq\label{eq:app-ac-group-dec}
e^{-\frac{t}{\epsilon}\mathsf A} \calJ_{\delta,N} = e^{-ic_{\rm ac}t\Lambda_x/\epsilon} \Pi_+ \calJ_{\delta,N} + e^{ic_{\rm ac}t\Lambda_x/\epsilon} \Pi_- \calJ_{\delta,N}.
\eq

%
%
%
%
%
%
%
%
%
%
\subsection{A localized estimate for the half-wave propagator}

We next establish the localized estimate needed for the middle-frequency component. Let $\chi \in C_c^\infty(\R^3)$ and let $h\in L^2_x$. We claim that
\bq\label{eq:app-half-wave-L2}
\| \chi e^{\pm ic_{\rm ac}s\Lambda_x} \calJ_{\delta,N} h \|_{L^2(\R_s\times\R^3_x)} \le C_{\chi,N} \| h \|_{L^2_x}.
\eq
The constant may depend on $c_{\rm ac}$, which is fixed throughout the argument.

To prove \eqref{eq:app-half-wave-L2}, set
\[
z(s,x) := e^{\pm ic_{\rm ac}s\Lambda_x} \calJ_{\delta,N} h(x).
\]
Using polar coordinates $\xi = \varrho \omega$, $\varrho>0$, $\omega\in\mathbb S^2$, we write
\[
z(s,x) = \int_0^\infty e^{\pm ic_{\rm ac}s\varrho} \varrho^2 \lt( \int_{\mathbb S^2} e^{i\varrho x\cdot\omega} \widehat{ \calJ_{\delta,N}h } (\varrho\omega) \,\textnormal{d}\omega \rt) \textnormal{d}\varrho.
\]
For each fixed $x\in\R^3$, Plancherel's theorem in the $s$-variable gives
\[
\int_{\R} |z(s,x)|^2 \,\ds \le C_{c_{\rm ac}} \int_0^\infty \varrho^4 \lt| \int_{\mathbb S^2} e^{i\varrho x\cdot\omega} \widehat{ \calJ_{\delta,N}h } (\varrho\omega) \,\textnormal{d}\omega \rt|^2 \,\textnormal{d}\varrho
\le C_{c_{\rm ac}} \int_0^\infty \varrho^4 \int_{\mathbb S^2} \lt| \widehat{ \calJ_{\delta,N}h } (\varrho\omega) \rt|^2 \,\textnormal{d}\omega \,\textnormal{d}\varrho,
\]
where we used the Cauchy--Schwarz inequality on $\mathbb S^2$. Since the Fourier support of $\calJ_{\delta,N}h$ is contained in
\[
\lt\{ \xi\in\R^3 : \frac{\delta}{2} \le |\xi| \le 2N \rt\},
\]
we have $\varrho^4 \le 4N^2 \varrho^2$ on this support. Hence,
\[
\int_{\R} |z(s,x)|^2 \,\ds \le C_{N,c_{\rm ac}} \| h \|_{L^2_x}^2.
\]
Multiplying by $|\chi(x)|^2$ and integrating in $x$, we obtain \eqref{eq:app-half-wave-L2}.

Applying the same argument to spatial derivatives and using the Leibniz rule, we obtain
\[
\| \chi e^{\pm ic_{\rm ac}s\Lambda_x} \calJ_{\delta,N} h \|_{L^2(\R_s;H^k_x)} \le C_{\chi,N,k} \| h \|_{H^k_x}
\]
for every nonnegative integer $k$.

We now rescale time by setting $s := \frac{t}{\epsilon}$. Then,
\[
\| \chi e^{\pm ic_{\rm ac}t\Lambda_x/\epsilon} \calJ_{\delta,N} h \|_{L^2(0,T;H^k_x)}^2 = \epsilon \int_0^{T/\epsilon} \| \chi e^{\pm ic_{\rm ac}s\Lambda_x} \calJ_{\delta,N} h \|_{H^k_x}^2 \,\ds \le C_{\chi,N,k} \epsilon \| h \|_{H^k_x}^2.
\]
Taking the square root, applying the estimate componentwise to $\Pi_\pm\calJ_{\delta,N}\mathfrak U_0$, and using the boundedness of $\Pi_\pm\calJ_{\delta,N}$ on $H^k_x$, we obtain from \eqref{eq:app-ac-group-dec} that
\bq\label{eq:app-mid-loc}
\| \chi e^{-\frac{t}{\epsilon}\mathsf A} \calJ_{\delta,N} \mathfrak U_0 \|_{L^2(0,T;H^k_x)} \le C_{\chi,\delta,N,k} \epsilon^{1/2} \| \mathfrak U_0 \|_{H^k_x}.
\eq

%
%
%
%
%
%
%
%
%
%
\subsection{Frequency decomposition and proof of Lemma \ref{lem:abs-disp}}

Let $K \subset \R^3$ be compact, and choose $\chi \in C_c^\infty(\R^3)$ such that $\chi = 1$ on $K$. We introduce smooth Fourier multipliers
\[
\calJ_{<\delta}, \quad \calJ_{\delta,N}, \quad \calJ_{>N}
\]
such that
\[
I = \calJ_{<\delta} + \calJ_{\delta,N} + \calJ_{>N},
\]
where their symbols are supported in
\[
\lt\{ |\xi| \le 2\delta \rt\}, \quad \lt\{ \frac{\delta}{2} \le |\xi| \le 2N \rt\}, \quad \lt\{ |\xi| \ge \frac{N}{2} \rt\},
\]
respectively.

We first consider the middle-frequency part. By Duhamel's formula,
\[
\mathfrak U^\epsilon(t) = e^{-\frac{t}{\epsilon}\mathsf A} \mathfrak U^\epsilon(0) + \int_0^t e^{-\frac{t-\tau}{\epsilon}\mathsf A} \mathfrak H^\epsilon(\tau) \,\textnormal{d}\tau.
\]
Using \eqref{eq:app-mid-loc} with $k=m-1$, we obtain
\[
\| \chi e^{-\frac{t}{\epsilon}\mathsf A} \calJ_{\delta,N} \mathfrak U^\epsilon(0) \|_{L^2(0,T;H^{m-1}_x)} \le C_{\chi,\delta,N,m} \epsilon^{1/2} \| \mathfrak U^\epsilon(0) \|_{H^{m-1}_x}.
\]
Applying \eqref{eq:app-mid-loc} for each fixed $\tau$ and using Minkowski's inequality, we also obtain
\begin{align*}
\lt\| \chi \int_0^t e^{-\frac{t-\tau}{\epsilon}\mathsf A} \calJ_{\delta,N} \mathfrak H^\epsilon(\tau) \,\textnormal{d}\tau \rt\|_{L^2(0,T;H^{m-1}_x)}  &\le C_{\chi,\delta,N,m} \epsilon^{1/2} \| \mathfrak H^\epsilon \|_{L^1(0,T;H^{m-1}_x)} \\
& \le C_{\chi,\delta,N,m,T} \epsilon^{1/2} \| \mathfrak H^\epsilon \|_{L^2(0,T;H^{m-1}_x)}.
\end{align*}
Therefore,
\bq\label{eq:app-mid-zero}
\chi \calJ_{\delta,N} \mathfrak U^\epsilon \to 0 \quad \text{strongly in } L^2 \lt( 0,T; H^{m-1}_x \rt)
\eq
as $\epsilon\to0$ for each fixed $\delta$ and $N$.

We next estimate the low-frequency part. Set $q := m-1-\zeta$. Let $\varphi_{<\delta}$ denote the symbol of $\calJ_{<\delta}$. Since multiplication by $\chi$ corresponds to convolution with $\widehat\chi$ in the Fourier variable, we have
\[
\widehat{ \chi \calJ_{<\delta} \mathfrak U^\epsilon }(\xi) = \intr  \widehat\chi(\xi-\eta) \varphi_{<\delta}(\eta) \widehat{ \mathfrak U^\epsilon }(\eta) \,\textnormal{d}\eta.
\]
Thus, 
\[
\|\chi\calJ_{<\delta}\mathfrak U^\epsilon(t)\|_{H^q_x} = \lt\| \langle\xi\rangle^q \intr  \widehat\chi(\xi-\eta) \varphi_{<\delta}(\eta) \widehat{\mathfrak U^\epsilon}(t,\eta) \,\textnormal{d}\eta \rt\|_{L^2_\xi}. 
\]
By Peetre's inequality, $\langle\xi\rangle^q \le C_q \langle\xi-\eta\rangle^{|q|} \langle\eta\rangle^q$. Since $|\eta| \le 2\delta \le 2$ on the support of $\varphi_{<\delta}$, the factor $\langle\eta\rangle^q$ is uniformly bounded. This yields
\[
\|\chi\calJ_{<\delta}\mathfrak U^\epsilon(t)\|_{H^q_x}  \le C_q \left\| \intr  \langle\xi-\eta\rangle^{|q|} |\widehat\chi(\xi-\eta)| |\varphi_{<\delta}(\eta)\widehat{\mathfrak U^\epsilon}(t,\eta)| \,\textnormal{d}\eta \right\|_{L^2_\xi}  \le C_{\chi,q} \|\varphi_{<\delta}\widehat{\mathfrak U^\epsilon}(t)\|_{L^1_\xi}, 
\]
where we used Young's convolution inequality and the fact that $\lal\cdot\ral^{|q|}\widehat\chi\in L^2_\xi$. By the Cauchy--Schwarz inequality and the support condition $\operatorname{supp}\varphi_{<\delta}\subset\{|\xi|\le2\delta\}$, 
\[
\| \varphi_{<\delta} \widehat{\mathfrak U^\epsilon }(t) \|_{L^1_\xi} \le \lt| \lt\{ \xi\in\R^3 : |\xi| \le 2\delta \rt\} \rt|^{1/2} \| \widehat{ \mathfrak U^\epsilon }(t) \|_{L^2_\xi} \le C \delta^{3/2} \| \mathfrak U^\epsilon(t) \|_{L^2_x}.
\]
Consequently,
\[
\| \chi \calJ_{<\delta} \mathfrak U^\epsilon(t) \|_{H^{m-1-\zeta}_x} \le C_{\chi,m,\zeta} \delta^{3/2} \| \mathfrak U^\epsilon(t) \|_{L^2_x}.
\]
Integrating in time, we obtain 
\bq\label{eq:app-low-bnd} \|\chi\calJ_{<\delta}\mathfrak U^\epsilon\|_{L^2(0,T;H^{m-1-\zeta}_x)} \le C_{\chi,T,m,\zeta}\delta^{3/2} \|\mathfrak U^\epsilon\|_{L^\infty(0,T;L^2_x)} \le C_{\chi,T,m,\zeta} \delta^{3/2} \| \mathfrak U^\epsilon \|_{L^\infty(0,T;H^m_x)}. 
\eq 
 
We finally estimate the high-frequency part. This estimate is complementary to the low-frequency estimate: here the smallness comes from the gain obtained by measuring the high-frequency tail in the lower Sobolev norm $H^{m-1-\zeta}_x$. Since multiplication by $\chi$ is bounded on $H^{m-1-\zeta}_x$, we have
\[
\| \chi \calJ_{>N} \mathfrak U^\epsilon(t) \|_{H^{m-1-\zeta}_x} \le C_{\chi,m,\zeta} \| \calJ_{>N} \mathfrak U^\epsilon(t) \|_{H^{m-1-\zeta}_x}.
\]
On the Fourier support of $\calJ_{>N}$, we have $|\xi| \ge \frac{N}{2}$. Thus, $\langle\xi\rangle^{m-1-\zeta} \le C N^{-1-\zeta} \langle\xi\rangle^m$. By Plancherel's theorem,
\[
\| \calJ_{>N} \mathfrak U^\epsilon(t) \|_{H^{m-1-\zeta}_x} \le C N^{-1-\zeta} \| \mathfrak U^\epsilon(t) \|_{H^m_x}.
\]
Integrating in time, we obtain
\bq\label{eq:app-high-bnd}
\| \chi \calJ_{>N} \mathfrak U^\epsilon \|_{L^2(0,T;H^{m-1-\zeta}_x)} \le C_{\chi,T,m,\zeta} N^{-1-\zeta} \| \mathfrak U^\epsilon \|_{L^\infty(0,T;H^m_x)}.
\eq

Combining \eqref{eq:app-mid-zero}, \eqref{eq:app-low-bnd}, and \eqref{eq:app-high-bnd}, we obtain
\[
\limsup_{\epsilon\to0} \| \mathfrak U^\epsilon \|_{L^2(0,T;H^{m-1-\zeta}(K))} \le C_{\chi,T,m,\zeta} \lt( \delta^{3/2} + N^{-1-\zeta} \rt).
\]
Letting $\delta \to 0$ and then $N \to \infty$, we conclude that
\[
\mathfrak U^\epsilon \to 0 \quad \text{strongly in } L^2 ( 0,T; H^{m-1-\zeta}(K)).
\]
Since $K \subset \R^3$ is arbitrary, this proves \eqref{eq:abs-zero}.

%
%
%
%
%
%
%
%
%
%

\bibliographystyle{abbrv}
\bibliography{NSF_NQFP_ref}

@article {Kan95,
    AUTHOR = {Kaniadakis, G.},
     TITLE = {Generalized {B}oltzmann equation describing the dynamics of
              bosons and fermions},
   JOURNAL = {Phys. Lett. A},
  FJOURNAL = {Physics Letters. A},
    VOLUME = {203},
      YEAR = {1995},
    NUMBER = {4},
     PAGES = {229--234},
      ISSN = {0375-9601},
   MRCLASS = {82C40},
  MRNUMBER = {1340369},
MRREVIEWER = {Rossana Marra},
       DOI = {10.1016/0375-9601(95)00414-X},
       URL = {https://doi.org/10.1016/0375-9601(95)00414-X},
}

@article {NS07,
    AUTHOR = {Neumann, Lukas and Sparber, Christof},
     TITLE = {Stability of steady states in kinetic {F}okker-{P}lanck
              equations for bosons and fermions},
   JOURNAL = {Commun. Math. Sci.},
  FJOURNAL = {Communications in Mathematical Sciences},
    VOLUME = {5},
      YEAR = {2007},
    NUMBER = {4},
     PAGES = {765--777},
      ISSN = {1539-6746},
   MRCLASS = {82C40 (76P05 82C10 82C31)},
  MRNUMBER = {2375045},
MRREVIEWER = {Silvia Lorenzani},
       DOI = {10.4310/cms.2007.v5.n4.a1},
       URL = {https://doi.org/10.4310/cms.2007.v5.n4.a1},
}

@article {LZ15,
    AUTHOR = {Luo, Lan and Zhang, Xinping},
     TITLE = {Global classical solutions for quantum kinetic
              {F}okker-{P}lanck equations},
   JOURNAL = {Acta Math. Sci. Ser. B (Engl. Ed.)},
  FJOURNAL = {Acta Mathematica Scientia. Series B. English Edition},
    VOLUME = {35},
      YEAR = {2015},
    NUMBER = {1},
     PAGES = {140--156},
      ISSN = {0252-9602},
   MRCLASS = {35Q82 (35A09 35B40 35Q84)},
  MRNUMBER = {3283244},
       DOI = {10.1016/S0252-9602(14)60147-8},
       URL = {https://doi.org/10.1016/S0252-9602(14)60147-8},
}

@article {DR01,
    AUTHOR = {Desvillettes, L. and Ricci, V.},
     TITLE = {A rigorous derivation of a linear kinetic equation of
              {F}okker-{P}lanck type in the limit of grazing collisions},
   JOURNAL = {J. Statist. Phys.},
  FJOURNAL = {Journal of Statistical Physics},
    VOLUME = {104},
      YEAR = {2001},
    NUMBER = {5-6},
     PAGES = {1173--1189},
      ISSN = {0022-4715},
   MRCLASS = {82C40 (82C31)},
  MRNUMBER = {1859001},
MRREVIEWER = {Carlo Cercignani},
       DOI = {10.1023/A:1010461929872},
       URL = {https://doi.org/10.1023/A:1010461929872},
}

@article {Gou97,
    AUTHOR = {Goudon, T.},
     TITLE = {On {B}oltzmann equations and {F}okker-{P}lanck asymptotics:
              influence of grazing collisions},
   JOURNAL = {J. Statist. Phys.},
  FJOURNAL = {Journal of Statistical Physics},
    VOLUME = {89},
      YEAR = {1997},
    NUMBER = {3-4},
     PAGES = {751--776},
      ISSN = {0022-4715},
   MRCLASS = {82C40 (82C31)},
  MRNUMBER = {1484062},
MRREVIEWER = {Yan Guo},
       DOI = {10.1007/BF02765543},
       URL = {https://doi.org/10.1007/BF02765543},
}

@article {Vil98,
    AUTHOR = {Villani, C.},
     TITLE = {On the spatially homogeneous {L}andau equation for
              {M}axwellian molecules},
   JOURNAL = {Math. Models Methods Appl. Sci.},
  FJOURNAL = {Mathematical Models and Methods in Applied Sciences},
    VOLUME = {8},
      YEAR = {1998},
    NUMBER = {6},
     PAGES = {957--983},
      ISSN = {0218-2025},
   MRCLASS = {82C40 (47N20 76P05)},
  MRNUMBER = {1646502},
MRREVIEWER = {Giuliana Lauro},
       DOI = {10.1142/S0218202598000433},
       URL = {https://doi.org/10.1142/S0218202598000433},
}

@incollection {Vil02,
    AUTHOR = {Villani, C\'edric},
     TITLE = {A review of mathematical topics in collisional kinetic theory},
 BOOKTITLE = {Handbook of mathematical fluid dynamics, {V}ol. {I}},
     PAGES = {71--305},
 PUBLISHER = {North-Holland, Amsterdam},
      YEAR = {2002},
      ISBN = {0-444-50330-7},
   MRCLASS = {82C40 (35F20 76P05 82-02)},
  MRNUMBER = {1942465},
MRREVIEWER = {Fran\c cois\ Castella},
       DOI = {10.1016/S1874-5792(02)80004-0},
       URL = {https://doi.org/10.1016/S1874-5792(02)80004-0},
}

@book {CIP94,
    AUTHOR = {Cercignani, Carlo and Illner, Reinhard and Pulvirenti, Mario},
     TITLE = {The mathematical theory of dilute gases},
    SERIES = {Applied Mathematical Sciences},
    VOLUME = {106},
 PUBLISHER = {Springer-Verlag, New York},
      YEAR = {1994},
     PAGES = {viii+347},
      ISBN = {0-387-94294-7},
   MRCLASS = {82C40 (76-02 76P05 82-02 82B40)},
  MRNUMBER = {1307620},
MRREVIEWER = {Giuseppe\ Toscani},
       DOI = {10.1007/978-1-4419-8524-8},
       URL = {https://doi.org/10.1007/978-1-4419-8524-8},
}

@misc{CHHpre,
AUTHOR = {Choi, Y.-P. and Hwang, B.-H. and Hyun, J.-H.},
TITLE = {Nonlinear quantum {F}okker--{P}lanck equation near equilibrium},
YEAR = {arXiv:2607.26433},
DOI = {arXiv:2607.26433},
}

@article {Sch94,
    AUTHOR = {Schochet, Steven},
     TITLE = {Fast singular limits of hyperbolic {PDE}s},
   JOURNAL = {J. Differential Equations},
  FJOURNAL = {Journal of Differential Equations},
    VOLUME = {114},
      YEAR = {1994},
    NUMBER = {2},
     PAGES = {476--512},
      ISSN = {0022-0396,1090-2732},
   MRCLASS = {35L60 (35B25 35L40 76C99)},
  MRNUMBER = {1303036},
MRREVIEWER = {Milton\ C.\ Lopes Filho},
       DOI = {10.1006/jdeq.1994.1157},
       URL = {https://doi.org/10.1006/jdeq.1994.1157},
}

@article {MS01,
    AUTHOR = {M\'etivier, G. and Schochet, S.},
     TITLE = {The incompressible limit of the non-isentropic {E}uler
              equations},
   JOURNAL = {Arch. Ration. Mech. Anal.},
  FJOURNAL = {Archive for Rational Mechanics and Analysis},
    VOLUME = {158},
      YEAR = {2001},
    NUMBER = {1},
     PAGES = {61--90},
      ISSN = {0003-9527,1432-0673},
   MRCLASS = {76N10 (35Q35 76B99)},
  MRNUMBER = {1834114},
MRREVIEWER = {Mariarosaria\ Padula},
       DOI = {10.1007/PL00004241},
       URL = {https://doi.org/10.1007/PL00004241},
}

@article {GS04,
    AUTHOR = {Golse, Fran\c cois and Saint-Raymond, Laure},
     TITLE = {The {N}avier-{S}tokes limit of the {B}oltzmann equation for
              bounded collision kernels},
   JOURNAL = {Invent. Math.},
  FJOURNAL = {Inventiones Mathematicae},
    VOLUME = {155},
      YEAR = {2004},
    NUMBER = {1},
     PAGES = {81--161},
      ISSN = {0020-9910,1432-1297},
   MRCLASS = {76A02 (35F20 35Q30 76D03 76D05 76P05 82C40)},
  MRNUMBER = {2025302},
MRREVIEWER = {C\'edric\ Villani},
       DOI = {10.1007/s00222-003-0316-5},
       URL = {https://doi.org/10.1007/s00222-003-0316-5},
}

@article {GS09,
    AUTHOR = {Golse, Fran\c cois and Saint-Raymond, Laure},
     TITLE = {The incompressible {N}avier-{S}tokes limit of the {B}oltzmann
              equation for hard cutoff potentials},
   JOURNAL = {J. Math. Pures Appl. (9)},
  FJOURNAL = {Journal de Math\'ematiques Pures et Appliqu\'ees. Neuvi\`eme
              S\'erie},
    VOLUME = {91},
      YEAR = {2009},
    NUMBER = {5},
     PAGES = {508--552},
      ISSN = {0021-7824},
   MRCLASS = {35Q30 (35Q35 76D05 82C40)},
  MRNUMBER = {2517786},
MRREVIEWER = {Zhaohui\ Huo},
       DOI = {10.1016/j.matpur.2009.01.013},
       URL = {https://doi.org/10.1016/j.matpur.2009.01.013},
}

@article {LM10,
    AUTHOR = {Levermore, C. David and Masmoudi, Nader},
     TITLE = {From the {B}oltzmann equation to an incompressible
              {N}avier-{S}tokes-{F}ourier system},
   JOURNAL = {Arch. Ration. Mech. Anal.},
  FJOURNAL = {Archive for Rational Mechanics and Analysis},
    VOLUME = {196},
      YEAR = {2010},
    NUMBER = {3},
     PAGES = {753--809},
      ISSN = {0003-9527,1432-0673},
   MRCLASS = {35Q20 (35Q35 76D05 76P05 82C40)},
  MRNUMBER = {2644440},
MRREVIEWER = {Francesco\ Salvarani},
       DOI = {10.1007/s00205-009-0254-5},
       URL = {https://doi.org/10.1007/s00205-009-0254-5},
}

@article {JXZ18,
    AUTHOR = {Jiang, Ning and Xu, Chao-Jiang and Zhao, Huijiang},
     TITLE = {Incompressible {N}avier-{S}tokes-{F}ourier limit from the
              {B}oltzmann equation: classical solutions},
   JOURNAL = {Indiana Univ. Math. J.},
  FJOURNAL = {Indiana University Mathematics Journal},
    VOLUME = {67},
      YEAR = {2018},
    NUMBER = {5},
     PAGES = {1817--1855},
      ISSN = {0022-2518,1943-5258},
   MRCLASS = {35Q20 (35A09 76D05 82C40)},
  MRNUMBER = {3875244},
MRREVIEWER = {Cecil\ Pompiliu\ Gr\"unfeld},
       DOI = {10.1512/iumj.2018.67.5940},
       URL = {https://doi.org/10.1512/iumj.2018.67.5940},
}

@article {CRT22,
    AUTHOR = {Carrapatoso, Kleber and Rachid, Mohamad and Tristani,
              Isabelle},
     TITLE = {Regularization estimates and hydrodynamical limit for the
              {L}andau equation},
   JOURNAL = {J. Math. Pures Appl. (9)},
  FJOURNAL = {Journal de Math\'ematiques Pures et Appliqu\'ees. Neuvi\`eme
              S\'erie},
    VOLUME = {163},
      YEAR = {2022},
     PAGES = {334--432},
      ISSN = {0021-7824,1776-3371},
   MRCLASS = {35Q20 (35Q30 35Q35 45K05 47H20 76P05 82C40)},
  MRNUMBER = {4438904},
       DOI = {10.1016/j.matpur.2022.05.009},
       URL = {https://doi.org/10.1016/j.matpur.2022.05.009},
}

@article {GL24,
    AUTHOR = {Gervais, Pierre and Lods, Bertrand},
     TITLE = {Hydrodynamic limits for kinetic equations preserving mass,
              momentum and energy: a spectral and unified approach in the
              presence of a spectral gap},
   JOURNAL = {Ann. H. Lebesgue},
  FJOURNAL = {Annales Henri Lebesgue},
    VOLUME = {7},
      YEAR = {2024},
     PAGES = {969--1098},
      ISSN = {2644-9463},
   MRCLASS = {35Q35 (35Q30 76D05 82B40 82C40 82D05)},
  MRNUMBER = {4799914},
MRREVIEWER = {Taylan\ Sengul},
       DOI = {10.5802/ahl.215},
       URL = {https://doi.org/10.5802/ahl.215},
}

@article {JWZ26,
    AUTHOR = {Jiang, Ning and Wang, Chenchen and Zhou, Kai},
     TITLE = {The incompressible {N}avier-{S}tokes-{F}ourier limits from
              {B}oltzmann-{F}ermi-{D}irac equation for low regularity data},
   JOURNAL = {J. Differential Equations},
  FJOURNAL = {Journal of Differential Equations},
    VOLUME = {467},
      YEAR = {2026},
     PAGES = {Paper No. 114262, 52},
      ISSN = {0022-0396,1090-2732},
   MRCLASS = {35Q30 (35Q20 76P05)},
  MRNUMBER = {5042292},
       DOI = {10.1016/j.jde.2026.114262},
       URL = {https://doi.org/10.1016/j.jde.2026.114262},
}

@article {JXZ22,
    AUTHOR = {Jiang, Ning and Xiong, Linjie and Zhou, Kai},
     TITLE = {The incompressible {N}avier-{S}tokes-{F}ourier limit from
              {B}oltzmann-{F}ermi-{D}irac equation},
   JOURNAL = {J. Differential Equations},
  FJOURNAL = {Journal of Differential Equations},
    VOLUME = {308},
      YEAR = {2022},
     PAGES = {77--129},
      ISSN = {0022-0396,1090-2732},
   MRCLASS = {35Q30 (76P05)},
  MRNUMBER = {4339598},
MRREVIEWER = {Piotr\ Biler},
       DOI = {10.1016/j.jde.2021.10.061},
       URL = {https://doi.org/10.1016/j.jde.2021.10.061},
}

@article {JZ24,
    AUTHOR = {Jiang, Ning and Zhou, Kai},
     TITLE = {The compressible {E}uler and acoustic limits from quantum
              {B}oltzmann equation with {F}ermi-{D}irac statistics},
   JOURNAL = {Comm. Math. Phys.},
  FJOURNAL = {Communications in Mathematical Physics},
    VOLUME = {405},
      YEAR = {2024},
    NUMBER = {2},
     PAGES = {Paper No. 23, 58},
      ISSN = {0010-3616,1432-0916},
   MRCLASS = {35Q20 (35Q31 82C40)},
  MRNUMBER = {4698056},
MRREVIEWER = {Yu-Long\ Zhou},
       DOI = {10.1007/s00220-023-04883-7},
       URL = {https://doi.org/10.1007/s00220-023-04883-7},
}

@article {BGL91,
    AUTHOR = {Bardos, Claude and Golse, Fran\c cois and Levermore, David},
     TITLE = {Fluid dynamic limits of kinetic equations. {I}. {F}ormal
              derivations},
   JOURNAL = {J. Statist. Phys.},
  FJOURNAL = {Journal of Statistical Physics},
    VOLUME = {63},
      YEAR = {1991},
    NUMBER = {1-2},
     PAGES = {323--344},
      ISSN = {0022-4715,1572-9613},
   MRCLASS = {82C40 (82C31)},
  MRNUMBER = {1115587},
MRREVIEWER = {Andrzej\ Fuli\'nski},
       DOI = {10.1007/BF01026608},
       URL = {https://doi.org/10.1007/BF01026608},
}

@article {BGL93,
    AUTHOR = {Bardos, Claude and Golse, Fran\c cois and Levermore, David},
     TITLE = {Fluid dynamic limits of kinetic equations. {II}. {C}onvergence
              proofs for the {B}oltzmann equation},
   JOURNAL = {Comm. Pure Appl. Math.},
  FJOURNAL = {Communications on Pure and Applied Mathematics},
    VOLUME = {46},
      YEAR = {1993},
    NUMBER = {5},
     PAGES = {667--753},
      ISSN = {0010-3640,1097-0312},
   MRCLASS = {82C40 (76A02 76D05 76P05)},
  MRNUMBER = {1213991},
MRREVIEWER = {Andrzej\ Fuli\'nski},
       DOI = {10.1002/cpa.3160460503},
       URL = {https://doi.org/10.1002/cpa.3160460503},
}

@book {SR09,
    AUTHOR = {Saint-Raymond, Laure},
     TITLE = {Hydrodynamic limits of the {B}oltzmann equation},
    SERIES = {Lecture Notes in Mathematics},
    VOLUME = {1971},
 PUBLISHER = {Springer-Verlag, Berlin},
      YEAR = {2009},
     PAGES = {xii+188},
      ISBN = {978-3-540-92846-1},
   MRCLASS = {82C40 (35Q20 76N10 76P05 82-02)},
  MRNUMBER = {2683475},
MRREVIEWER = {Nader\ Masmoudi},
       DOI = {10.1007/978-3-540-92847-8},
       URL = {https://doi.org/10.1007/978-3-540-92847-8},
}

@article {CJ24,
    AUTHOR = {Choi, Young-Pil and Jung, Jinwook},
     TITLE = {Incompressible {N}avier-{S}tokes limit from nonlinear
              {V}lasov-{F}okker-{P}lanck equation},
   JOURNAL = {Appl. Math. Lett.},
  FJOURNAL = {Applied Mathematics Letters. An International Journal of Rapid
              Publication},
    VOLUME = {158},
      YEAR = {2024},
     PAGES = {Paper No. 109214, 7},
      ISSN = {0893-9659,1873-5452},
   MRCLASS = {76N17 (35Q35 76P05)},
  MRNUMBER = {4774079},
       DOI = {10.1016/j.aml.2024.109214},
       URL = {https://doi.org/10.1016/j.aml.2024.109214},
}

@article {CJ26,
    AUTHOR = {Choi, Young-Pil and Jung, Jinwook},
     TITLE = {Incompressible {E}uler limits from a nonlinear
              {V}lasov-{F}okker-{P}lanck equation with constant temperature},
   JOURNAL = {Appl. Math. Lett.},
  FJOURNAL = {Applied Mathematics Letters. An International Journal of Rapid
              Publication},
    VOLUME = {172},
      YEAR = {2026},
     PAGES = {Paper No. 109721, 6},
      ISSN = {0893-9659,1873-5452},
   MRCLASS = {35Q84 (35Q35 76N10 76P05 82C40)},
  MRNUMBER = {4947467},
       DOI = {10.1016/j.aml.2025.109721},
       URL = {https://doi.org/10.1016/j.aml.2025.109721},
}

@article {Rac21,
    AUTHOR = {Rachid, Mohamad},
     TITLE = {Incompressible {N}avier-{S}tokes-{F}ourier limit from the
              {L}andau equation},
   JOURNAL = {Kinet. Relat. Models},
  FJOURNAL = {Kinetic and Related Models},
    VOLUME = {14},
      YEAR = {2021},
    NUMBER = {4},
     PAGES = {599--638},
      ISSN = {1937-5093,1937-5077},
   MRCLASS = {35Q30 (76P05)},
  MRNUMBER = {4296180},
       DOI = {10.3934/krm.2021017},
       URL = {https://doi.org/10.3934/krm.2021017},
}

@article {AL97,
    AUTHOR = {Arlotti, Luisa and Lachowicz, Miroslaw},
     TITLE = {Euler and {N}avier-{S}tokes limits of the
              {U}ehling-{U}hlenbeck quantum kinetic equations},
   JOURNAL = {J. Math. Phys.},
  FJOURNAL = {Journal of Mathematical Physics},
    VOLUME = {38},
      YEAR = {1997},
    NUMBER = {7},
     PAGES = {3571--3588},
      ISSN = {0022-2488,1089-7658},
   MRCLASS = {82C40 (35Q30 76D05)},
  MRNUMBER = {1455570},
       DOI = {10.1063/1.531869},
       URL = {https://doi.org/10.1063/1.531869},
}

@misc{Ger26,
AUTHOR = {Gervais, Pierre},
TITLE = {Incompressible {N}avier-{S}tokes limit of non-bilinear kinetic equations and application to the {BGK}, nonlinear {F}okker-{P}lanck and {B}oltzmann-{F}ermi-{D}irac equations},
YEAR = {arXiv:2607.18939},
DOI = {arXiv:2607.18939},
}

%
%
%
%
%
%
%
%
%
%

\end{document}